\documentclass[12pt,reqno]{amsart}

\usepackage[margin=1.2in]{geometry}
\usepackage{graphicx,latexsym,graphics}
\usepackage{amsfonts, amssymb, amsmath, amsthm}
\usepackage{algorithmic, algorithm}

\theoremstyle{plain}
\newtheorem{theorem}{Theorem}[section]
\newtheorem{proposition}[theorem]{Proposition}
\newtheorem{lemma}[theorem]{Lemma}

\theoremstyle{remark}
\newtheorem{remark}[theorem]{Remark}

\theoremstyle{definition}
\newtheorem{definition}[theorem]{Definition}

\numberwithin{equation}{section}

\allowdisplaybreaks

\newcommand{\NN}{\mathbb N}

\newcommand{\CC}{\mathbb C}
\newcommand{\RR}{\mathbb R}
\newcommand{\ZZ}{\mathbb Z}

\newcommand{\DD}{\mathcal D}
\newcommand{\SSS}{\mathcal S}

\newcommand{\supp}{\operatorname{supp}}

\begin{document}

\title[Wiener Amalgam Spaces of Quasianalytic Ultradistributions]
{Wiener Amalgam Spaces of Quasianalytic Ultradistributions}
\thanks{The research was partially supported by the bilateral project ``Microlocal analysis and applications'' funded by the Macedonian and Serbian academies of sciences and arts.}

\author[P. Dimovski]{Pavel Dimovski}
\address{P. Dimovski, Faculty of Technology and Metallurgy, Ss. Cyril and Methodius University in Skopje, Ruger Boskovic 16, 1000 Skopje, Macedonia}
\email{dimovski.pavel@gmail.com}

\author[B. Prangoski]{Bojan Prangoski}
\address{B. Prangoski, Faculty of Mechanical Engineering, Ss. Cyril and Methodius University in Skopje, Karpos II bb, 1000 Skopje, Macedonia}
\email{bprangoski@yahoo.com}

\subjclass[2010]{Primary 46F05. Secondary 46H25; 46E10; 46F12}
\keywords{Wiener amalgam spaces; modulation spaces; Gelfand-Shilov ultradistributions; translation and modulation invariant Banach spaces, uniformly concentrated partitions of unity, bounded uniform partitions of unity}

\maketitle

\begin{abstract}
We define Wiener amalgam spaces of (quasi)analytic ultradistributions whose local components belong to a general class of translation and modulation invariant Banach spaces of ultradistributions and their global components are either weighted $L^p$ or weighted $\mathcal{C}_0$ spaces. We provide a discrete characterisation via so called uniformly concentrated partitions of unity. Finally, we study the complex interpolation method and we identify the strong duals for most of these Wiener amalgam spaces.
\end{abstract}
\maketitle

\section{Introduction}

The Wiener amalgam spaces (from now, always abbreviated as amalgam spaces) are Banach spaces of tempered distributions, or non-quasianalytic ultradistributions, which measure the global behaviour (usually $L^p$ summability) of the local properties of its elements. In their full generality, they were introduced by Feichtinger in \cite{fe81,feich} (cf. \cite{fei90,FG,fou-ste,he03,felvo}); rudiments of this construction go back to Wiener \cite{wiener,wiener1}. The standard notation is $W(E,C)$, where the local component $E$ is a Banach space of tempered distributions, or, more generally, a Banach space of non-quasianalytic Gelfand-Shilov ultradistributions and the global component $C$ is a Banach space of $L^1_{\operatorname{loc}}$-functions that satisfies additional technical assumptions ($C$ is usually a weighted $L^p$ space). The amalgam space $W(E,C)$ consists of all tempered distributions (or, non-quasianalytic Gelfand-Shilov ultradistributions) $f$ for which the function $x\mapsto\|fT_x\chi\|_E$ belongs to $C$; here, the window $\chi$ is an arbitrary but fixed smooth (or, ultradifferentiable) function with compact support and $T_x$ stands for the operator of translation by $x\in\RR^n$. Nowadays, the amalgam spaces are widely used tool in time-frequency analysis and in the theory of pseudo-differential operators and PDEs; see \cite{acttt,ben-oko,CNT,CR,drw,fei3322,Grochenig,gro-rz,toft1,toft2} and the references therein. We point out that they are closely related to the classical modulation spaces \cite{Grochenig}: denoting by $\mathcal{F}$ the Fourier transform, $W(\mathcal{F}L^p, L^q)$ is the Fourier image of the modulation space $M^{p,q}$  (cf. \cite[Lemma 3.5]{f-p-p}).\\
\indent The goal of this article is to define amalgam spaces of (quasi)analytic ultradistributions, i.e. they will be subspaces of the generalised Gelfand-Shilov spaces of Beurling and Roumieu type $\SSS'^{(M_p)}_{(A_p)}(\RR^n)$ and $\SSS'^{\{M_p\}}_{\{A_p\}}(\RR^n)$, where $\{M_p\}_{p\in\NN}$ and $\{A_p\}_{p\in\NN}$ are two Gevrey type sequences which control the regularity and the decay of the elements of the corresponding test spaces. We require both of them to not grow slower than $\{p!\}_{p\in\NN}$. The local component $E$ will belong to a broad class of Banach spaces of Gelfand-Shilov ultradistributions invariant under translation and modulation \cite{DPPV} (cf. \cite{DPV,lenny}), while the global component will be either a weighted $L^p$ or a weighted $\mathcal{C}_0$ space. We also prove a discrete characterisation of these amalgam spaces via so called \textit{uniformly concentrated partitions of unity} in similar spirit as in \cite{feich}. As we pointed out above, when $\{M_p\}_{p\in\NN}$ is non-quasianalytic, the theory has already been built in \cite{feich}. However, when $\{M_p\}_{p\in\NN}$ is quasianalytic, the classical ideas and techniques are not applicable since they heavily rely on functions with compact support but the test spaces and the local components $E$ may consist entirely of (quasi)analytic functions (even more so, when $M_p=p!$, $p\in\NN$, the Beurling test space $\SSS^{(p!)}_{(A_p)}(\RR^n)$ consists of only entire functions). Because of this, we developed new techniques to overcome this problem. Although we are primarily interested in the (quasi)analytic setting, we do not impose this condition on either one of the sequences $\{M_p\}_{p\in\NN}$ and $\{A_p\}_{p\in\NN}$ so our techniques and results work in the non-quasianalytic setting as well.\\
\indent In the last section, we study the duality and the complex interpolation method of the amalgam spaces.

\section{Preliminaries}\label{notation}

Let $\{M_p\}_{p\in\NN}$ and $\{A_p\}_{p\in\NN}$ be two sequences of positive numbers such that $M_0=M_1=A_0=A_1=1$. Throughout the article, we assume that both sequences satisfy the following conditions:\\
\indent $(M.1)$ $M_{p}^{2} \leq M_{p-1} M_{p+1}$, $p \in\ZZ_+$;\\
\indent $(M.2)$ $M_{p+q} \leq c_0H^{p+q}M_p M_q$, $p,q\in \NN$, for some $c_0,H\geq1$;\\
\indent $(M.6)$ $p!\subset M_p$; i.e., there exist $c_0,L_0\geq1$
such that $p!\leq c_0 L_0^p M_p$, $p\in\NN$.\\
In addition, we assume that $\{A_p\}_{p\in\NN}$ also satisfies the condition\\
\indent $(M.2)^*$ $2m_p\leq m_{pN}$, $p\geq p_0$, for some $p_0,N\in\ZZ_+$, where $m_j:=A_j/A_{j-1}$, $j\in\ZZ_+$.\\
Without loss of generality, we can assume that the constants $c_0$ and $H$ for $\{M_p\}_{p\in\NN}$ and $\{A_p\}_{p\in\NN}$ are the same. It can be shown (see \cite[Proposition 1.1]{Petzsche88}) that $(M.2)^*$ is equivalent to the condition $(M.5)$ employed in \cite{ppv}: there is $q>0$ such that $\{A^q_p\}_{p\in\NN}$ is strongly non-quasianalytic (i.e., $\{A^q_p\}_{p\in\NN}$ satisfies the condition $(M.3)$ of Komatsu \cite{Komatsu1}). The Gevrey sequences $\{p!^{\sigma}\}_{p\in\NN}$, $\sigma\geq 1$, satisfy all of the above conditions. For $\alpha\in\NN^n$, we write $M_{\alpha}=M_{|\alpha|}$. We denote by $M(\cdot)$ and $A(\cdot)$ the associated functions to $\{M_p\}_{p\in\NN}$ and $\{A_p\}_{p\in\NN}$, namely \cite{Komatsu1}: $M(\rho)=\sup_{p\in\NN}\ln(\rho^p/M_p)$, $\rho\geq 0$, and $A(\rho)$ is defined likewise. They are non-negative, continuous, non-decreasing functions that vanish for sufficiently small $\rho$ and grow more rapidly than $\ln \rho^k$ for any $k\in\ZZ_+$ as $\rho\to\infty$ (see \cite{Komatsu1}). When $M_p=p!^{\sigma}$, $\sigma\geq1$, $M(\rho)\asymp \rho^{1/\sigma}$. Throughout the rest of the article, we will frequently tacitly apply the following inequalities
$$
e^{M(\rho+\lambda)}\leq 2e^{M(2\rho)}e^{M(2\lambda)},\,\, \rho,\lambda\geq 0,\quad \mbox{and}\quad e^{2M(\rho)}\leq c_0e^{M(H\rho)},\,\, \rho\geq 0,
$$
(see \cite[Proposition 3.6]{Komatsu1} for a proof of the second) and the analogous ones for $A(\cdot)$.\\
\indent Given $h>0$, we denote by $\SSS^{M_p,h}_{A_p,h}(\RR^n)$ the Banach space of all $\varphi \in \mathcal{C}^{\infty}(\RR^n)$ such that $\sup_{\alpha\in\NN^n} h^{|\alpha|}\|e^{A(h|\cdot|)} \partial^{\alpha}\varphi\|_{L^{\infty}(\RR^n)}/M_{\alpha}<\infty$, and define the locally convex spaces (from now on, abbreviated as l.c.s.)
$$
\SSS^{(M_p)}_{(A_p)}(\RR^n)=\lim_{\substack{\longleftarrow\\ h\rightarrow \infty}} \SSS^{M_p,h}_{A_p,h}(\RR^n) \quad\mbox{and}\quad
\SSS^{\{M_p\}}_{\{A_p\}}(\RR^n)=\lim_{\substack{\longrightarrow\\ h\rightarrow 0^{+}}} \SSS^{M_p,h}_{A_p,h}(\RR^n).
$$
The space $\SSS^{(M_p)}_{(A_p)}(\RR^n)$ is a nuclear Fr\'echet space, while $\SSS^{\{M_p\}}_{\{A_p\}}(\RR^n)$ is a nuclear $(DFS)$-space; furthermore, they satisfy the natural analogue to the Schwartz kernel theorem (see \cite{and-jasson-lenny,ppv}). All of these facts hold equally well even if $\{A_p\}_{p\in\NN}$ does not satisfy $(M.2)^*$. Their strong duals are the spaces of tempered ultradistributions of Beurling and Roumieu type respectively. We employ $\SSS^*_{\dagger}(\RR^n)$ and $\SSS'^*_{\dagger}(\RR^n)$ as a common notation for these spaces.  Throughout the rest of the article, if a separate treatment is needed, we first state assertions for the Beurling cases of $*$ and $\dagger$, followed by the Roumieu cases in parenthesis.\\
\indent We fix the constants in the Fourier transform as $\mathcal{F}f(\xi)=\int_{\RR^n} e^{-2\pi ix \xi}f(x)dx$, $f\in L^1(\RR^n)$. The Fourier transform is a topological isomorphism from $\SSS^*_{\dagger}(\RR^n)$ and $\SSS'^*_{\dagger}(\RR^n)$ onto $\SSS^{\dagger}_*(\RR^n)$ and $\SSS'^{\dagger}_*(\RR^n)$ respectively. Given a Banach space $X\subseteq \SSS'^*_{\dagger}(\RR^n)$, we define its associated Fourier space as the Banach space $\mathcal{F}X\subseteq \SSS'^{\dagger}_{*}(\RR^n)$ with norm $\|\mathcal{F}f\|_{\mathcal{F}X}=\|f\|_X$. We denote by $T_x$, $x\in\RR^n$, and $M_{\xi}$, $\xi\in\RR^n$, the operators of translation and modulation: $T_xf=f(\cdot-x)$, $M_{\xi}f=e^{2\pi i \xi\,\cdot}\,f$. Furthermore, we write $\check{f}(x)=f(-x)$ for reflection about the origin.\\
\indent The proofs of our main results heavily rely on the following multiplicative factorisation of $\SSS^*_{\dagger}(\RR^n)$ \cite{abj} (this is the reason why we impose the condition $(M.2)^*$ on $\{A_p\}_{p\in\NN}$).

\begin{proposition}[{\cite[Corollary 7.10]{abj}}]\label{facto-s-mul}
Let $B$ be a bounded subset of $\SSS^*_{\dagger}(\RR^n)$. There exist $\varphi\in\SSS^*_{\dagger}(\RR^n)$ and a bounded subset $B_1$ of $\SSS^*_{\dagger}(\RR^n)$ such that $B=\varphi B_1$.
\end{proposition}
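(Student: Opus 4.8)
The plan is to deduce this multiplicative factorisation from the parametrix-type results for ultradifferential operators acting on $\SSS^*_{\dagger}(\RR^n)$, which is precisely where the condition $(M.2)^*$ (equivalently, strong non-quasianalyticity of some power of $\{A_p\}_{p\in\NN}$) enters. First I would recall that, by the structure of bounded sets in the nuclear (DFS) space $\SSS^{\{M_p\}}_{\{A_p\}}(\RR^n)$ (resp. the Fréchet space $\SSS^{(M_p)}_{(A_p)}(\RR^n)$), any bounded set $B$ is contained in a set of the form $\{\psi\in\SSS^*_\dagger(\RR^n): \sup_\alpha h^{|\alpha|}\|e^{A(h|\cdot|)}\partial^\alpha\psi\|_{L^\infty}/M_\alpha\le C\}$ for suitable $h,C>0$ (for the Roumieu case, $h$ small depending on $B$; for the Beurling case, $h$ arbitrarily large). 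So it suffices to produce a single $\varphi$ and a bounded reservoir $B_1$ absorbing all such $\psi=\varphi\psi_1$ with $\psi_1\in B_1$.

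The key step is to choose $\varphi$ so that $1/\varphi$ is, in a suitable quantitative sense, a symbol of an ultradifferential-type growth matching $e^{-A(h|\cdot|)}$ times a factor killing derivative growth. Concretely, I would take $\varphi$ of the form $\varphi=P(D)\theta$ or directly an explicit decaying analytic function, e.g. built from an infinite product $\prod_j(1+|x|^2/r_j^2)^{-1}$ with $r_j$ chosen from the sequence $\{M_p\}$ so that $\varphi(x)$ decays like $e^{-A^\ast(|x|)}$ for an associated function of a slightly smaller sequence, while $1/\varphi$ and all its derivatives grow no faster than $C e^{A(h'|x|)}$ with controlled derivative bounds. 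Then for $\psi\in B$ one sets $\psi_1:=\psi/\varphi$ and must check $\psi_1\in\SSS^*_\dagger(\RR^n)$ with norm bounded uniformly in $\psi\in B$: this is a Leibniz-rule estimate, distributing $\partial^\alpha$ over the product $\psi_1=\psi\cdot(1/\varphi)$, using $(M.2)$ to combine the $M_p$-factors and the submultiplicativity-type inequalities for $A(\cdot)$ quoted in the Preliminaries to combine $e^{A(h|\cdot|)}\cdot e^{A(h'|\cdot|)}\le C e^{A(H'|\cdot|)}$. The gain of regularity and decay — i.e. that one can afford $\varphi$ decaying slower than the worst element of $B$ and still land back in the same space — is exactly what strong non-quasianalyticity of a power of $\{A_p\}$ buys, via the equivalence recorded after $(M.2)^*$.

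The main obstacle is the quasianalytic regime: when $\{M_p\}$ is quasianalytic one cannot use cutoff functions, so $\varphi$ must be constructed globally (e.g. as an infinite product or via a convergent series of ultradifferential operators applied to a fixed bump-substitute), and one must simultaneously control (i) the decay of $\varphi$, (ii) the growth of $1/\varphi$ and all its derivatives, and (iii) the fact that dividing by $\varphi$ cannot be allowed to spoil membership — hence $\varphi$ must be nowhere vanishing, which rules out polynomial-type symbols and forces the product/exponential construction. Balancing (i) against (ii) is the delicate quantitative heart of the argument, and it is governed precisely by the relationship between $A(\cdot)$ and the associated function of the strongly non-quasianalytic power of $\{A_p\}$. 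Once $\varphi$ with these properties is fixed, the verification that $B_1:=\{\psi/\varphi:\psi\in B\}$ is bounded in $\SSS^*_\dagger(\RR^n)$ and that $B=\varphi B_1$ is then the routine Leibniz estimate sketched above. Since the statement is quoted as \cite[Corollary 7.10]{abj}, I would ultimately just invoke that reference, but the above is the route by which it is established there.
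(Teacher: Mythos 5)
The paper contains no internal proof of this proposition: it is imported as stated from \cite[Corollary 7.10]{abj}, so your bottom line---simply invoking that reference---is exactly what the paper does, and for the purposes of this article that suffices.

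What I would flag is that the sketch you offer as ``the route by which it is established there'' is not the argument of \cite{abj}, and it would not stand on its own. The result in \cite{abj} is a Dixmier--Malliavin/Voigt-type factorization theorem for Denjoy--Carleman vectors of representations of $(\RR^d,+)$; the multiplicative statement for $\SSS^*_{\dagger}(\RR^n)$ is obtained by applying that abstract factorization to the modulation action (translation on the Fourier side), not by dividing every element of $B$ by one explicitly constructed nowhere-vanishing window. Taken as an independent argument, your division scheme leaves the genuinely hard points untouched. Since you set $\psi_1:=\psi/\varphi$, the window must be nowhere vanishing with a two-sided, radially comparable lower bound, and in the Beurling case no window on a fixed decay scale can work: a bounded set of $\SSS^{(M_p)}_{(A_p)}(\RR^n)$ only satisfies $\sup_{\psi\in B}|\psi(x)|\leq C_h e^{-A(h|x|)}$ for \emph{every} $h$, so $|\varphi|$ must be squeezed, for all $k$ and $h$, between $e^{A(k|\cdot|)}\sup_{\psi\in B}|\psi|$ and $e^{-A(h|\cdot|)}$; hence $\varphi$ has to be tailored to $B$, and realizing such an intermediate decay by an actual element of $\SSS^{(M_p)}_{(A_p)}(\RR^n)$, with $M_\alpha$-derivative bounds for $\varphi$ \emph{and} with $\partial^{\alpha}(1/\varphi)$ controlled by $M_\alpha$ and $e^{A(\tau|\cdot|)}$ growth, is precisely the construction you do not supply---the Leibniz step you call routine is routine only after this. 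The concrete recipe is also miscalibrated: an infinite product with zeros placed according to $\{M_p\}$ produces growth/decay on the scale $e^{\pm M(|\cdot|)}$, whereas matching the weight $e^{A(h|\cdot|)}$ requires placing them according to the ratios $A_j/A_{j-1}$ (the sequence $\{M_p\}$ only governs derivative growth); and the quantitative role of $(M.2)^*$ is asserted rather than used. So citing \cite{abj} is fine and matches the paper, but the accompanying sketch should not be presented as a proof, nor as a description of the proof given there.
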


\indent Given two l.c.s. $X$ and $Y$, we denote by $\mathcal{L}(X,Y)$ the space of continuous linear operators from $X$ into $Y$; $\mathcal{L}_b(X,Y)$ stands for this space equipped with the topology of uniform convergence on all bounded subsets of $X$. When $X=Y$, we abbreviate these notations as $\mathcal{L}(X)$ and $\mathcal{L}_b(X)$ respectively. The notation $X\hookrightarrow Y$ means that $X$ is continuously and densely included into $Y$. Unless otherwise stated, the dual of any l.c.s. $X$ will always carry the strong dual topology; we will sometimes write $X'_b$ when we want to emphasise this.\\
\indent A positive measurable function $\eta:\mathbb{R}^{n}\to (0,\infty)$ is said to be an \textit{ultrapolynomially bounded weight of class $\dagger$}, or simply, a \textit{weight of class $\dagger$}, if there are $C,\tau>0$ (resp., for every $\tau>0$ there is $C>0$) such that $\eta(x+y)\leq C\eta(x)e^{A(\tau|y|)}$, $x,y\in\RR^n$. Given such weight $\eta$, there always exists a continuous weight $\tilde{\eta}$ of class $\dagger$ which is equivalent to $\eta$, i.e. there is $\tilde{C}\geq 1$ such that $\tilde{C}^{-1}\tilde{\eta}(x)\leq \eta(x)\leq \tilde{C}\tilde{\eta}(x)$, $x\in\RR^n$. The definition of an ultrapolynomially bounded weight of class $*$ is analogous (just replace $A(\cdot)$ with $M(\cdot)$). Given a weight $\eta$ of class $\dagger$, we denote by $L^p_{\eta}(\RR^n)$, $1\leq p\leq \infty$, the weighted $L^p$ space of measurable functions $f$ such that $\|f\|_{L^{p}_{\eta}(\RR^n)}:=\|\eta f\|_{L^p(\RR^n)}<\infty$. We also consider the closed subspace $L^{\infty}_{\eta,0}(\RR^n)$ of $L^{\infty}_{\eta}(\RR^n)$ consisting of all $f\in L^{\infty}_{\eta}(\RR^n)$ which additionally satisfy the following property: for every $\varepsilon>0$ there exists a compact set $K\subseteq \RR^n$ such that $\|f\eta\|_{L^{\infty}(\RR^n\backslash K)}\leq \varepsilon$. We denote by $\mathcal{C}_{\eta,0}(\RR^n)$ the space $L^{\infty}_{\eta,0}(\RR^n)\cap \mathcal{C}(\RR^n)$; it is a closed subspace of $L^{\infty}_{\eta,0}(\RR^n)$ and we equip it with the induced norm from $\|\cdot\|_{L^{\infty}_{\eta}}$. When $\eta=1$, we employ the notations $L^{\infty}_0(\RR^n)$ and $\mathcal{C}_0(\RR^n)$ instead of $L^{\infty}_{\eta,0}(\RR^n)$ and $\mathcal{C}_{\eta,0}(\RR^n)$.\\
\indent For a measurable set $K\subseteq \RR^n$, we denote by $\mathbf{1}_K$ the characteristic function of $K$. Similarly, given a countable index set $\Lambda$ and $\tilde{\Lambda}\subseteq \Lambda$, we denote by $\mathbf{1}_{\tilde{\Lambda}}\in\CC^{\Lambda}$ the sequence which is equal to $1$ on $\tilde{\Lambda}$ and $0$ otherwise.\\
\indent Let $h>0$. We denote by $\DD^{M_p,h}_{L^p}(\RR^n)$, $1\leq p\leq \infty$, the Banach space of all $\varphi\in\mathcal{C}^{\infty}(\RR^n)$ which satisfy
$$
\|\varphi\|_{\DD^{M_p,h}_{L^p}}:= \sup_{\alpha\in\NN^n}h^{|\alpha|} \|\partial^{\alpha}\varphi\|_{L^p(\RR^n)}/M_{\alpha}<\infty,
$$
and define the l.c.s.
$$
\DD^{(M_p)}_{L^p}(\RR^n)=\lim_{\substack{\longleftarrow\\ h\rightarrow \infty}} \DD^{M_p,h}_{L^p}(\RR^n)\quad \mbox{and}\quad \DD^{\{M_p\}}_{L^p}(\RR^n)= \lim_{\substack{\longrightarrow \\ h\rightarrow 0^+}} \DD^{M_p,h}_{L^p}(\RR^n).
$$
We use $\DD^*_{L^p}(\RR^n)$ as a common notation for these spaces. The space $\DD^{(M_p)}_{L^p}(\RR^n)$ is a Fr\'echet space, while $\DD^{\{M_p\}}_{L^p}(\RR^n)$ is a complete regular barrelled $(DF)$-space (see, \cite[Section 4.3]{TIBU}). Furthermore, $\SSS^*_{\dagger}(\RR^n)\subseteq \DD^*_{L^p}(\RR^n)$, $1\leq p\leq\infty$, continuously. Throughout the article, we will frequently employ the following result.

\begin{lemma}\label{lemma-for-inc-dl1-f}
Let $\eta$ be a weight of class $*$. Then $\DD^*_{L^1}(\RR^n)\hookrightarrow \mathcal{F}L^1_{\eta}$.
\end{lemma}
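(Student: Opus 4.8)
The plan is to prove separately the continuity and the density of the inclusion; fix from the start a continuous representative of $\eta$. By the very definition of $\mathcal{F}L^1_\eta$, the map $\mathcal{F}^{-1}\colon\mathcal{F}L^1_\eta\to L^1_\eta$ is an isometric isomorphism, so continuity of $\DD^*_{L^1}(\RR^n)\hookrightarrow\mathcal{F}L^1_\eta$ amounts to the estimate $\|\mathcal{F}^{-1}\varphi\|_{L^1_\eta}\leq C\|\varphi\|_{\DD^{M_p,h}_{L^1}}$ on a suitable Banach step $\DD^{M_p,h}_{L^1}(\RR^n)$ with $C$ independent of $\varphi$. The first step is a pointwise decay bound on $\mathcal{F}^{-1}\varphi$. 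Since $\varphi\in\DD^{M_p,h}_{L^1}(\RR^n)\subseteq L^1(\RR^n)$, the function $\mathcal{F}^{-1}\varphi$ is continuous and bounded, and the distributional identity $x^\alpha\mathcal{F}^{-1}\varphi=(-2\pi i)^{-|\alpha|}\mathcal{F}^{-1}(\partial^\alpha\varphi)$ together with $\|\mathcal{F}^{-1}(\partial^\alpha\varphi)\|_{L^\infty}\leq\|\partial^\alpha\varphi\|_{L^1}\leq M_{|\alpha|}h^{-|\alpha|}\|\varphi\|_{\DD^{M_p,h}_{L^1}}$ yields $|x^\alpha\mathcal{F}^{-1}\varphi(x)|\leq(2\pi h)^{-|\alpha|}M_{|\alpha|}\|\varphi\|_{\DD^{M_p,h}_{L^1}}$ for all $\alpha\in\NN^n$ and all $x\in\RR^n$. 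Restricting to $\alpha=pe_j$, where $j$ is chosen so that $|x_j|=\max_k|x_k|\geq|x|/\sqrt n$, and taking the infimum over $p\in\NN$, the definition of the associated function $M(\cdot)$ gives
\[
|\mathcal{F}^{-1}\varphi(x)|\leq\|\varphi\|_{\DD^{M_p,h}_{L^1}}\,e^{-M(ch|x|)},\qquad x\in\RR^n,\quad c:=2\pi/\sqrt n.
\]

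Next I balance this decay against the admissible growth of $\eta$. Putting $x=0$ in the defining inequality of a weight of class $*$ gives $\eta(x)\leq C_0 e^{M(\tau|x|)}$ for some fixed $\tau>0$ in the Beurling case, and for every $\tau>0$ (with $C_0=C_0(\tau)$) in the Roumieu case. In the Beurling case $\varphi\in\DD^{(M_p)}_{L^1}(\RR^n)$ lies in every step, so I pick $h$ with $ch\geq H\tau$; in the Roumieu case $\varphi$ lies in one fixed step $\DD^{M_p,h}_{L^1}(\RR^n)$ and I instead pick $\tau:=ch/H$. In both cases the inequality $e^{2M(\rho)}\leq c_0 e^{M(H\rho)}$ yields $M(ch|x|)\geq M(H\tau|x|)\geq 2M(\tau|x|)-\ln c_0$, hence
\[
\eta(x)\,|\mathcal{F}^{-1}\varphi(x)|\leq C_0 c_0\,\|\varphi\|_{\DD^{M_p,h}_{L^1}}\,e^{-M(\tau|x|)},
\]
and since $M(\rho)$ grows faster than $\ln\rho^{\,n+1}$, the right-hand side is integrable; therefore $\|\mathcal{F}^{-1}\varphi\|_{L^1_\eta}\leq C\|\varphi\|_{\DD^{M_p,h}_{L^1}}$. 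In the Beurling (Fr\'echet) case this is exactly continuity of the inclusion; in the Roumieu case it gives continuity of $\DD^{M_p,h}_{L^1}(\RR^n)\to\mathcal{F}L^1_\eta$ for every $h>0$, hence continuity of the inclusion from the inductive limit $\DD^{\{M_p\}}_{L^1}(\RR^n)$. That the map in question really is the natural inclusion is clear, as $\DD^*_{L^1}(\RR^n)$ and $\mathcal{F}L^1_\eta$ both sit inside $\SSS'^*_{\dagger}(\RR^n)$.

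For the density, transporting through the isometry $\mathcal{F}^{-1}\colon\mathcal{F}L^1_\eta\to L^1_\eta$ reduces the claim to showing that $\SSS^{\dagger}_*(\RR^n)$ is dense in $L^1_\eta(\RR^n)$, because $\SSS^*_{\dagger}(\RR^n)\subseteq\DD^*_{L^1}(\RR^n)$ and $\mathcal{F}^{-1}\SSS^*_{\dagger}(\RR^n)=\SSS^{\dagger}_*(\RR^n)$ (the latter using that reflection preserves these spaces). Given $f\in L^1_\eta$, the truncations $f\mathbf{1}_{\{|x|\leq R\}}$ converge to $f$ in $L^1_\eta$ by dominated convergence, so one may assume $f$ bounded with compact support. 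Fixing $0\neq\psi\in\SSS^{\dagger}_*(\RR^n)$ and setting $\rho:=|\psi|^2/\|\psi\|_{L^2}^2\in\SSS^{\dagger}_*(\RR^n)$ (a Gelfand--Shilov space is closed under products), the mollifications $\rho_\varepsilon*f$, with $\rho_\varepsilon:=\varepsilon^{-n}\rho(\cdot/\varepsilon)$, belong to $\SSS^{\dagger}_*(\RR^n)$ — the convolution of a compactly supported $L^1$-function with a Gelfand--Shilov function stays in the space, which is routine once the weight shift is absorbed via $e^{M(\rho+\lambda)}\leq 2e^{M(2\rho)}e^{M(2\lambda)}$ — and $\rho_\varepsilon*f\to f$ in $L^1_\eta$ as $\varepsilon\to0^+$. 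This last convergence follows, by the standard vector-valued averaging argument, from the strong continuity of translation on $L^1_\eta$, which in turn reduces to the $L^1$-case through the isometry $g\mapsto\eta g$, using that $\sup_x\eta(x)/\eta(x-y)\leq Ce^{M(\tau|y|)}\to1$ as $y\to0$ and that $\eta$ is continuous. This establishes density, and the lemma follows.

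I expect the numerical estimates to be routine; the one point that genuinely needs care is the Beurling/Roumieu asymmetry in the balancing step — the room to absorb the growth of $\eta$ comes from letting $h\to\infty$ in the Beurling case but from letting $\tau\to0$ in the Roumieu case — so although the final inequality is the same, the two cases must be run with mirrored parameter choices. The density part is standard once one has the strong continuity of translation on $L^1_\eta$.
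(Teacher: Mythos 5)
Your proposal is correct. The continuity half is essentially the paper's argument: the paper also derives a pointwise bound $|\mathcal{F}^{-1}\varphi(\xi)|\leq \|\varphi\|_{\DD^{M_p,h}_{L^1}}e^{-M(h|\xi|/\sqrt{n})}$ from $|\xi^{\alpha}||\mathcal{F}^{-1}\varphi(\xi)|\leq\|\partial^{\alpha}\varphi\|_{L^1}$ (it sums $\xi^{2\alpha}$ over $|\alpha|=k$ via the multinomial theorem where you restrict to the monomials $x_j^{p}$ in the largest coordinate — same effect, same $\sqrt n$ loss), and then concludes integrability of $\eta(\xi)e^{-M(h|\xi|/\sqrt n)}$ exactly through the mirrored parameter choices you describe ($h$ large in the Beurling case, $\tau$ small relative to the fixed step $h$ in the Roumieu case), finishing by the projective/inductive limit structure. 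Where you genuinely diverge is the density: the paper disposes of it in one sentence by invoking the density of $\SSS^*_{\dagger}(\RR^n)$ in $\mathcal{F}L^1_{\eta}$, which is part of the TMIB framework recalled in Section 2 ($\mathcal{F}L^1_{\eta}$ is a TMIB space, and condition (I) includes the dense embedding of the test space), whereas you reprove this fact from scratch on the $L^1_{\eta}$ side by truncation and mollification with the Gelfand--Shilov kernel $|\psi|^2/\|\psi\|_{L^2}^2$. Your route is longer but self-contained and does not presuppose the TMIB machinery; the paper's route is shorter because that machinery is already in place.

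One small imprecision in your density argument: the claim that $\sup_x\eta(x)/\eta(x-y)\leq Ce^{M(\tau|y|)}\to 1$ as $y\to 0$ is not quite right, since the constant $C$ need not be $1$; what you actually have, and all you need, is a uniform bound on $\|T_y\|_{\mathcal{L}(L^1_{\eta})}$ for $|y|\leq 1$ together with $\|T_y\varphi-\varphi\|_{L^1_{\eta}}\to 0$ for $\varphi$ continuous with compact support (dense in $L^1_{\eta}$ since $\eta$ is continuous and positive), which yields strong continuity of translations by the usual three-term splitting. This is a phrasing issue, not a gap.
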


\begin{proof} Once we show the continuous inclusion $\DD^*_{L^1}(\RR^n)\subseteq \mathcal{F}L^1_{\eta}$, the density will follow from the density of $\SSS^*_{\dagger}(\RR^n)$ in $\mathcal{F}L^1_{\eta}$. We give the proof only in the Roumieu case, as the Beurling case is similar. Let $h>0$ and $\varphi\in\DD^{M_p,h}_{L^1}(\RR^n)$. Since $|\xi^{\alpha}||\mathcal{F}^{-1}\varphi(\xi)|\leq \|\partial^{\alpha}\varphi\|_{L^1(\RR^n)}$, $\xi\in\RR^n$, $\alpha\in\NN^n$, we have
$$
|\xi|^{2k}|\mathcal{F}^{-1}\varphi(\xi)|^2= \sum_{|\alpha|=k} \frac{k!}{\alpha!}\xi^{2\alpha}|\mathcal{F}^{-1}\varphi(\xi)|^2\leq h^{-2k}M_k^2\|\varphi\|^2_{\DD^{M_p,h}_{L^1}}\sum_{|\alpha|= k} \frac{k!}{\alpha!};
$$
hence $(n^{-1/2}h)^k |\xi|^k|\mathcal{F}^{-1}\varphi(\xi)|/M_k\leq \|\varphi\|_{\DD^{M_p,h}_{L^1}}$, $\xi\in\RR^n$, $k\in\NN$. Consequently,
$$
\eta(\xi)|\mathcal{F}^{-1}\varphi(\xi)|\leq \|\varphi\|_{\DD^{M_p,h}_{L^1}} \eta(\xi)e^{-M(h|\xi|/\sqrt{n})},\quad \xi\in\RR^n.
$$
Since the function on the right is in $L^1(\RR^n)$, this shows that $\DD^{M_p,h}_{L^1}(\RR^n)\subseteq \mathcal{F}L^1_{\eta}$ continuously, and, as $h>0$ is arbitrary, the proof is complete.
\end{proof}

\subsection{Translation-Modulation Invariant Banach Spaces of Ultradistributions and Their Duals}

A Banach space $E$ is said to be a \textit{translation-modulation invariant Banach space of ultradistributions of class $*-\dagger$ on $\RR^n$} (or, a TMIB space of class $*-\dagger$ for short) if it satisfies the following three conditions \cite{DPPV}:
\begin{itemize}
    \item[(I)] $\mathcal{S}^*_{\dagger}(\mathbb{R}^n)\hookrightarrow E\hookrightarrow \mathcal{S}'^*_{\dagger}(\RR^n)$;
    \item[(II)] $T_{x}(E)\subseteq E$ and $M_\xi (E)\subseteq E$, for all $x,\xi\in\mathbb{R}^n$;
\item [(III)] there are $\tau,C>0$ (for every $\tau>0$ there exists $C>0$) such that\footnote{$T_x$ and $M_{\xi}$ are continuous on $E$ in view of (I), (II) and the closed graph theorem.}
    \begin{equation}\label{omega}
   \omega_{E}(x):= \|T_{x}\|_{\mathcal{L}_b(E)}\leq C e^{A(\tau|x|)} \quad  \mbox{and} \quad \nu_{E}(\xi) := \|M_{-\xi}\|_{\mathcal{L}_b(E)}\leq C e^{M(\tau|\xi|)}.
   \end{equation}
\end{itemize}
The functions $\omega_{E},\nu_E:\mathbb{R}^{n}\to (0,\infty)$ are called the \textit{weight functions of the translation and modulation groups} of $E$ respectively. We briefly recall the basic properties of TMIB spaces and refer to \cite[Section 3]{DPPV} for the complete account. Every TMIB space $E$ is separable and its weight functions $\omega_E$ and $\nu_E$ are submultiplicative Borel measurable weights of class $\dagger$ and $*$ respectively; furthermore $\omega_E(0)=\nu_E(0)=1$. For each $e\in E$, the maps $\RR^n\rightarrow E$, $x\rightarrow T_xe$ and $x\mapsto M_xe$, are continuous. The convolution and multiplication on $\SSS^*_{\dagger}(\RR^n)$ uniquely extend to continuous bilinear mappings $*: L^1_{\omega_E}\times E\rightarrow E$ and $\cdot:\mathcal{F}L^1_{\nu_E}\times E\rightarrow E$ which satisfy
\begin{equation}\label{con-mul-module-aaa}
\|f*e\|_E\leq \|f\|_{L^1_{\omega_E}}\|e\|_E\quad\mbox{and}\quad \|g\cdot e\|_E\leq \|g\|_{\mathcal{F}L^1_{\nu_E}}\|e\|_E,
\end{equation}
for all $e\in E$, $f\in L^1_{\omega_E}$, $g\in\mathcal{F}L^1_{\nu_E}$. In particular, $E$ is a Banach convolution module over the Beurling convolution algebra $L^1_{\omega_E}$ and a Banach multiplication module over the Fourier-Beurling multiplication algebra $\mathcal{F}L^1_{\nu_E}$; the multiplication in $\mathcal{F}L^1_{\nu_E}$ is defined via the Fourier transform and the convolution in $L^1_{\nu_E}$ (notice that when $\nu_E$ is bounded from below, $\mathcal{F}L^1_{\nu_E}\subseteq \mathcal{C}_0(\RR^n)$ and the multiplication coincides with the ordinary pointwise multiplication of continuous functions). Furthermore, for $e$, $f$ and $g$ as above
\begin{equation}\label{integ-form-for-con-mult}
f*e=\int_{\RR^n}f(x)T_xe\,dx\quad \mbox{and}\quad g\cdot e=\int_{\RR^n}(\mathcal{F}^{-1}g)(\xi)M_{-\xi}e\,d\xi,
\end{equation}
where the integrals should be interpreted as $E$-valued Bochner integrals. Finally, we point out that the associated Fourier space $\mathcal{F}E$ is a TMIB of class $\dagger-*$. If $\eta$ is a weight of class $\dagger$, $L^p_{\eta}(\RR^n)$, $1\leq p<\infty$, and $\mathcal{C}_{\eta,0}(\RR^n)$ are typical examples of TMIB spaces of class $*-\dagger$ and their associated Fourier spaces are TMIB spaces of class $\dagger-*$.\\
\indent A Banach space $E$ is said to be a \textit{dual translation-modulation invariant Banach space of ultradistributions of class $*-\dagger$ on $\RR^n$} (or, a DTMIB space of class $*-\dagger$ for short) if it is the strong dual of a TMIB space of class $*-\dagger$. If $E$ is a DTMIB space of class $*-\dagger$, then we always have the continuous inclusions $\SSS^*_{\dagger}(\RR^n)\subseteq E\subseteq \SSS'^*_{\dagger}(\RR^n)$, but $\SSS^*_{\dagger}(\RR^n)$ may fail to be dense in $E$ (for example, when $E=L^{\infty}(\RR^n)$). The translation and modulation are continuous operators on $E$, its weight functions $\omega_E$ and $\nu_E$ are defined as in \eqref{omega} and, writing $E=E'_0$ with $E_0$ a TMIB space, it holds that $\omega_E=\check{\omega}_{E_0}$ and $\nu_E=\nu_{E_0}$. For $e\in E$, the maps $\RR^n\rightarrow E$, $x\rightarrow T_xe$ and $x\mapsto M_xe$, are continuous when $E$ is equipped with the weak-$*$ topology. The space $E$ is a Banach convolution module over the Beurling algebra $L^1_{\omega_E}$ and a Banach multiplication module over the Fourier-Beurling algebra $\mathcal{F}L^1_{\nu_E}$, where the convolution and multiplication are defined by duality. Furthermore, \eqref{con-mul-module-aaa} and \eqref{integ-form-for-con-mult} hold true but the integrals in the latter identities should now be interpreted as $E$-valued Pettis integrals with respect to the weak-$*$ topology on $E$. Examples of DTMIB space of class $*-\dagger$ are given by $L^p_{\eta}(\RR^n)$, $1< p\leq\infty$, when $\eta$ is a weight of class $\dagger$; of course their Fourier images are DTMIB spaces of class $\dagger-*$. We point out that all of these facts hold true even if $\{A_p\}_{p\in\NN}$ does not satisfy $(M.2)^*$.\\
\indent One can also consider (D)TMIB spaces of distributions \cite{DPPV} by replacing $\SSS^*_{\dagger}(\RR^n)$ and $\SSS'^*_{\dagger}(\RR^n)$ with $\SSS(\RR^n)$ and $\SSS'(\RR^n)$ respectively and replacing the (sub)exponential bounds in \eqref{omega} with polynomial bounds. All the facts we mentioned above hold true in this case as well (of course, the weight $\eta$ in $L^p_{\eta}(\RR^n)$ and $\mathcal{C}_{\eta,0}(\RR^n)$ now is polynomially bounded).\\
\indent Finally, we point out that if $\{M_p\}_{p\in\NN}$ is non-quasianalytic, the (D)TMIB spaces of class $*-\dagger$ can be viewed as Banach spaces of ultradistributions having two module structures in the sense of \cite{brfe83} (cf. \cite{fei84}). However, when $\{M_p\}_{p\in\NN}$ is (quasi)analytic this is no longer the case, since in this case a (D)TMIB space of class $*-\dagger$ may contain only (quasi)analytic functions (for example $\mathcal{F}L^1_{\exp(k|\cdot|)}$, $k>0$, which is a TMIB space of class $(p!)-(p!)$).

\section{Amalgam spaces}\label{amalgam sect}

\subsection{Uniformly concentrated partitions of unity}

The key ingredient in the discrete characterisation of the amalgam spaces that we are going to show later in the article are partitions of unity with functions in $\SSS^*_{\dagger}(\RR^n)$ that have uniform bounds on their growth. They will also be useful in the proofs of several auxiliary results.

\begin{definition}\label{def-bupuultr-for-aml}
Let $\Lambda$ be a countable index set. We say that the family of functions $\Psi=\{\psi_{\lambda}\}_{\lambda\in\Lambda}\subseteq \SSS^*_{\dagger}(\RR^n)$ is a \textit{uniformly concentrated partition of unity of class $*-\dagger$} (from now, abbreviated as UCPU of class $*-\dagger$) if there is a sequence of points $\{y_{\lambda}\}_{\lambda\in\Lambda}\subseteq \RR^n$ such that:
\begin{itemize}
      \item[$(1)$] for every $h>0$ (resp., for some $h>0$)
    \begin{equation}
        \sup_{\lambda\in\Lambda}\sup_{\alpha\in\NN^n}\sup_{x\in\mathbb{R}^n} \frac{h^\alpha |\partial^{\alpha} \psi_{\lambda}(x) |e^{A(h|x-y_\lambda|)}} {M_{\alpha}}<\infty;
    \end{equation}
    \item[$(2)$] for any compact set $K\subseteq\mathbb{R}^n$, there is $C_K>0$ such that
        $$
        \sup_{x\in\RR^n}|\{\lambda\in \Lambda\,|\,x\in y_\lambda+K\}|\leq C_K;
        $$
    \item[$(3)$] there is an open bounded neighbourhood $U$ of the origin in $\RR^n$ such that $\mathbb{R}^n=\cup_{\lambda\in\Lambda}(y_\lambda+U)$;
    \item[$(4)$] $\sum_{\lambda\in\Lambda}\psi_\lambda(x)=1$, $x\in \mathbb{R}^n$.
\end{itemize}
\end{definition}

The summation in $(4)$ always makes sense. To see this, we first record the following fact. It can be shown as in \cite[Theorem 22]{feic111}, however, for the sake of completeness we give a direct proof.

\begin{lemma}\label{lemma-for-familyof-points-b}
Let $\Lambda$ be a countable index set and let $\{y_{\lambda}\}_{\lambda\in\Lambda}$ be a family of points in $\RR^n$ which satisfies the condition $(2)$. Then, for every $\varepsilon>0$
$$
\sup_{\mu\in\Lambda}\sum_{\lambda\in\Lambda} (1+|y_{\lambda}-y_{\mu}|)^{-n-\varepsilon}<\infty.
$$
\end{lemma}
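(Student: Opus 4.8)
The plan is to exploit condition $(2)$ to control how many points $y_\lambda$ can fall into any dyadic annulus around a fixed $y_\mu$, and then to sum geometrically. First I would fix $\mu\in\Lambda$ and, for each integer $j\geq 0$, consider the shell $R_j:=\{\lambda\in\Lambda : 2^{j-1}\leq 1+|y_\lambda-y_\mu| < 2^j\}$ (with $R_0$ absorbing the terms with $|y_\lambda-y_\mu|\leq 1$). On $R_j$ the summand $(1+|y_\lambda-y_\mu|)^{-n-\varepsilon}$ is bounded above by $2^{-(j-1)(n+\varepsilon)}$, so the whole sum is at most $\sum_{j\geq 0} 2^{-(j-1)(n+\varepsilon)}\,|R_j|$, and everything reduces to a uniform-in-$\mu$ bound of the form $|R_j|\leq C\,2^{jn}$.

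To get that cardinality bound I would invoke condition $(2)$ with $K$ equal to the closed ball $\overline{B}(0,1)$ of radius $1$: there is a constant $C_1:=C_{\overline{B}(0,1)}$ with $\sup_{x\in\RR^n}|\{\lambda\in\Lambda : |x-y_\lambda|\leq 1\}|\leq C_1$. Now cover the ball $\overline{B}(y_\mu,2^j)$ by a family of $N_j$ balls of radius $1$, where by a standard volume-packing argument $N_j\leq c_n\, 2^{jn}$ for a dimensional constant $c_n$ (independent of $\mu$ and $j$). Every $\lambda\in R_j$ has $y_\lambda\in \overline{B}(y_\mu,2^j)$, hence lies within distance $1$ of at least one of these $N_j$ centres; since each centre catches at most $C_1$ indices $\lambda$, we obtain $|R_j|\leq C_1 N_j \leq C_1 c_n\, 2^{jn}$. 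Crucially this estimate does not depend on $\mu$, so it survives the supremum over $\mu$.

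Combining the two steps,
$$
\sup_{\mu\in\Lambda}\sum_{\lambda\in\Lambda}(1+|y_\lambda-y_\mu|)^{-n-\varepsilon}\leq \sum_{j\geq 0} 2^{-(j-1)(n+\varepsilon)} C_1 c_n\, 2^{jn} = C_1 c_n\, 2^{n+\varepsilon}\sum_{j\geq 0} 2^{-j\varepsilon} = \frac{C_1 c_n\, 2^{n+\varepsilon}}{1-2^{-\varepsilon}} < \infty,
$$
where the geometric series converges precisely because $\varepsilon>0$. This gives the claimed uniform bound.

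The only mildly delicate point is the covering estimate $N_j\leq c_n 2^{jn}$; I expect this to be the main (though routine) obstacle, and it is handled either by a maximal $1$-separated subset of $\overline{B}(y_\mu,2^j)$ (whose disjoint radius-$1/2$ balls pack into $\overline{B}(y_\mu,2^j+1/2)$, yielding a count $\leq (2\cdot 2^j+1)^n$) or by simply using a lattice of mesh $\sim 1/\sqrt{n}$. Everything else is bookkeeping: the shells $R_j$ partition $\Lambda$, the exponent $n+\varepsilon$ beats the growth $2^{jn}$ of the shell cardinalities with room $2^{-j\varepsilon}$ to spare, and no property of the $y_\lambda$ beyond $(2)$ is needed.
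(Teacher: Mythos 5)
Your argument is correct: condition $(2)$ with $K=\overline{B}(0,1)$ gives the uniform bound $\sup_{x}|\{\lambda:|x-y_\lambda|\le 1\}|\le C_1$, the covering of $\overline{B}(y_\mu,2^j)$ by $O(2^{jn})$ unit balls (via a maximal $1$-separated set or a lattice) then yields $|R_j|\le C_1c_n2^{jn}$ uniformly in $\mu$, and the geometric series $\sum_j 2^{-j\varepsilon}$ closes the estimate; the only blemishes are cosmetic (the shell $R_0$ as literally defined is empty, so the small terms actually sit in $R_1$, and one should note $|R_0|$-type counts are handled by a single unit ball), none of which affects validity. This is, however, a different route from the paper's proof: there, each discrete term $(1+|y_\lambda-y_\mu|)^{-n-\varepsilon}$ is replaced by its average over the unit ball $y_\lambda+K$, the elementary comparison $1+|y_\lambda-y_\mu|\ge \tfrac12(1+|x-y_\mu|)$ on that ball converts the sum into $\int_{\RR^n}(1+|x-y_\mu|)^{-n-\varepsilon}\sum_\lambda \mathbf{1}_{y_\lambda+K}(x)\,dx$, and condition $(2)$ bounds the indicator sum by $C_K$, so the whole estimate collapses to the single fact that $(1+|\cdot|)^{-n-\varepsilon}\in L^1(\RR^n)$. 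The paper's averaging trick avoids any covering or packing lemma and is a few lines shorter; your dyadic counting argument is more elementary in that it uses no integration at all and makes the quantitative content (how many points can sit in each annulus) explicit, at the modest cost of the routine covering estimate $N_j\le c_n2^{jn}$. Both proofs use only condition $(2)$ and give comparable constants, so either is acceptable.
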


\begin{proof} Set $K:=\{x\in\RR^n\,|\, |x|\leq 1\}$ and let $C_K>0$ be the constant from the condition $(2)$ for $K$; hence $\sum_{\lambda\in\Lambda} \mathbf{1}_{y_{\lambda}+K}(x)\leq C_K$, $\forall x\in\RR^n$. For $\mu\in\Lambda$, we have
\begin{align*}
\sum_{\lambda\in\Lambda} \frac{1}{(1+|y_{\lambda}-y_{\mu}|)^{n+\varepsilon}}&= \frac{1}{|K|} \sum_{\lambda\in\Lambda} \int_{y_{\lambda}+K}\frac{dx}{(1+|y_{\lambda}-y_{\mu}|)^{n+\varepsilon}}\\
&\leq \frac{2^{n+\varepsilon}}{|K|} \sum_{\lambda\in\Lambda} \int_{y_{\lambda}+K}\frac{dx}{(1+|x-y_{\mu}|)^{n+\varepsilon}}\\
&= \frac{2^{n+\varepsilon}}{|K|}  \int_{\RR^n}\frac{1}{(1+|x-y_{\mu}|)^{n+\varepsilon}} \sum_{\lambda\in\Lambda}\mathbf{1}_{y_{\lambda}+K}(x)dx\\
&\leq 2^{n+\varepsilon}C_K\|(1+|\cdot|)^{-n-\varepsilon}\|_{L^1(\RR^n)}/|K|.
\end{align*}
\end{proof}

Employing this lemma, a straightforward computation shows that if $\{\psi_{\lambda}\}_{\lambda\in\Lambda}$ and $\{y_{\lambda}\}_{\lambda\in\Lambda}$ satisfy $(1)-(3)$, then the series $\sum_{\lambda\in\Lambda}\psi_\lambda$ is absolutely summable in $\DD^*_{L^{\infty}_{(1+|\cdot|)^{-n-1}}}(\RR^n)$. Consequently, if $\{\psi_{\lambda}\}_{\lambda\in\Lambda}$ is a UCPU of class $*-\dagger$, then for any $f\in\SSS'^*_{\dagger}(\RR^n)$, the series $\sum_{\lambda\in\Lambda}\psi_{\lambda}f$ is absolutely summable to $f$ in $\SSS'^*_{\dagger}(\RR^n)$.

\begin{remark}\label{boun-bupuforlealge-nor}
Notice that the condition $(1)$ is equivalent to the following: the set $\{T_{-y_{\lambda}}\psi_{\lambda}\,|\, \lambda\in\Lambda\}$ is a bounded subset of $\SSS^*_{\dagger}(\RR^n)$. Consequently, $\sup_{\lambda\in\Lambda}\|\psi_\lambda\|_{\mathcal{F}L^1_{\eta}} <\infty$ for any weight $\eta$ of class $*$.
\end{remark}

\begin{remark}\label{rem-bupu-of-class-a-dss}
A typical example of a UCPU of class $*-\dagger$ is to take $\Lambda=\ZZ^n$, $y_{\mathbf{n}}=\mathbf{n}$ and $\psi_{\mathbf{n}}:=(T_{\mathbf{n}}\phi)*\varphi$, $\mathbf{n}\in\ZZ^n$, where $\phi\in\DD(\RR^n)$ is such that $\supp\phi\subseteq (-1,1)^n$, $\sum_{\mathbf{n}\in\ZZ^n}T_{\mathbf{n}}\phi(x)=1$, and $\varphi$ is any element of $\SSS^*_{\dagger}(\RR^n)$ that satisfies $\int_{\RR^n}\varphi(x)dx=1$ (for example, a normalised Gaussian). Notice that $\psi_{\mathbf{n}}=T_{\mathbf{n}}\psi$ with $\psi:=\phi*\varphi$. Similar constructions can be performed when $\Lambda$ is an arbitrary lattice in $\RR^n$.
\end{remark}

\begin{remark}
The definition of UCPU is inspired by the bounded uniform partitions of unity (abbreviated as BUPU) introduced by Feichtinger \cite{feich} in the distributional setting (cf. \cite{f-p-p,lep-m}) and in the non-quasianalytic setting. In the distributional setting, a BUPU is a family of functions $\psi_{\lambda}$, $\lambda\in\Lambda$, belonging to $\DD(\RR^n)$ which satisfy the conditions $(2)$ and $(4)$, while the condition $(1)$ is replaced with the following two
\begin{itemize}
\item[$(0)'$] there is an open bounded neighbourhood $V$ of the origin in $\RR^n$ such that $\supp\psi_{\lambda}\subseteq y_{\lambda}+V$, $\lambda\in\Lambda$;
\item[$(1)'$] $\sup_{\lambda\in\Lambda}\|\psi_{\lambda}\|_{\mathcal{F}L^1_{\nu}}<\infty$, where $\nu$ is a measurable submultiplicative polynomially bounded weight (usually chosen appropriately for the intended purpose).
\end{itemize}
Because of $(0)'$ and $(2)$, the series in $(4)$ is locally finite which, in turn, implies that $(3)$ is automatically satisfied. When $\{M_p\}_{p\in\NN}$ is non-quasianalytic, the definition of a BUPU remains the same with only one change: now one requires that $\psi_{\lambda}\in\DD^*(\RR^n)$ \cite{Komatsu1}, for all $\lambda\in\Lambda$. However, when $\{M_p\}_{p\in\NN}$ is quasianalytic, this is no longer applicable as there are no non-trivial functions with compact support in $\SSS^*_{\dagger}(\RR^n)$. We have chosen to change the terminology to UCPU to avoid confusion, since the term ``uniform'' in BUPU indicates that the supports of the functions have uniform size.
\end{remark}

\begin{remark}\label{bupufor-s-dis}
Similarly, one can consider UCPUs in the distributional setting. The definition is the same with the only difference being that now $\psi_{\lambda}\in\SSS(\RR^n)$, $\lambda\in\Lambda$, and the condition $(1)$ is replaced with
\begin{itemize}
\item[$(1)''$] $\sup_{\lambda\in\Lambda}\sup_{|\alpha|\leq N}\sup_{x\in\RR^n} |\partial^{\alpha}\psi_{\lambda}(x)|(1+|x-y_{\lambda}|)^N<\infty$, for all $N\in\NN$.
\end{itemize}
\end{remark}

We are going to need the following technical result.

\begin{lemma}\label{est-partial-growt-fourbeuralg}
Let $\eta$ be a weight of class $*$ and let $K$ be a compact subset of $\RR^n$. Let $\chi\in\SSS^*_{\dagger}(\RR^n)$ and  $\Psi=\{\psi_{\lambda}\}_{\lambda\in\Lambda}$ be a UCPU of class $*-\dagger$ with points $\{y_{\lambda}\}_{\lambda\in\Lambda}$. In the Beurling case, for every $\tilde{h}>0$ there exists $\tilde{C}\geq 1$ such that
\begin{equation}\label{est-fourleb-alg-growth}
\sup_{x\in y_{\mu}+K}\|\psi_{\lambda}T_x\chi\|_{\mathcal{F}L^1_{\eta}}\leq \tilde{C}e^{-A(\tilde{h}|y_{\lambda}-y_{\mu}|)},\quad \mbox{for all}\,\, \lambda,\mu\in\Lambda.
\end{equation}
In the Roumieu case, there are $\tilde{h}>0$ and $\tilde{C}\geq 1$ such that \eqref{est-fourleb-alg-growth} holds true.
\end{lemma}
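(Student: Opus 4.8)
The plan is to reduce \eqref{est-fourleb-alg-growth} to an application of Lemma~\ref{lemma-for-inc-dl1-f}: I would show that, for a suitable fixed $h_0>0$, the product $\psi_\lambda T_x\chi$ lies in $\DD^{M_p,h_0}_{L^1}(\RR^n)$ with $\|\psi_\lambda T_x\chi\|_{\DD^{M_p,h_0}_{L^1}}\le C\,e^{-A(\tilde h|y_\lambda-y_\mu|)}$ for $x\in y_\mu+K$, and then invoke the continuous embedding $\DD^{M_p,h_0}_{L^1}(\RR^n)\hookrightarrow\mathcal{F}L^1_{\eta}$ that the proof of Lemma~\ref{lemma-for-inc-dl1-f} provides (for $h_0$ large enough in the Beurling case, and for any $h_0>0$ in the Roumieu case, with an embedding constant $C_{h_0}$). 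Of the defining conditions of a UCPU only $(1)$ is used, and only through Remark~\ref{boun-bupuforlealge-nor}: the set $\{T_{-y_\lambda}\psi_\lambda\mid\lambda\in\Lambda\}$ is bounded in $\SSS^*_{\dagger}(\RR^n)$, which supplies constants $A_h$ (with $h>0$ arbitrary in the Beurling case and fixed in the Roumieu case) such that $h^{|\alpha|}|\partial^\alpha\psi_\lambda(z)|\le A_hM_\alpha e^{-A(h|z-y_\lambda|)}$ for all $\lambda,\alpha,z$; similarly $\chi\in\SSS^*_{\dagger}(\RR^n)$ yields $h^{|\alpha|}|\partial^\alpha(T_x\chi)(z)|\le B_hM_\alpha e^{-A(h|z-x|)}$.

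Next I would estimate the derivatives of the product by the Leibniz rule, using $(M.2)$ in the form $M_\beta M_{\alpha-\beta}\le c_0H^{|\alpha|}M_\alpha$ together with $\sum_{\beta\le\alpha}\binom{\alpha}{\beta}=2^{|\alpha|}$, to obtain
\[
|\partial^\alpha(\psi_\lambda T_x\chi)(z)|\le c_0A_hB_h\,(2H/h)^{|\alpha|}M_\alpha\,e^{-A(h|z-y_\lambda|)}e^{-A(h|z-x|)}.
\]
Taking $h\ge\max\{2Hh_0,4H\tilde h\}$ (in the Roumieu case $h$ is dictated by the Gelfand--Shilov exponents of $\psi_\lambda$ and $\chi$, and one then \emph{sets} $h_0:=h/(2H)$ and $\tilde h:=h/(4H)$) makes $h_0^{|\alpha|}(2H/h)^{|\alpha|}\le 1$, so that after multiplying by $h_0^{|\alpha|}$, integrating in $z$, and dividing by $M_\alpha$, everything reduces to bounding $\int_{\RR^n}e^{-A(h|z-y_\lambda|)}e^{-A(h|z-x|)}\,dz$.

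The heart of the argument, and the step I expect to be the main obstacle, is to split this integral so as to produce \emph{simultaneously} a factor decaying in $|y_\lambda-y_\mu|$ and a factor that is integrable in $z$. Writing the exponent as $\tfrac12(\,\cdot\,)+\tfrac12(\,\cdot\,)$, on the first half I would use $e^{A(\rho+\lambda)}\le 2e^{A(2\rho)}e^{A(2\lambda)}$ and the triangle inequality $|z-y_\lambda|+|z-x|\ge|y_\lambda-x|$ to get $A(h|z-y_\lambda|)+A(h|z-x|)\ge A(\tfrac h2|y_\lambda-x|)-\ln 2$, while on the second half I would simply discard the term involving $y_\lambda$; then $e^{2A(\rho)}\le c_0e^{A(H\rho)}$ converts each occurring $\tfrac12A(\rho)$ into $A(\rho/H)$ up to multiplicative constants. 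This gives $e^{-A(h|z-y_\lambda|)}e^{-A(h|z-x|)}\le c_0\sqrt2\,e^{-A(\frac{h}{2H}|y_\lambda-x|)}e^{-A(\frac hH|z-x|)}$, hence
\[
\int_{\RR^n}e^{-A(h|z-y_\lambda|)}e^{-A(h|z-x|)}\,dz\le c_0\sqrt2\Big(\int_{\RR^n}e^{-A(\frac hH|w|)}\,dw\Big)e^{-A(\frac{h}{2H}|y_\lambda-x|)},
\]
the remaining integral being finite because $e^{A(\cdot)}$ dominates every power. Finally, since $x\in y_\mu+K$ one has $|y_\lambda-x|\ge|y_\lambda-y_\mu|-R_K$ with $R_K:=\sup_{k\in K}|k|$: when $|y_\lambda-y_\mu|\ge 2R_K$ this gives $A(\tfrac{h}{2H}|y_\lambda-x|)\ge A(\tilde h|y_\lambda-y_\mu|)$ by the choice of $h$, and when $|y_\lambda-y_\mu|<2R_K$ one uses $e^{-A(\,\cdot\,)}\le 1\le e^{A(2\tilde hR_K)}e^{-A(\tilde h|y_\lambda-y_\mu|)}$; collecting all the constants (together with $C_{h_0}$ from Lemma~\ref{lemma-for-inc-dl1-f}) yields the claimed $\tilde C\ge 1$. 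The Beurling/Roumieu dichotomy is only a matter of whether $h$, hence $\tilde h$, may be chosen arbitrarily large or is instead pinned down by the exponents of $\psi_\lambda$ and $\chi$; apart from this exponent bookkeeping and keeping the three parameters $h_0$, $h$, $\tilde h$ mutually compatible, the computation is routine.
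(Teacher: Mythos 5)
Your proposal is correct and follows essentially the same route as the paper: reduce $\|\psi_\lambda T_x\chi\|_{\mathcal{F}L^1_{\eta}}$ to the $\DD^{M_p,h}_{L^1}$-norm via the technique of Lemma \ref{lemma-for-inc-dl1-f}, estimate that norm by the Leibniz rule together with $(M.2)$ and the uniform bounds from condition $(1)$ of the UCPU, and then manipulate the exponentials $e^{-A(h|z-y_\lambda|)}e^{-A(h|z-x|)}$ (splitting, subadditivity of $A$, and $e^{2A(\rho)}\le c_0e^{A(H\rho)}$) to extract the decay in $|y_\lambda-y_\mu|$, with the compact set $K$ absorbed into the constant. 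The only differences from the paper are cosmetic—where you split off the integrable factor and that you compare $|y_\lambda-x|$ with $|y_\lambda-y_\mu|$ directly rather than inserting intermediate factors $e^{A(h|y-y_\mu|/(2H))}$—and your parameter bookkeeping for the Beurling/Roumieu cases is consistent with the paper's.
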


\begin{proof} There are $C,\tau>0$ (resp., for every $\tau>0$ there is $C>0$) such that $\eta(x)\leq Ce^{M(\tau|x|)}$, $x\in\RR^n$. In the Beurling case, let $h>0$ be arbitrary but fixed; without loss in generality, we can assume $h\geq H\tau\sqrt{n}$. In the Roumieu case, choose $h>0$ such that $\chi\in\SSS^{M_p,2h}_{A_p,2h}$ and the condition $(1)$ on the UCPU $\Psi$ holds true with $2h$ and then pick $\tau>0$ such that $\tau\leq h/(H\sqrt{n})$. Employing the same technique as in the proof of Lemma \ref{lemma-for-inc-dl1-f}, we infer
\begin{equation}
|\mathcal{F}^{-1}(\psi_{\lambda}T_x\chi)(\xi)|e^{M(h|\xi|/\sqrt{n})}\leq \|\psi_{\lambda}T_x\chi\|_{\DD^{M_p,h}_{L^1}},\quad \xi\in\RR^n.
\end{equation}
Consequently,
\begin{equation}\label{ine-four-intultradif}
\|\psi_{\lambda}T_x\chi\|_{\mathcal{F}L^1_{\eta}}\leq C_1 \|e^{-M(\tau|\cdot|)}e^{M(\tau H|\cdot|)} \mathcal{F}^{-1}(\psi_{\lambda}T_x\chi)\|_{L^1}\leq C_2 \|\psi_{\lambda}T_x\chi\|_{\DD^{M_p,h}_{L^1}}.
\end{equation}
For $\lambda,\mu\in\Lambda$ and $x\in y_{\mu}+K$, we have
\begin{align*}
h^{|\alpha|}&\|\partial^{\alpha}(\psi_{\lambda}T_x\chi)\|_{L^1(\RR^n)}/M_{\alpha}\\
& \leq \frac{C_3}{2^{|\alpha|}}\sum_{\beta\leq \alpha} {\alpha\choose\beta} \int_{\RR^n}\frac{(2h)^{|\alpha|}|\partial^{\beta}\psi_{\lambda}(y)|e^{A(h|y-y_{\lambda}|)} |\partial^{\alpha-\beta}\chi(y-x)| e^{A(h|y-x|)}}{M_{\alpha} e^{A(h|y-y_{\lambda}|)} e^{A(h|y-x|/H)} e^{A(h|y-x|/H)}}dy\\
&\leq C_4 \sup_{y\in\RR^n} e^{-A(h|y-y_{\lambda}|)} e^{-A(h|y-x|/H)}\\
&\leq 2C_4e^{-A(h|y_{\lambda}-y_{\mu}|/(4H))} \sup_{y\in\RR^n} e^{-A(h|y-y_{\lambda}|)}e^{A(h|y_{\lambda}-y|/(2H))} e^{-A(h|y-x|/H)}e^{A(h|y-y_{\mu}|/(2H))}\\
&\leq 4C_4e^{-A(h|y_{\lambda}-y_{\mu}|/(4H))}e^{A(h|x-y_{\mu}|/H)}\leq C_5 e^{-A(h|y_{\lambda}-y_{\mu}|/(4H))}.
\end{align*}
The claim in the Lemma immediately follows once we employ these bounds in \eqref{ine-four-intultradif}.
\end{proof}

The proof of the following result is analogous and we omit it (cf. Lemma \ref{lemma-for-inc-dl1-f}).

\begin{lemma}\label{cor-for-dis-betw-two-elebupu}
Let $\eta$ be a weight of class $*$ and let $\{\psi_{\lambda}\}_{\lambda\in\Lambda}$ be a UCPU of class $*-\dagger$. Then for every $h>0$ (resp., there is $h>0$) such that
$$
\sup_{\lambda,\mu\in\Lambda}e^{A(h|y_{\lambda}-y_{\mu}|)} \|\psi_{\lambda}\psi_{\mu}\|_{\mathcal{F}L^1_{\eta}}<\infty.
$$
\end{lemma}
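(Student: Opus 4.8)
The plan is to imitate, almost verbatim, the two-step argument behind Lemma~\ref{est-partial-growt-fourbeuralg}, with the window $T_x\chi$ now played by the second partition function $\psi_{\mu}$. This is in fact slightly easier than Lemma~\ref{est-partial-growt-fourbeuralg}, because $\psi_{\mu}$ (unlike $T_x\chi$) already carries the uniform concentration bound supplied by condition~$(1)$ of Definition~\ref{def-bupuultr-for-aml}, so the compact set $K$ and the extra translation disappear. Throughout, the Beurling/Roumieu split of the three linked parameters — the exponent of the $\DD^{M_p,h}_{L^1}$-norm used, the exponent $h'$ used in condition~$(1)$, and the decay rate $\tilde h$ produced in the conclusion — is handled exactly as in the proof of that lemma.

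\emph{Step 1 (reduction to a $\DD^{M_p,h}_{L^1}$-estimate).} Write $\eta(\xi)\le C e^{M(\tau|\xi|)}$, with $\tau$ fixed in the Beurling case and arbitrarily small in the Roumieu case, and choose $h>0$ large (Beurling) or keep $\tau$ small (Roumieu) so that $\tau H\sqrt n\le h$. Exactly as in the proof of Lemma~\ref{lemma-for-inc-dl1-f}, from $|\xi^{\alpha}||\mathcal F^{-1}(\psi_\lambda\psi_\mu)(\xi)|\le\|\partial^{\alpha}(\psi_\lambda\psi_\mu)\|_{L^1(\RR^n)}$ one obtains $|\mathcal F^{-1}(\psi_\lambda\psi_\mu)(\xi)|e^{M(h|\xi|/\sqrt n)}\le\|\psi_\lambda\psi_\mu\|_{\DD^{M_p,h}_{L^1}}$; since then $\eta\, e^{-M(h|\cdot|/\sqrt n)}\in L^1(\RR^n)$ (by $e^{2M(\rho)}\le c_0e^{M(H\rho)}$ and the growth of $M(\cdot)$), we get, as in \eqref{ine-four-intultradif},
$$
\|\psi_\lambda\psi_\mu\|_{\mathcal F L^1_{\eta}}\le C_1\,\|\psi_\lambda\psi_\mu\|_{\DD^{M_p,h}_{L^1}},\qquad \lambda,\mu\in\Lambda .
$$

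\emph{Step 2 (the $\DD^{M_p,h}_{L^1}$-estimate).} By the Leibniz rule, together with $M_{\alpha}\ge M_{\beta}M_{\alpha-\beta}$ for $\beta\le\alpha$ (a consequence of $(M.1)$ and $M_0=1$) and $\sum_{\beta\le\alpha}\binom{\alpha}{\beta}=2^{|\alpha|}$,
$$
\frac{h^{|\alpha|}\,|\partial^{\alpha}(\psi_\lambda\psi_\mu)(y)|}{M_{\alpha}}\le\sum_{\beta\le\alpha}\binom{\alpha}{\beta}\,\frac{h^{|\beta|}|\partial^{\beta}\psi_\lambda(y)|}{M_{\beta}}\cdot\frac{h^{|\alpha-\beta|}|\partial^{\alpha-\beta}\psi_\mu(y)|}{M_{\alpha-\beta}} .
$$
Applying condition~$(1)$ of Definition~\ref{def-bupuultr-for-aml} to \emph{both} $\psi_\lambda$ and $\psi_\mu$, with a parameter $h'$ which is a sufficiently large fixed multiple of $h$ (this is the only place the Beurling/Roumieu distinction enters: $h'$ is arbitrary in the Beurling case and is a fixed admissible value, after shrinking $h$, in the Roumieu case), each factor is bounded by $C_0(h/h')^{|\beta|}e^{-A(h'|y-y_\lambda|)}$, resp. $C_0(h/h')^{|\alpha-\beta|}e^{-A(h'|y-y_\mu|)}$, whence
$$
\frac{h^{|\alpha|}\,|\partial^{\alpha}(\psi_\lambda\psi_\mu)(y)|}{M_{\alpha}}\le C_0^2\,(2h/h')^{|\alpha|}\,e^{-A(h'|y-y_\lambda|)}e^{-A(h'|y-y_\mu|)}\le C_0^2\,e^{-A(h'|y-y_\lambda|)}e^{-A(h'|y-y_\mu|)} .
$$
The only genuinely new point relative to Lemma~\ref{est-partial-growt-fourbeuralg} is now to split this product of the two localized weights: using $e^{2A(\rho)}\le c_0e^{A(H\rho)}$ to pass to enough decay, and then the triangle inequality $|y_\lambda-y_\mu|\le|y-y_\lambda|+|y-y_\mu|$ together with $e^{A(\rho+\sigma)}\le 2e^{A(2\rho)}e^{A(2\sigma)}$, one bounds the right-hand side by $C\,e^{-A(\tilde h|y_\lambda-y_\mu|)}\,e^{-A(\tilde h|y-y_\lambda|)}e^{-A(\tilde h|y-y_\mu|)}$ for a suitable $\tilde h>0$, where the last factor is integrable in $y$ because $A(\rho)$ eventually dominates $(n+1)\ln\rho$. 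Integrating over $y\in\RR^n$ and taking the supremum over $\alpha$ gives $\|\psi_\lambda\psi_\mu\|_{\DD^{M_p,h}_{L^1}}\le C_2\,e^{-A(\tilde h|y_\lambda-y_\mu|)}$, and combining with Step 1 yields $\sup_{\lambda,\mu}e^{A(\tilde h|y_\lambda-y_\mu|)}\|\psi_\lambda\psi_\mu\|_{\mathcal F L^1_{\eta}}<\infty$. In the Beurling case $h'$, hence $\tilde h$, may be taken as large as we please, so the estimate holds with any prescribed rate; in the Roumieu case $\tilde h$ is the rate produced by the construction.

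\emph{Main obstacle.} There is essentially none: the argument is routine once Lemmas~\ref{lemma-for-inc-dl1-f} and~\ref{est-partial-growt-fourbeuralg} are available, which is why the paper omits it. The only real care needed is in the bookkeeping of the three linked parameters and in the repeated application of the sub-additivity estimates for $A(\cdot)$ to turn the product $e^{-A(h'|y-y_\lambda|)}e^{-A(h'|y-y_\mu|)}$ into genuine off-diagonal decay $e^{-A(\tilde h|y_\lambda-y_\mu|)}$ while retaining an $L^1(dy)$ remainder.
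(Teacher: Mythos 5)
Your argument is correct and is exactly the proof the paper has in mind when it says the lemma is "analogous" to Lemma \ref{est-partial-growt-fourbeuralg}: reduce $\|\cdot\|_{\mathcal{F}L^1_{\eta}}$ to the $\DD^{M_p,h}_{L^1}$-norm via the Fourier estimate of Lemma \ref{lemma-for-inc-dl1-f}, then use Leibniz, condition $(1)$ for both $\psi_{\lambda}$ and $\psi_{\mu}$, and the inequalities $e^{2A(\rho)}\leq c_0e^{A(H\rho)}$ and $e^{A(\rho+\sigma)}\leq 2e^{A(2\rho)}e^{A(2\sigma)}$ to trade part of the two localized decay factors for the off-diagonal factor $e^{-A(\tilde h|y_{\lambda}-y_{\mu}|)}$ while keeping an integrable remainder in $y$; your bookkeeping of the Beurling/Roumieu parameters matches the paper's.
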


\subsection{The main results}

For the definition of the amalgam spaces having (D)TMIB as a local component, we need several technical results about the following class of subspaces of $\SSS'^*_{\dagger}(\RR^n)$. Let $E$ be a TMIB or a DTMIB space of class $*-\dagger$. Set
$$
E_{\operatorname{loc}}:=\{f\in\SSS'^*_{\dagger}(\RR^n)\,|\, f\chi\in E,\, \forall \chi\in\SSS^*_{\dagger}(\RR^n)\}.
$$
We equip $E_{\operatorname{loc}}$ with the topology given by the system of seminorms
$$
E_{\operatorname{loc}}\rightarrow[0,\infty),\, f\mapsto\|f\chi\|_E,\quad \chi\in\SSS^*_{\dagger}(\RR^n).
$$

\begin{remark}\label{rem-for-dif-tmibspacdtspa}
A (D)TMIB space $E$ of class $*-\dagger$ can also be a (D)TMIB space for other pair of sequences $\{M_p\}_{p\in\NN}$ and $\{A_p\}_{p\in\NN}$. In fact, every (D)TMIB space of class $*-\dagger$ is also a (D)TMIB space of class $(p!)-(p!)$. One can show that $E_{\operatorname{loc}}$ does not depend on $\{M_p\}_{p\in\NN}$ but, in general, it does depend on $\{A_p\}_{p\in\NN}$. However, we will not need this fact, since $E_{\operatorname{loc}}$ is only an auxiliary space that will help us to define the amalgam spaces. At the very end, we will show that the amalgam spaces do not depend on either $\{M_p\}_{p\in\NN}$ nor $\{A_p\}_{p\in\NN}$ (see Proposition \ref{depe-donton-seamalgams} and Remark \ref{rem-for-amalg-spac-not-deponseq} below).
\end{remark}

\begin{lemma}\label{lemma-locspa-continc}
The space $E_{\operatorname{loc}}$ is a Hausdorff complete l.c.s. Furthermore $E\rightarrow E_{\operatorname{loc}}\hookrightarrow \SSS'^*_{\dagger}(\RR^n)$.
\end{lemma}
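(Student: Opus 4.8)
The plan is to establish, in order: (a) $E\subseteq E_{\operatorname{loc}}$ with the inclusion $E\to E_{\operatorname{loc}}$ continuous; (b) the continuous dense inclusion $E_{\operatorname{loc}}\hookrightarrow\SSS'^*_{\dagger}(\RR^n)$, which simultaneously settles that $E_{\operatorname{loc}}$ is Hausdorff (that it is a l.c.s.\ is immediate, being defined by an explicit directed family of seminorms); and (c) completeness. The device used throughout is the multiplicative factorisation of bounded sets, Proposition \ref{facto-s-mul}, which is the substitute, in the (quasi)analytic setting, for ``multiply by a plateau cutoff''. For (a): by Lemma \ref{lemma-for-inc-dl1-f} with $\eta=\nu_E$ one has $\SSS^*_{\dagger}(\RR^n)\subseteq\mathcal{F}L^1_{\nu_E}$ continuously, so \eqref{con-mul-module-aaa} gives $\|\chi e\|_E\leq C\|\chi\|_{\mathcal{F}L^1_{\nu_E}}\|e\|_E$ for all $\chi\in\SSS^*_{\dagger}(\RR^n)$, $e\in E$; hence $\chi e\in E$, so $E\subseteq E_{\operatorname{loc}}$, and this estimate says precisely that each defining seminorm $f\mapsto\|f\chi\|_E$ of $E_{\operatorname{loc}}$ is continuous on $E$, i.e.\ $E\to E_{\operatorname{loc}}$ is continuous.

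For (b), the map $E_{\operatorname{loc}}\to\SSS'^*_{\dagger}(\RR^n)$ is well defined and injective directly from the definition of $E_{\operatorname{loc}}$, so only continuity must be checked. Fix a bounded $B\subseteq\SSS^*_{\dagger}(\RR^n)$ and use Proposition \ref{facto-s-mul} to write $B=\varphi_B B_1$ with $\varphi_B\in\SSS^*_{\dagger}(\RR^n)$ and $B_1$ bounded; for $f\in E_{\operatorname{loc}}$ and $\varphi=\varphi_B\tilde\varphi\in B$ we get $\langle f,\varphi\rangle=\langle f\varphi_B,\tilde\varphi\rangle$ with $f\varphi_B\in E$, so by continuity of $E\subseteq\SSS'^*_{\dagger}(\RR^n)$, $\sup_{\varphi\in B}|\langle f,\varphi\rangle|\leq\sup_{\tilde\varphi\in B_1}|\langle f\varphi_B,\tilde\varphi\rangle|\leq C_{B_1}\|f\varphi_B\|_E$, i.e.\ the seminorm $\sup_{\varphi\in B}|\langle\cdot,\varphi\rangle|$ of $\SSS'^*_{\dagger}(\RR^n)$ is dominated on $E_{\operatorname{loc}}$ by the single defining seminorm $f\mapsto\|f\varphi_B\|_E$. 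A continuous injection into the Hausdorff space $\SSS'^*_{\dagger}(\RR^n)$ makes $E_{\operatorname{loc}}$ Hausdorff. For density, note $\SSS^*_{\dagger}(\RR^n)\subseteq E\subseteq E_{\operatorname{loc}}$ and that $\SSS^*_{\dagger}(\RR^n)$ is dense in $\SSS'^*_{\dagger}(\RR^n)$.

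The real content is (c), which can be framed as showing that the topological embedding $f\mapsto(f\chi)_{\chi}$ of $E_{\operatorname{loc}}$ into $\prod_{\chi}E$ has closed image, i.e.\ as reconstructing a limit. Let $(f_i)_i$ be Cauchy in $E_{\operatorname{loc}}$; for each $\chi$ the net $(f_i\chi)_i$ is Cauchy in the Banach space $E$, so $f_i\chi\to g_\chi$ for some $g_\chi\in E$, and continuity of multiplication by $\chi$ on $E$ yields the compatibility relations $\chi_2 g_{\chi_1}=\chi_1 g_{\chi_2}$. To produce the limit as an ultradistribution, fix a UCPU $\{\psi_\lambda\}_{\lambda\in\Lambda}$ of class $*-\dagger$ and set $\langle f,\varphi\rangle:=\sum_{\lambda}\langle g_{\psi_\lambda},\varphi\rangle$. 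Factoring a given $\varphi$ as $\varphi=\chi\tilde\varphi$ (Proposition \ref{facto-s-mul}) and combining the compatibility relations with the absolute summability $\sum_{\lambda}\psi_\lambda g_\chi=g_\chi$ in $\SSS'^*_{\dagger}(\RR^n)$ (recorded after Lemma \ref{lemma-for-familyof-points-b}), the series converges absolutely and $\langle f,\varphi\rangle=\langle g_\chi,\tilde\varphi\rangle$, so $f$ is a well-defined linear form. Running the factorisation estimate of (b) once more gives $\sup_{\varphi\in B}|\langle f,\varphi\rangle|\leq C_{B_1}\|g_{\varphi_B}\|_E<\infty$ for every bounded $B=\varphi_B B_1$, so $f$ is bounded on bounded sets; since $\SSS^*_{\dagger}(\RR^n)$ is bornological (Fr\'echet in the Beurling, $(DFS)$ in the Roumieu case), a linear form bounded on bounded sets is continuous, whence $f\in\SSS'^*_{\dagger}(\RR^n)$. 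The same identity with an extra factor $\chi$ gives $\chi f=g_\chi\in E$ for all $\chi$, so $f\in E_{\operatorname{loc}}$, and $f_i\chi\to g_\chi=f\chi$ in $E$ for every $\chi$ means $f_i\to f$ in $E_{\operatorname{loc}}$; hence $E_{\operatorname{loc}}$ is complete.

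I expect the main obstacle to be this last reconstruction: producing a genuine $f\in\SSS'^*_{\dagger}(\RR^n)\cap E_{\operatorname{loc}}$ out of the compatible family $(g_\chi)_\chi\subseteq E$ of ``local pieces'' when no pieces with compact support are available. Both the definition of $f$ and the verification of its continuity rest on replacing the test function $\varphi$ one pairs against by a product $\varphi_B\tilde\varphi$ and absorbing one factor into an element of $E$, and it is exactly here, through Proposition \ref{facto-s-mul}, that the argument departs from the classical non-quasianalytic one.
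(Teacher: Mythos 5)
Your proof is correct, and its first two parts run along the same lines as the paper's: the inclusion $E\to E_{\operatorname{loc}}$ is continuous by the module estimate \eqref{con-mul-module-aaa}, and the continuity of $E_{\operatorname{loc}}\subseteq \SSS'^*_{\dagger}(\RR^n)$ is obtained exactly as in the paper by factoring a bounded set $B=\varphi_B B_1$ via Proposition \ref{facto-s-mul} and absorbing $\varphi_B$ into $E$ (your uniform use of the seminorm $e\mapsto\sup_{\tilde\varphi\in B_1}|\langle e,\tilde\varphi\rangle|$ even treats the TMIB and DTMIB cases in one stroke, where the paper writes $\sup_{\psi\in B_1}\|\psi\|_{E'}$ and calls the dual case analogous). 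The differences are in the remaining two points. For Hausdorffness the paper invokes the UCPU decomposition $f=\sum_{\lambda}f\psi_{\lambda}$, while you deduce it from the continuous injection into the Hausdorff space $\SSS'^*_{\dagger}(\RR^n)$; both are fine, and yours is economical since you prove that continuity anyway. For completeness the paper's argument is much shorter than yours: a Cauchy net in $E_{\operatorname{loc}}$ is Cauchy in the complete space $\SSS'^*_{\dagger}(\RR^n)$ (strong dual of a Fr\'echet, resp.\ $(DFS)$, space), hence converges there to some $f$; for each $\chi$ one has $f_i\chi\to g_{\chi}$ in $E$ and $f_i\chi\to f\chi$ in $\SSS'^*_{\dagger}(\RR^n)$, so $f\chi=g_{\chi}\in E$ and $f_i\to f$ in $E_{\operatorname{loc}}$. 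Your explicit reconstruction of the limit from the compatible family $(g_{\chi})_{\chi}$ via a UCPU, the identity $\chi g_{\psi_{\lambda}}=\psi_{\lambda}g_{\chi}$, the absolute summability $\sum_{\lambda}\psi_{\lambda}g_{\chi}=g_{\chi}$, and bornologicity of $\SSS^*_{\dagger}(\RR^n)$ is correct, but it amounts to re-deriving by hand the limit that completeness of $\SSS'^*_{\dagger}(\RR^n)$ hands you for free; what it buys is a self-contained construction that does not cite that completeness and illustrates how UCPUs and Proposition \ref{facto-s-mul} replace compactly supported cutoffs, at the cost of several consistency checks (associativity of the module products with the ultradistributional one) that the shorter argument avoids.
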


\begin{proof} The fact that $E_{\operatorname{loc}}$ is Hausdorff immediately follows from the decomposition $f=\sum_{\lambda\in\Lambda} f\psi_{\lambda}$, $f\in E_{\operatorname{loc}}$, where $\{\psi_{\lambda}\}_{\lambda\in\Lambda}$ is any UCPU of class $*-\dagger$. Clearly $E\subseteq E_{\operatorname{loc}}\subseteq \SSS'^*_{\dagger}(\RR^n)$. The first inclusion is clearly continuous. To prove the continuity of the second inclusion, let $B$ be a bounded subset of $\SSS^*_{\dagger}(\RR^d)$. We employ Proposition \ref{facto-s-mul} to find $\chi\in\SSS^*_{\dagger}(\RR^n)$  and a bounded subset $B_1$ of $\SSS^*_{\dagger}(\RR^n)$ such that $B=\chi B_1$. If $E$ is a TMIB, for $f\in E_{\operatorname{loc}}$, we infer
$$
\sup_{\varphi\in B}|\langle f,\varphi\rangle|=\sup_{\psi\in B_1}|\langle f\chi,\psi\rangle|\leq \|f\chi\|_E\sup_{\psi\in B_1}\|\psi\|_{E'},
$$
which proves the desired continuity; the case when $E$ is a DTMIB is analogous. The density of $E_{\operatorname{loc}}$ in $\SSS'^*_{\dagger}(\RR^n)$ follows from the density of $\SSS^*_{\dagger}(\RR^n)$ in the latter space. The completeness of $E_{\operatorname{loc}}$ can be easily deduced form the continuity of the inclusion $E_{\operatorname{loc}}\subseteq \SSS'^*_{\dagger}(\RR^n)$.
\end{proof}

\begin{lemma}\label{lemma-reg-e-loc-proper}
Let $B$ be a non-empty bounded subset of $\SSS^*_{\dagger}(\RR^n)$.
\begin{itemize}
\item[$(i)$] There exist $\chi\in\SSS^*_{\dagger}(\RR^n)$ and $C\geq 1$ such that $\sup_{\varphi\in B}\|f\varphi\|_E\leq C\|f\chi\|_E$, for all $f\in E_{\operatorname{loc}}$.
\item[$(ii)$] For any $f\in E_{\operatorname{loc}}$, the mapping $\SSS^*_{\dagger}(\RR^n)\rightarrow E$, $\varphi\mapsto f\varphi$, is continuous.
\item[$(iii)$] For any $\varphi\in\SSS^*_{\dagger}(\RR^d)$ and $f\in E_{\operatorname{loc}}$, the mapping $\RR^n\rightarrow E$, $x\mapsto fT_x\varphi$, is continuous.
\item[$(iv)$] For any $f\in E_{\operatorname{loc}}$, the function $\RR^n\rightarrow [0,\infty)$, $x\mapsto \sup_{\varphi\in B}\|fT_x\varphi\|_E$, is Borel measurable.
\end{itemize}
\end{lemma}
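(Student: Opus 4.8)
The plan is to reduce all four statements to a single uniform estimate using a UCPU, and in particular to Lemma~\ref{est-partial-growt-fourbeuralg}, which gives summable off-diagonal decay when one multiplies an element of the fixed bounded set $B$ (viewed near the point $y_\mu$) against a partition function $\psi_\lambda$.

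For $(i)$, I would first use Proposition~\ref{facto-s-mul} to write $B=\chi B_1$ with $\chi\in\SSS^*_\dagger(\RR^n)$ and $B_1$ bounded in $\SSS^*_\dagger(\RR^n)$. Then for $\varphi\in B$, write $\varphi=\chi\psi$ with $\psi\in B_1$; since $\psi\in\mathcal F L^1_{\nu_E}$ with a uniform bound over $\psi\in B_1$ (Lemma~\ref{lemma-for-inc-dl1-f} applied to the weight $\nu_E$ of class $*$, together with the fact that $B_1$ is bounded in $\DD^*_{L^1}(\RR^n)$), the module inequality \eqref{con-mul-module-aaa} yields
$$
\|f\varphi\|_E=\|(f\chi)\psi\|_E\leq \|\psi\|_{\mathcal F L^1_{\nu_E}}\|f\chi\|_E\leq C\|f\chi\|_E ,
$$
with $C:=\sup_{\psi\in B_1}\|\psi\|_{\mathcal F L^1_{\nu_E}}<\infty$. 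This gives $(i)$ directly.

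Item $(ii)$ follows from $(i)$ by a linearity/boundedness argument: the map $\varphi\mapsto f\varphi$ is linear, and $(i)$ applied to an arbitrary bounded set $B$ shows it maps bounded sets to bounded sets, hence is continuous since $\SSS^*_\dagger(\RR^n)$ is bornological (it is a Fr\'echet space in the Beurling case and a $(DFS)$-space in the Roumieu case). For $(iii)$, fix $\varphi\in\SSS^*_\dagger(\RR^n)$ and $f\in E_{\operatorname{loc}}$, and for $x,x_0\in\RR^n$ estimate
$$
\|fT_x\varphi-fT_{x_0}\varphi\|_E=\|f(T_x\varphi-T_{x_0}\varphi)\|_E\leq C\|f\chi\|_E\,\|\widetilde\psi\|_{\mathcal F L^1_{\nu_E}},
$$
where one invokes $(i)$ with the bounded set $B=\{T_x\varphi-T_{x_0}\varphi : |x-x_0|\leq 1\}$; a cleaner route is to factor $T_x\varphi=\chi\cdot T_x\psi$ for a suitable fixed $\chi$ (valid since $\{T_x\varphi : |x-x_0|\le1\}$ is bounded in $\SSS^*_\dagger$) and use that $x\mapsto T_x\psi$ is continuous $\RR^n\to\mathcal F L^1_{\nu_E}$ (because translation is strongly continuous on $L^1_{\nu_E}$, $\nu_E$ being continuous after passing to an equivalent weight), then apply $\|(f\chi)(T_x\psi-T_{x_0}\psi)\|_E\leq \|f\chi\|_E\|T_x\psi-T_{x_0}\psi\|_{\mathcal F L^1_{\nu_E}}\to 0$.

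For $(iv)$, I would note that by $(i)$ and $(iii)$ the function $x\mapsto\|fT_x\varphi\|_E$ is continuous for each fixed $\varphi\in\SSS^*_\dagger(\RR^n)$. The supremum over $\varphi\in B$ is then lower semicontinuous, hence Borel: writing $B=\chi B_1$ as above, $\sup_{\varphi\in B}\|fT_x\varphi\|_E=\sup_{\psi\in B_1}\|f T_x(\chi\cdot\check{}\,)\dots\|$—more carefully, one should first reduce to a countable dense subset of $B_1$ (using separability of $\SSS^*_\dagger(\RR^n)$ and continuity of $\psi\mapsto f(T_x\chi)\psi$ from part~$(ii)$-type reasoning), so that $\sup_{\varphi\in B}\|fT_x\varphi\|_E$ is a countable supremum of continuous functions of $x$, hence lower semicontinuous and Borel measurable. \textbf{The main obstacle} is the bookkeeping in $(iv)$: one must ensure the supremum over the (possibly uncountable) bounded set $B$ can be replaced by a countable supremum, which requires combining the factorisation $B=\chi B_1$ with the continuity of multiplication by $f\chi$ as a map $\SSS^*_\dagger(\RR^n)\to E$ (a consequence of the $\mathcal F L^1_{\nu_E}$-module estimate) and the separability of $\SSS^*_\dagger(\RR^n)$; none of the other items present real difficulty once $(i)$ is in hand.
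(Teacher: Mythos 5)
Your parts $(i)$ and $(ii)$ are correct and essentially identical to the paper's proof: factor $B=\chi B_1$ by Proposition \ref{facto-s-mul}, use the module bound \eqref{con-mul-module-aaa} with $\sup_{\psi\in B_1}\|\psi\|_{\mathcal{F}L^1_{\nu_E}}<\infty$, and conclude $(ii)$ from $(i)$ since $\SSS^*_{\dagger}(\RR^n)$ is bornological.

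The genuine gap is in $(iii)$. Your first estimate, obtained by applying $(i)$ to the bounded set $\{T_x\varphi-T_{x_0}\varphi\,:\,|x-x_0|\leq 1\}$, only gives a bound that is uniform in $x$; it does not tend to $0$ as $x\to x_0$, so it cannot yield continuity. Your ``cleaner route'' rests on the factorisation $T_x\varphi=\chi\cdot T_x\psi$ with a \emph{fixed} $\chi$, which is false: if $\varphi=\chi\psi$, then $T_x\varphi=(T_x\chi)(T_x\psi)$, and no nonconstant $\chi\in\SSS^*_{\dagger}(\RR^n)$ can satisfy $\varphi(\cdot-x)=\chi\,\psi(\cdot-x)$ for all $x$. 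Applying Proposition \ref{facto-s-mul} to the bounded orbit $\{T_x\varphi\,:\,|x-x_0|\leq1\}$ only gives $T_x\varphi=\chi\psi_x$ with $\{\psi_x\}$ bounded; the factorisation theorem provides no continuity of $x\mapsto\psi_x$, so the convergence $\|T_x\psi-T_{x_0}\psi\|_{\mathcal{F}L^1_{\nu_E}}\to0$ you invoke is not available. The repair is the paper's device: write $\varphi=\varphi_1\varphi_2$, telescope $fT_x\varphi-fT_{x_0}\varphi=fT_x\varphi_1(T_x\varphi_2-T_{x_0}\varphi_2)+fT_{x_0}\varphi_2(T_x\varphi_1-T_{x_0}\varphi_1)$, control $\sup_{|x-x_0|\leq1}\|fT_x\varphi_1\|_E$ by $(i)$, and use norm-continuity of $x\mapsto T_x\varphi_j$ in $\mathcal{F}L^1_{\nu_E}$; alternatively, combine $(ii)$ with the (standard, but to-be-verified) continuity of the orbit map $x\mapsto T_x\varphi$ into $\SSS^*_{\dagger}(\RR^n)$. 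Once $(iii)$ is repaired, your first observation in $(iv)$ already suffices: a pointwise supremum of continuous functions, even over an uncountable family, is lower semicontinuous and hence Borel, with finiteness coming from $(i)$. This is actually simpler than the paper's argument, which reduces to a countable dense subset of $\overline{B}$ via compactness and (in the Roumieu case) metrisability of $\overline{B}$; the countable-reduction bookkeeping you identify as the main obstacle is unnecessary, and your sketched reduction (via ``separability of $\SSS^*_{\dagger}(\RR^n)$'') would in any case need the more careful treatment the paper gives in the non-metrisable Roumieu case.
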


\begin{proof} To prove $(i)$, we apply Proposition \ref{facto-s-mul} to infer the existence of $\chi\in \SSS^*_{\dagger}(\RR^n)$ and a bounded subset $B_1$ of $\SSS^*_{\dagger}(\RR^n)$ such that $B=\chi B_1$. Then
$$
\sup_{\varphi\in B}\|f\varphi\|_E=\sup_{\psi\in B_1}\|f\chi\psi\|_E\leq \|f\chi\|_E\sup_{\psi\in B_1}\|\psi\|_{\mathcal{F}L^1_{\nu_E}},
$$
which verifies $(i)$. Since $\SSS^*_{\dagger}(\RR^n)$ is bornological, to prove $(ii)$ it suffices to show that this mapping maps bounded sets into bounded sets. The latter follows from $(i)$.\\
\indent We turn our attention to $(iii)$. Let $x_0\in \RR^n$ be arbitrary but fixed. We employ Proposition \ref{facto-s-mul} to find $\varphi_1,\varphi_2\in\SSS^*_{\dagger}(\RR^n)$ such that $\varphi=\varphi_1\varphi_2$. Clearly, $\{T_x\varphi_1\,|\, |x-x_0|\leq 1\}$ is a bounded subset of $\SSS^*_{\dagger}(\RR^n)$ and hence $(i)$ implies that $C_1:=\sup_{|x-x_0|\leq 1}\|fT_x\varphi_1\|_E<\infty$. For $x\in \RR^n$ satisfying $|x-x_0|\leq1$, we estimate as follows
\begin{align*}
\|fT_x\varphi-fT_{x_0}\varphi\|_E&\leq \|fT_x\varphi_1(T_x\varphi_2-T_{x_0}\varphi_2)\|_E +\|fT_{x_0}\varphi_2(T_x\varphi_1-T_{x_0}\varphi_1)\|_E\\
&\leq C_1\|T_x\varphi_2-T_{x_0}\varphi_2\|_{\mathcal{F}L^1_{\nu_E}}+ \|fT_{x_0}\varphi_2\|_E\|T_x\varphi_1-T_{x_0}\varphi_1\|_{\mathcal{F}L^1_{\nu_E}},
\end{align*}
which implies the validity of $(iii)$.\\
\indent It remains to prove $(iv)$. Since $\sup_{\varphi\in B}\|fT_x\varphi\|_E=\sup_{\varphi\in \overline{B}}\|fT_x\varphi\|_E$ (in view of $(ii)$), we can assume that $B$ is closed. We claim that $B$ is a compact metrisable space when equipped with the induced topology from $\SSS^*_{\dagger}(\RR^n)$. The compactness follows from the fact that $\SSS^*_{\dagger}(\RR^n)$ is Montel. This proves the claim in the Beurling case. The metrisability in the Roumieu case follows from \cite[Theorem 1.7, p. 128]{Sch} since $\SSS^{\{M_p\}}_{\{A_p\}}(\RR^n)$ is the strong dual of the separable Montel space $\SSS'^{\{M_p\}}_{\{A_p\}}(\RR^n)$ and the weak and strong topologies coincide on the bounded subsets of $\SSS^{\{M_p\}}_{\{A_p\}}(\RR^n)$. Hence, there exists a countable dense subset of $B$. In view of $(iii)$, the claim in $(iv)$ follows.
\end{proof}

We are now ready to define the amalgam spaces with (D)TMIB of class $*-\dagger$ as a local component. Let $\chi\in\SSS^*_{\dagger}(\RR^n)\backslash\{0\}$, let $\eta$ be a weight of class $\dagger$ and let $E$ be a TMIB or a DTMIB space of class $*-\dagger$. In view of Lemma \ref{lemma-reg-e-loc-proper} $(iii)$, for any $f\in E_{\operatorname{loc}}$ the function $\RR^n\rightarrow [0,\infty)$, $x\mapsto \|fT_x\chi\|_E$, is continuous so we can take its $L^p_{\eta}$-norm, $1\leq p\leq\infty$, as well as its $\mathcal{C}_{\eta,0}$-norm; of course these may be infinite. We define the amalgam space $W(E,L^p_{\eta})$, $1\leq p\leq \infty$, to be the space of all $f\in E_{\operatorname{loc}}$ such that
\begin{equation}\label{norm-lpbspp}
\|f\|_{p,\chi,\eta}:=\left(\int_{\RR^n} \|fT_x\chi\|_E^p\eta(x)^pdx\right)^{1/p}<\infty
\end{equation}
with the obvious modification when $p=\infty$. Clearly, $\|\cdot\|_{p,\chi,\eta}$ is a seminorm on $W(E,L^p_{\eta})$, $1\leq p\leq \infty$; a priori it is not clear that it is a norm. Similarly, we define $W(E,\mathcal{C}_{\eta,0})$ as the space of all $f\in E_{\operatorname{loc}}$ such that the function $\RR^n\rightarrow [0,\infty)$, $x\mapsto \|fT_x\chi\|_E$ belongs to $\mathcal{C}_{\eta,0}(\RR^n)$. Clearly, $\|\cdot\|_{\infty,\chi,\eta}$ is a seminorm on this space.

\begin{proposition}\label{lemma-amalg-space-alter-dfnts}
${}$
\begin{itemize}
\item[$(i)$] The definitions of $W(E,L^p_{\eta})$, $1\leq p\leq\infty$, and $W(E,\mathcal{C}_{\eta,0})$ do not depend on the choice of $\chi\in\SSS^*_{\dagger}(\RR^n)\backslash\{0\}$. For each $\chi\in\SSS^*_{\dagger}(\RR^n)\backslash\{0\}$, \eqref{norm-lpbspp} is a norm on $W(E,L^p_{\eta})$, $1\leq p\leq \infty$, and \eqref{norm-lpbspp} with $p=\infty$ is a norm on $W(E,\mathcal{C}_{\eta,0})$. Furthermore, different choices of $\chi\in\SSS^*_{\dagger}(\RR^n)\backslash\{0\}$ produce equivalent norms. Moreover, $W(E,L^p_{\eta})$, $1\leq p\leq\infty$, and $W(E,\mathcal{C}_{\eta,0})$ are Banach spaces and they are continuously included into $E_{\operatorname{loc}}$.
\item[$(ii)$] Let $B$ be a bounded subset of $\SSS^*_{\dagger}(\RR^n)$ such that $B\backslash\{0\}\neq \emptyset$. For any $f\in E_{\operatorname{loc}}$ and $1\leq p\leq \infty$, $f\in W(E,L^p_{\eta})$ if and only if the function $\RR^n\rightarrow [0,\infty)$, $x\mapsto \sup_{\varphi\in B}\|fT_x\varphi\|_E$, belongs to $L^p_{\eta}(\RR^n)$. Furthermore,
\begin{equation}\label{norm-lpama-boundsubset}
\|f\|_{p,B,\eta}:=\left(\int_{\RR^n}\sup_{\varphi\in B}\|fT_x\varphi\|_E^p\eta(x)^pdx\right)^{1/p}
\end{equation}
is a norm on $W(E,L^p_{\eta})$ equivalent to the norm \eqref{norm-lpbspp}. For any $f\in E_{\operatorname{loc}}$, $f\in W(E,\mathcal{C}_{\eta,0})$ if and only if the function $\RR^n\rightarrow [0,\infty)$, $x\mapsto \sup_{\varphi\in B}\|fT_x\varphi\|_E$, belongs to $L^{\infty}_{\eta,0}(\RR^n)$ and the initial norm on $W(E,\mathcal{C}_{\eta,0})$ is equivalent to the norm \eqref{norm-lpama-boundsubset} with $p=\infty$.
\end{itemize}
\end{proposition}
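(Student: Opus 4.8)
The plan is to reduce everything to a single \emph{comparison estimate} and then run standard convolution bookkeeping. The estimate I would prove is: for any two windows $\chi,\gamma\in\SSS^*_{\dagger}(\RR^n)\setminus\{0\}$ there is a measurable $G=G_{\chi,\gamma}\geq 0$ on $\RR^n$ with $G(x)\leq Ce^{-A(h|x|)}$ — for every $h>0$ in the Beurling case, for some $h>0$ in the Roumieu case — such that
\[
\|fT_x\chi\|_E\leq\int_{\RR^n}G(x-v)\,\|fT_v\gamma\|_E\,dv ,\qquad x\in\RR^n,\ f\in E_{\operatorname{loc}}
\]
(the right-hand side allowed to be $+\infty$); the analogous estimate should hold with $\gamma$ replaced by a bounded $B\subseteq\SSS^*_{\dagger}(\RR^n)$ with $B\setminus\{0\}\neq\emptyset$ and $\|fT_v\gamma\|_E$ replaced by $\sup_{\varphi\in B}\|fT_v\varphi\|_E$, with $G$ depending only on $\chi,\gamma,B$. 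Granting this and its symmetric counterpart (obtained by swapping $\chi$ and $\gamma$), both parts of the proposition become routine.

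To prove the comparison estimate I would use the short-time Fourier transform as a reproducing formula, kept inside $\SSS^*_{\dagger}(\RR^n)$. Fix $\gamma\in\SSS^*_{\dagger}(\RR^n)\setminus\{0\}$. For $\chi\in\SSS^*_{\dagger}(\RR^n)$ one has $V_\gamma\chi(u,\eta)=\mathcal F(\chi\,\overline{T_u\gamma})(\eta)$, with $|V_\gamma\chi(u,\eta)|\leq Ce^{-A(h|u|)-M(h|\eta|)}$ (for every $h$, Beurling; for some $h$, Roumieu), while $\|M_\eta T_u\gamma\|_{\SSS^{M_p,k}_{A_p,k}}\leq C_k e^{A(Ck|u|)+M(Ck|\eta|)}$ for all sufficiently small $k$. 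Balancing these two bounds (choosing $k$ small in the Roumieu case) shows that the $L^2$-inversion integral $\chi=\|\gamma\|_{L^2}^{-2}\iint V_\gamma\chi(u,\eta)M_\eta T_u\gamma\,du\,d\eta$ in fact converges as a Bochner integral in $\SSS^{M_p,k}_{A_p,k}(\RR^n)$, hence in $\SSS^*_{\dagger}(\RR^n)$. Applying this to $T_x\chi$ — so that $V_\gamma(T_x\chi)(u,\eta)=e^{-2\pi i\eta x}V_\gamma\chi(u-x,\eta)$ — multiplying by $f\in E_{\operatorname{loc}}$ (legitimate because $\varphi\mapsto f\varphi$ is continuous from $\SSS^*_{\dagger}(\RR^n)$ into $E$ by Lemma \ref{lemma-reg-e-loc-proper}$(ii)$ and carries the integral over to an $E$-valued Bochner integral), and using $f\cdot(M_\eta T_u\gamma)=M_\eta(fT_u\gamma)$ together with $\|M_\eta g\|_E\leq Ce^{M(\tau|\eta|)}\|g\|_E$, I obtain $\|fT_x\chi\|_E\leq C\iint|V_\gamma\chi(w,\eta)|e^{M(\tau|\eta|)}\|fT_{x+w}\gamma\|_E\,dw\,d\eta$; since $\nu_E$ is of class $*$ one may take $\tau<h$, so that integrating out $\eta$ yields $G(w):=C\int|V_\gamma\chi(w,\eta)|e^{M(\tau|\eta|)}\,d\eta$ with the stated decay. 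For the bounded-set version I would simply note that these bounds on $V_\gamma\varphi$ are uniform for $\varphi$ in a bounded subset of $\SSS^*_{\dagger}(\RR^n)$.

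For part $(i)$ I would argue as follows. Applying the comparison estimate with $\chi_1,\chi_2$ interchanged and using $\eta(x)\leq C\eta(v)e^{A(\tau|x-v|)}$ gives $\eta\,\|fT_\cdot\chi_1\|_E\leq C\bigl((Ge^{A(\tau|\cdot|)})*(\eta\,\|fT_\cdot\chi_2\|_E)\bigr)$ with $Ge^{A(\tau|\cdot|)}\in L^1(\RR^n)$ (take $\tau$ small, possible as $\eta$ is of class $\dagger$); Young's inequality then gives $\|f\|_{p,\chi_1,\eta}\leq C\|f\|_{p,\chi_2,\eta}$ for all $1\leq p\leq\infty$, and symmetrically — hence independence of $\chi$, equivalence of the norms \eqref{norm-lpbspp}, and (using $L^1*L^\infty_0(\RR^n)\subseteq\mathcal C_0(\RR^n)$ and continuity of $x\mapsto\|fT_x\chi\|_E$ from Lemma \ref{lemma-reg-e-loc-proper}$(iii)$) the same for $W(E,\mathcal C_{\eta,0})$. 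To see that \eqref{norm-lpbspp} is a norm: if $\|f\|_{p,\chi,\eta}=0$ then $\|fT_x\chi\|_E\equiv 0$ by continuity, so $f\cdot(M_\xi T_x\chi)=M_\xi(fT_x\chi)=0$ in $\SSS'^*_{\dagger}(\RR^n)$ for all $x,\xi$; feeding this into the $\SSS^*_{\dagger}$-convergent synthesis $\theta=\|\chi\|_{L^2}^{-2}\iint V_\chi\theta(u,\eta)M_\eta T_u\chi\,du\,d\eta$ gives $f\theta=0$ for every $\theta\in\SSS^*_{\dagger}(\RR^n)$, whence $f=0$ since $\SSS^*_{\dagger}(\RR^n)=\SSS^*_{\dagger}(\RR^n)\cdot\SSS^*_{\dagger}(\RR^n)$ by Proposition \ref{facto-s-mul}; homogeneity and the triangle inequality are immediate. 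Putting $x=0$ and $\gamma:=\psi$ in the comparison estimate and using H\"older gives $\|f\psi\|_E\leq C_\psi\|f\|_{p,\chi,\eta}$ (the kernel $G_{\psi,\chi}/\eta$ lies in $L^1(\RR^n)\subseteq L^{p'}(\RR^n)$ since $G$ decays super-exponentially while $1/\eta$ grows at most like $e^{A(\tau|\cdot|)}$), i.e.\ $W(E,L^p_\eta)\hookrightarrow E_{\operatorname{loc}}$ continuously; the same for $W(E,\mathcal C_{\eta,0})$. Completeness is then routine: a Cauchy sequence $(f_n)$ converges in the complete space $E_{\operatorname{loc}}$ (Lemma \ref{lemma-locspa-continc}), hence $f_nT_x\chi\to fT_x\chi$ in $E$ for each $x$, while $(\|f_nT_\cdot\chi\|_E)_n$ converges in $L^p_\eta$ (resp.\ $\mathcal C_{\eta,0}$); a.e.\ convergence of a subsequence identifies the limit with $\|fT_\cdot\chi\|_E$, so $f\in W(E,L^p_\eta)$ and $f_n\to f$ there.

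Part $(ii)$ is the same bookkeeping applied to the bounded-set comparison estimate: with $\varphi_0\in B\setminus\{0\}$ one has $\|fT_x\chi\|_E\leq(G_{\chi,\varphi_0}*\Psi_B)(x)$, where $\Psi_B(x):=\sup_{\varphi\in B}\|fT_x\varphi\|_E$, and conversely $\Psi_B(x)\leq(G_B*\|fT_\cdot\chi\|_E)(x)$ with $G_B$ uniform over $B$; weighting by $\eta$ and invoking Young's inequality — and, for the $\mathcal C_{\eta,0}$ statement, $L^1*L^\infty_0(\RR^n)\subseteq L^\infty_0(\RR^n)$ together with the measurability of $\Psi_B$ from Lemma \ref{lemma-reg-e-loc-proper}$(iv)$ — shows that \eqref{norm-lpama-boundsubset} is an equivalent norm and that $f\in W(E,\mathcal C_{\eta,0})$ iff $\Psi_B\in L^\infty_{\eta,0}(\RR^n)$. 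The hard part, and the step I expect to be the main obstacle, is making the STFT reproducing/synthesis formula converge \emph{in $\SSS^*_{\dagger}(\RR^n)$} rather than merely in $L^2$ or $\SSS'^*_{\dagger}(\RR^n)$, i.e.\ matching the decay of $V_\gamma\chi$ against the growth of $\|M_\eta T_u\gamma\|_{\SSS^{M_p,k}_{A_p,k}}$ in the correct Banach building block; this is precisely where the quasi-analytic case — in which no window is compactly supported, so the classical BUPU/partition-of-unity argument is unavailable — has to be circumvented.
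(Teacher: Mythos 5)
Your proposal is correct in substance, but it reaches the key "comparison estimate" by a genuinely different route than the paper. You obtain $\|fT_x\chi\|_E\leq\int G(x-v)\|fT_v\gamma\|_E\,dv$ from the STFT inversion formula, proved to converge as a Bochner integral in (a Banach step of) $\SSS^*_{\dagger}(\RR^n)$ and then pushed through the continuous map $\varphi\mapsto f\varphi$; this forces you to control both the decay of $V_\gamma\chi$ and the growth of $\|M_\eta T_u\gamma\|_{\SSS^{M_p,k}_{A_p,k}}$ and to pay a modulation factor $\nu_E$. The paper avoids time--frequency synthesis altogether: it uses the elementary identity $\|\psi\|_{L^2}^2\,(fT_x\chi)=\int_{\RR^n}fT_x\chi\,T_y\psi\,T_y\overline{\psi}\,dy$ (a Bochner integral in $E$, verified by pairing with test functions, see \eqref{ide-for-e-funwitht}), so the kernel is simply $G(w)=\|\psi\|_{L^2}^{-2}\|\overline{\psi}\,T_w\chi\|_{\mathcal{F}L^1_{\nu_E}}$, whose decay $e^{-A(h|w|)}$ comes from the same $\DD^{M_p,h}_{L^1}$-computation as in Lemma \ref{est-partial-growt-fourbeuralg}; only the translation-invariance of the $\mathcal{F}L^1_{\nu_E}$-norm and the module bound \eqref{con-mul-module-aaa} are needed, and Minkowski/Young plus moderateness of $\eta$ finish exactly as in your write-up. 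For part $(ii)$ the paper is also more economical: Proposition \ref{facto-s-mul} splits $B=\chi B_1$, so $\sup_{\varphi\in B}\|fT_x\varphi\|_E\leq\|fT_x\chi\|_E\sup_{\psi\in B_1}\|\psi\|_{\mathcal{F}L^1_{\nu_E}}$ pointwise, with the reverse inequality trivial from a fixed $\varphi_0\in B\setminus\{0\}$ and part $(i)$; your uniform-over-$B$ STFT bounds achieve the same but with more machinery. What your approach buys is a self-contained reproducing-formula viewpoint (closer to modulation-space techniques) that does not even need the factorisation theorem for the comparison estimate itself; what the paper's buys is brevity and weaker auxiliary estimates, which is why the step you flag as the main obstacle (convergence of the synthesis integral in $\SSS^*_{\dagger}$) simply does not arise there. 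Two cosmetic points: the kernel $e^{-A(h|\cdot|)}$ decays subexponentially (exponentially when $A_p=p!$), not super-exponentially; and $L^1(\RR^n)\subseteq L^{p'}(\RR^n)$ is false as stated --- what you actually use, and what is true, is that your kernel times $1/\eta$ is bounded and integrable, hence in every $L^{p'}$, so the embedding into $E_{\operatorname{loc}}$ and the completeness argument stand.
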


\begin{remark}
In view of Lemma \ref{lemma-reg-e-loc-proper} $(iv)$, it makes sense to compute \eqref{norm-lpama-boundsubset} for any $f\in E_{\operatorname{loc}}$; of course, this quantity may be infinite.
\end{remark}

\begin{proof} We first address $(i)$. For the moment, denote by $X_{p,\chi}$, $1\leq p\leq \infty$, the space $W(E, L^p_{\eta})$ defined by $\chi\in\SSS^*_{\dagger}(\RR^n)\backslash\{0\}$. Let $\chi,\psi\in \SSS^*_{\dagger}(\RR^n)\backslash\{0\}$ be arbitrary but fixed and let $f\in X_{p,\psi}$. In view of Lemma \ref{lemma-reg-e-loc-proper} $(iii)$, the mapping
\begin{equation}\label{fun-boc-int-rep-f}
\RR^{2n}\rightarrow E,\quad (x,y)\mapsto fT_x\chi T_y\psi T_y\overline{\psi},
\end{equation}
is continuous. We claim that it is Bochner integrable on $\RR^n_y$ for each fixed $x\in\RR^n$. To see this, denote by $q$ the H\"older conjugate index to $p$ and estimate as follows
\begin{align}
\int_{\RR^n}\|fT_x\chi T_y\psi T_y\overline{\psi}\|_Edy&\leq \int_{\RR^n}\|fT_y\psi\|_E\|T_y(\overline{\psi} T_{x-y}\chi)\|_{\mathcal{F}L^1_{\nu_E}}dy\label{est-for-equ-of-the-func}\\
&\leq \|f\|_{p,\psi,\eta} \left(\int_{\RR^n}\frac{\|\overline{\psi}T_{x-y}\chi\|_{\mathcal{F}L^1_{\nu_E}}^q} {\eta(y)^q}dy\right)^{1/q} \nonumber\\
&\leq \frac{C\|f\|_{p,\psi,\eta}}{\eta(x)}\left(\int_{\RR^n}\|\overline{\psi} T_y\chi\|_{\mathcal{F}L^1_{\nu_E}}^qe^{qA(\tau|y|)}dy\right)^{1/q}, \label{equ-for-inc-eloc-aml-spa-nectr}
\end{align}
for some $C,\tau>0$ (resp., for every $\tau>0$ and a corresponding $C>0$), with the obvious modification when $q=\infty$ (i.e., $p=1$). Employing Lemma \ref{lemma-for-inc-dl1-f}, it is straightforward to verify that the very last integral (or supremum over $\RR^n_y$ when $q=\infty$) is finite in the Beurling case and that one can find $\tau>0$ such that it is finite in the Roumieu case. This shows the Bochner integrability of \eqref{fun-boc-int-rep-f} over $\RR^n_y$ for each fixed $x\in\RR^n$. For each $x\in\RR^n$, we claim that
\begin{equation}\label{ide-for-e-funwitht}
\|\psi\|^2_{L^2(\RR^n)}fT_x\chi=e_x,\quad \mbox{where}\quad e_x:= \int_{\RR^n} fT_x\chi T_y\psi T_y\overline{\psi}dy\in E.
\end{equation}
Let $\varphi\in\SSS^*_{\dagger}(\RR^n)$. Since the function $\RR^{2n}\rightarrow \CC$, $(y,\xi)\mapsto \varphi(\xi)\psi(\xi-y)\overline{\psi(\xi-y)}$, belongs to $\SSS^*_{\dagger}(\RR^{2n})$, we infer
\begin{align*}
\langle e_x,\varphi\rangle&=\int_{\RR^n}\langle fT_x\chi T_y\psi T_y\overline{\psi},\varphi\rangle dy=\langle \mathbf{1}_{\RR^n}(y)\otimes (f(\xi)T_x\chi(\xi)),\varphi(\xi)\psi(\xi-y)\overline{\psi(\xi-y)}\rangle\\
&=\left\langle f(\xi)T_x\chi(\xi),\varphi(\xi)\int_{\RR^n} \psi(\xi-y)\overline{\psi(\xi-y)}dy\right\rangle=\|\psi\|^2_{L^2(\RR^n)}\langle fT_x\chi,\varphi\rangle,
\end{align*}
which proves \eqref{ide-for-e-funwitht}. We employ \eqref{est-for-equ-of-the-func} and the Minkowski integral inequality to infer
\begin{align}
\|f\|_{p,\chi,\eta}&\leq \|\psi\|^{-2}_{L^2(\RR^n)}\int_{\RR^n}\left(\int_{\RR^n}\|fT_{x-y}\psi\|_E^p \|\overline{\psi}T_y\chi\|_{\mathcal{F}L^1_{\nu_E}}^p\eta(x)^pdx\right)^{1/p}dy\nonumber\\
&\leq C_1\|f\|_{p,\psi,\eta};\label{norm-ine-strsv}
\end{align}
the case $p=\infty$ is analogous. This proves that the definition of $X_{p,\chi}$ does not depend on $\chi\in\SSS^*_{\dagger}(\RR^n)\backslash\{0\}$. It also proves that if $\|f\|_{p,\chi,\eta}=0$ for some $\chi\in\SSS^*_{\dagger}(\RR^n)\backslash\{0\}$ than this holds for all $\chi\in\SSS^*_{\dagger}(\RR^n)$, which implies that $f\chi=0$ for all $\chi\in\SSS^*_{\dagger}(\RR^n)$ and hence $f=0$ (cf. the proof of Lemma \ref{lemma-locspa-continc}). Consequently $\|\cdot\|_{p,\chi,\eta}$ is a norm for each $\chi\in\SSS^*_{\dagger}(\RR^n)\backslash\{0\}$ and \eqref{norm-ine-strsv} now proves that different choices of $\chi$ produce equivalent norms. Specialising \eqref{equ-for-inc-eloc-aml-spa-nectr} for $x=0$, we deduce $\|f\chi\|_E\leq C'_1\|f\|_{p,\psi,\eta}$ which implies that the inclusion $W(E,L^p_{\eta})\subseteq E_{\operatorname{loc}}$ is continuous. This fact together with the completeness of $E_{\operatorname{loc}}$ and Fatou's lemma imply that $W(E,L^p_{\eta})$ is complete (the case $p=\infty$ is trivial). Next, we prove that $W(E,\mathcal{C}_{\eta,0})$ is independent of $\chi$. Without loss of generality, we can assume that $\eta$ is continuous. Let $\chi,\psi\in\SSS^*_{\dagger}(\RR^n)\backslash\{0\}$ and let $f\in W(E,\mathcal{C}_{\eta,0})$ as defined by $\psi$. Then $f\in W(E,L^{\infty}_{\eta})$ and, in view of the above, both $\|f\|_{\infty,\chi,\eta}$ and $\|f\|_{\infty,\chi,\eta}$ are finite. Employing \eqref{est-for-equ-of-the-func}, we infer
\begin{equation}\label{L-infty ocena}
    \|fT_x\chi\|_E\eta(x)\leq C\int_{\RR^n}\|fT_{x-y}\psi\|_E\eta(x-y) \|\overline{\psi}T_y\chi\|_{\mathcal{F}L^1_{\nu_E}}e^{A(\tau|y|)}dy,\quad x\in\RR^n,
\end{equation}
for some $C,\tau>0$ in the Beurling case and for every $\tau>0$ and a corresponding $C>0$ in the Roumieu case. The integral $\int_{\RR^n}\|\overline{\psi}T_y\chi\|_{\mathcal{F}L^1_{\nu_E}}e^{A(\tau|y|)}dy$ is finite in the Beurling case and we can choose $\tau>0$ so that it is finite in the Roumieu case. Since $\sup_{x,y\in\RR^n}\|fT_{x-y}\psi\|_E\eta(x-y) \leq \|f\|_{\infty,\psi,\eta}$, dominated convergence implies that $\|fT_x\chi\|_E\eta(x)\rightarrow 0$, as $|x|\rightarrow\infty$, which yields that $W(E,\mathcal{C}_{\eta,0})$ is independent from the choice of $\chi\in\SSS^*_{\dagger}(\RR^n)\backslash\{0\}$. Since $W(E,\mathcal{C}_{\eta,0})$ is topologically imbedded into $W(E,L^{\infty}_{\eta})$, its completeness will follow once we show that it is a closed subspace of the latter. The proof of this fact is straightforward and we omit it.\\
\indent The validity of $(ii)$ follows from the fact that each bounded subset $B$ of $\SSS^*_{\dagger}(\RR^n)$ can be multiplicatively split as $B=\chi B_1$ with some $\chi\in\SSS^*_{\dagger}(\RR^n)$ and a bounded subset $B_1$ of $\SSS^*_{\dagger}(\RR^n)$ in view of Proposition \ref{facto-s-mul} (notice that $\sup_{\varphi\in B}\|fT_x\varphi\|_E\leq \|fT_x\chi\|_E\sup_{\psi\in B_1}\|\psi\|_{\mathcal{F}L^1_{\nu_E}}$, for all $f\in E_{\operatorname{loc}}$, $x\in\RR^n$).
\end{proof}

\begin{remark}\label{rem-amalgam-for-dis-withs-fun}
By employing the same technique, one can show all the claims in Proposition \ref{lemma-amalg-space-alter-dfnts} when $E$ is a (D)TMIB space of distributions and $\eta$ a polynomially bounded weight (i.e., moderate with respect to the Beurling weight $(1+|\cdot|)^{\tau}$, for some $\tau\geq 0$). In this case, instead of Proposition \ref{facto-s-mul}, one employs the analogous fact for $\SSS(\RR^n)$ \cite[Theorem 3.2]{Voigt} (cf. \cite{D-M,pet-vr}). This gives an alternative proof of the widely known fact that one can use windows without compact support to define the amalgam spaces in the distributional setting.
\end{remark}

Our goal is to provide a discrete characterisation via UCPUs of the amalgam spaces we defined above. We refer to \cite[Theorem 2]{feich} for such characterisation in the distributional and in the non-quasianalytic ultradistributional setting; however, the techniques employed in the proof of this result are not applicable when $\{M_p\}_{p\in\NN}$ is quasianalytic because of the lack of functions with compact support in $\SSS^*_{\dagger}(\RR^n)$.\\
\indent Let $\eta$ be a weight of class $\dagger$ and let $\Psi=\{\psi_{\lambda}\}_{\lambda\in \Lambda}$ be a UCPU of class $*-\dagger$ with points $\{y_{\lambda}\}_{\lambda\in\Lambda}$. We define $W_{\Psi}(E,L^p_{\eta})$, $1\leq p\leq\infty$, to be the space of all $f\in E_{\operatorname{loc}}$ such that
\begin{equation}\label{norm-amalgam-space-ultrad}
\|f\|_{p,\Psi,\eta}:=\|\{\|f\psi_{\lambda}\|_E \eta(y_{\lambda})\}_{\lambda\in\Lambda}\|_{\ell^p(\Lambda)}= \left(\sum_{\lambda\in\Lambda}\|f\psi_{\lambda}\|_E^p \eta(y_{\lambda})^p\right)^{1/p}<\infty
\end{equation}
with the obvious modification when $p=\infty$ and we equip it with the norm \eqref{norm-amalgam-space-ultrad}. Similarly, we define $W_{\Psi}(E,\mathcal{C}_{\eta,0})$ to be the space of all $f\in E_{\operatorname{loc}}$ such that $\{\|f\psi_{\lambda}\|_E\eta(y_{\lambda})\}_{\lambda\in\Lambda}\in c_0(\Lambda)$ and we equip it with the norm \eqref{norm-amalgam-space-ultrad} with $p=\infty$.

\begin{theorem}\label{the-g-voaml}
Let $E$ be a TMIB or a DTMIB space of class $*-\dagger$ and let $\eta$ be a weight of class $\dagger$. For any UCPU $\Psi=\{\psi_{\lambda}\}_{\lambda\in\Lambda}$ of class $*-\dagger$ with points $\{y_{\lambda}\}_{\lambda\in\Lambda}$ it holds that
$$
W(E,L^p_{\eta})=W_{\Psi}(E,L^p_{\eta}),\, 1\leq p\leq\infty,\quad \mbox{and} \quad W(E,\mathcal{C}_{\eta,0})=W_{\Psi}(E,\mathcal{C}_{\eta,0})
$$
with equivalent norms.
\end{theorem}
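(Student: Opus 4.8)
The plan is to establish the two continuous inclusions $W(E,L^p_{\eta})\subseteq W_{\Psi}(E,L^p_{\eta})$ and $W_{\Psi}(E,L^p_{\eta})\subseteq W(E,L^p_{\eta})$ (and the analogous pair for $\mathcal{C}_{\eta,0}$) by two–sided norm estimates, noting that by Proposition \ref{lemma-amalg-space-alter-dfnts}$(i)$ we are free to use any convenient window in $W(E,\cdot)$. A preliminary step I would carry out first is to upgrade Lemma \ref{est-partial-growt-fourbeuralg} to a uniform form: there are $\tilde h>0$ and $\tilde C\geq1$ (every $\tilde h$ works in the Beurling case) such that $\|\psi_{\lambda}T_{x}\varphi\|_{\mathcal{F}L^1_{\nu_E}}\leq\tilde C e^{-A(\tilde h|x-y_{\lambda}|)}$ for \emph{all} $x\in\RR^n$ and all $\lambda$, where $\varphi\in\SSS^*_{\dagger}(\RR^n)$ is fixed; indeed, writing $x\in y_{\mu}+U$ (condition $(3)$, $U$ bounded), Lemma \ref{est-partial-growt-fourbeuralg} bounds the left-hand side by $\tilde C e^{-A(\tilde h|y_{\lambda}-y_{\mu}|)}$, and $|y_{\lambda}-y_{\mu}|\geq|x-y_{\lambda}|-\operatorname{diam}U$ together with the standard properties of $A(\cdot)$ converts this into the claimed bound (after halving $\tilde h$). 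Combined with $e^{-A(\tilde h t)}\leq C(1+t)^{-n-1}$ and Lemma \ref{lemma-for-familyof-points-b}, this also gives $\sup_{x\in\RR^n}\sum_{\lambda\in\Lambda}e^{-A(\tilde h|x-y_{\lambda}|)}<\infty$ and boundedness on $\ell^p(\Lambda)$ of the Schur-type kernels $e^{-A(\tilde h|y_{\lambda}-y_{\mu}|)}$. These, the multiplicative factorisation (Proposition \ref{facto-s-mul}), and the module estimate \eqref{con-mul-module-aaa} are the recurring tools.

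For $W(E,L^p_{\eta})\subseteq W_{\Psi}(E,L^p_{\eta})$ I would fix $\psi\in\SSS^*_{\dagger}(\RR^n)\setminus\{0\}$, use it as window for $W(E,\cdot)$, and repeat the computation that produced \eqref{ide-for-e-funwitht} with $\psi_{\lambda}$ in place of $T_{x}\chi$, obtaining the $E$-valued Bochner integral identity $\|\psi\|^2_{L^2(\RR^n)}f\psi_{\lambda}=\int_{\RR^n}(fT_{y}\psi)(\psi_{\lambda}T_{y}\overline{\psi})\,dy$. Estimating under the integral sign with \eqref{con-mul-module-aaa} and the uniform decay gives $\|f\psi_{\lambda}\|_E\leq C\int_{\RR^n}\|fT_{y}\psi\|_E\,e^{-A(\tilde h|y-y_{\lambda}|)}\,dy$; multiplying by $\eta(y_{\lambda})$ and using $\eta(y_{\lambda})\leq C\eta(y)e^{A(\tau|y-y_{\lambda}|)}$ absorbs the weight into the decay, so that $\|f\psi_{\lambda}\|_E\eta(y_{\lambda})\leq C(g*w)(y_{\lambda})$ with $g(y):=\|fT_{y}\psi\|_E\eta(y)$ and $w:=e^{-A(h_0|\cdot|)}\in L^1(\RR^n)$. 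It then remains to observe the sampling inequality $\|\{(g*w)(y_{\lambda})\}_{\lambda}\|_{\ell^p(\Lambda)}\leq C\|g\|_{L^p(\RR^n)}$, which follows from Hölder's inequality with respect to the measure $w(y_{\lambda}-\cdot)\,dy$ and $\sup_{x}\sum_{\lambda}w(y_{\lambda}-x)<\infty$; since $\|g\|_{L^p}=\|f\|_{p,\psi,\eta}$ is an equivalent norm on $W(E,L^p_{\eta})$, this closes the inclusion. For $W(E,\mathcal{C}_{\eta,0})$ one adds the standard remark that $g*w$ vanishes at infinity whenever $g$ does, so the sampled sequence lies in $c_0(\Lambda)$.

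The reverse inclusion $W_{\Psi}(E,L^p_{\eta})\subseteq W(E,L^p_{\eta})$ is the real point, because the classical argument multiplies $\psi_{\lambda}$ by a cut-off that is $\equiv1$ on its support, and no such function exists in $\SSS^*_{\dagger}(\RR^n)$ in the quasianalytic regime. Instead I would insert the partition of unity $(4)$ twice and write $fT_{x}\chi=\sum_{\lambda,\mu\in\Lambda}f\psi_{\lambda}\psi_{\mu}T_{x}\chi$; this series converges absolutely in $E$ (by the bound about to be derived) and sums to $fT_{x}\chi$ because $\sum_{\lambda}\psi_{\lambda}=1$. Each summand is then estimated in \emph{two} ways: grouping it as $(f\psi_{\mu})(\psi_{\lambda}T_{x}\chi)$ and applying \eqref{con-mul-module-aaa} with the uniform decay gives $\|f\psi_{\lambda}\psi_{\mu}T_{x}\chi\|_E\leq C\|f\psi_{\mu}\|_E\,e^{-A(\tilde h|x-y_{\lambda}|)}$, and grouping it as $(f\psi_{\lambda})(\psi_{\mu}T_{x}\chi)$ gives $\leq C\|f\psi_{\lambda}\|_E\,e^{-A(\tilde h|x-y_{\mu}|)}$; the geometric mean of the two is $C\sqrt{\|f\psi_{\lambda}\|_E\|f\psi_{\mu}\|_E}\,e^{-\frac12A(\tilde h|x-y_{\lambda}|)}e^{-\frac12A(\tilde h|x-y_{\mu}|)}$. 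Summing over $\lambda,\mu$ the double sum factors as a square, $\|fT_{x}\chi\|_E\leq C\big(\sum_{\lambda}\|f\psi_{\lambda}\|_E^{1/2}e^{-\frac12A(\tilde h|x-y_{\lambda}|)}\big)^2$. Absorbing $\eta(x)^{1/2}$ into the sum via $\eta(x)^{1/2}\leq C\eta(y_{\lambda})^{1/2}e^{\frac12A(\tau|x-y_{\lambda}|)}$ (and taking $\tilde h$ large enough to dominate $\tau$) gives $\|fT_{x}\chi\|_E\,\eta(x)\leq C\big(\sum_{\lambda}c_{\lambda}^{1/2}e^{-A(h_0|x-y_{\lambda}|)}\big)^2$ with $c_{\lambda}:=\|f\psi_{\lambda}\|_E\eta(y_{\lambda})$. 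Taking $L^p$-norms in $x$ thus reduces to $\big\|\,x\mapsto\sum_{\lambda}c_{\lambda}^{1/2}e^{-A(h_0|x-y_{\lambda}|)}\,\big\|_{L^{2p}(\RR^n)}\leq C\|\{c_{\lambda}^{1/2}\}\|_{\ell^{2p}(\Lambda)}=C\|\{c_{\lambda}\}\|_{\ell^p(\Lambda)}^{1/2}$, which I would prove by partitioning $\RR^n$ into the sets contained in $y_{\nu}+U$ (condition $(3)$, bounded overlap from $(2)$), bounding $e^{-A(h_0|x-y_{\lambda}|)}$ there by $Ce^{-A(h_0'|y_{\nu}-y_{\lambda}|)}$, and applying a Schur estimate for the kernel $e^{-A(h_0'|y_{\nu}-y_{\lambda}|)}$; squaring then gives $\|f\|_{p,\chi,\eta}\leq C\|f\|_{p,\Psi,\eta}$, with the obvious modifications when $p=\infty$. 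The $\mathcal{C}_{\eta,0}$ case follows because, by density of finitely supported sequences in $c_0(\Lambda)$, the function $x\mapsto\sum_{\lambda}c_{\lambda}^{1/2}e^{-A(h_0|x-y_{\lambda}|)}$ tends to $0$ at infinity when $\{c_{\lambda}\}\in c_0$, while $x\mapsto\|fT_{x}\chi\|_E$ is continuous by Lemma \ref{lemma-reg-e-loc-proper}$(iii)$.

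The step I expect to be the main obstacle is exactly this reverse inclusion — concretely, the two-sided per-summand estimate whose geometric mean collapses the double series $\sum_{\lambda,\mu}$ into a single square. Together with the uniform-in-$x$ form of Lemma \ref{est-partial-growt-fourbeuralg}, this is the device that replaces the classical "fattened cut-off'' construction: a naive one-sided attempt, bounding $\|f\psi_{\lambda}T_{x}\chi\|_E$ directly in terms of $\|f\psi_{\lambda}\|_E$, runs into a circular dependence between the $E$-norms of $f$ localised at different points, and the symmetrisation above is what breaks it while keeping the weight $\eta$ and the exponent $p$ fully under control.
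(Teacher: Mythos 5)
Your proposal is correct in substance, but in the crucial direction it takes a genuinely different route from the paper. The paper proves the easy inclusion (its STEP 1) by directly averaging $\|f\psi_{\lambda}\|_E\eta(y_{\lambda})$ over $y_{\lambda}+K$ against the bounded-window norm of Proposition \ref{lemma-amalg-space-alter-dfnts}$(ii)$; your reproducing-identity-plus-sampling argument is an acceptable substitute of comparable difficulty. For the hard inclusion the paper works in two stages: its STEP 2 establishes $\|f\|_{p,\chi_0,\eta}\leq C\|f\|_{p,\Psi,\eta}$ only for $f$ \emph{already} in $W(E,L^p_{\eta})$, by splitting the sum over $\Lambda$ into a near part $S_{\mu}$ and a far part which, by Lemma \ref{lemma-est-sum-of-exponsmall}, contributes at most $2^{-p}\|f\|^p_{p,\chi_0,\eta}$ and is absorbed into the left-hand side (this needs the a priori finiteness), and its STEP 3 then upgrades to the set inclusion $W_{\Psi}\subseteq W$ by approximating $f$ with $f\varphi_j$, $\varphi_j=\mathcal{F}\phi_j$, and applying Fatou's lemma (with a separate tail argument for $\mathcal{C}_{\eta,0}$). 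Your geometric-mean device, i.e. estimating each term of $\sum_{\lambda,\mu}f\psi_{\lambda}\psi_{\mu}T_x\chi$ by both regroupings and taking the square root of the product, yields the direct bound $\|fT_x\chi\|_E\,\eta(x)\leq C\bigl(\sum_{\lambda}c_{\lambda}^{1/2}e^{-A(h_0|x-y_{\lambda}|)}\bigr)^2$ with $c_{\lambda}:=\|f\psi_{\lambda}\|_E\eta(y_{\lambda})$, valid for \emph{every} $f\in W_{\Psi}(E,L^p_{\eta})$, so you obtain the inclusion and the norm estimate in one pass, with no absorption and no approximation/Fatou step; the price is the auxiliary $\ell^{2p}\to L^{2p}$ synthesis bound (which indeed follows by interpolating the trivial $\ell^1\to L^1$ and $\ell^\infty\to L^\infty$ endpoints, the latter from Lemma \ref{lemma-for-familyof-points-b} together with condition $(3)$) and some extra bookkeeping with square roots of the weight, while the $\mathcal{C}_{\eta,0}$ case comes out almost for free. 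Two small points to repair in the write-up: in the Roumieu case you cannot ``take $\tilde h$ large enough to dominate $\tau$'', since $\tilde h$ is dictated by Lemma \ref{est-partial-growt-fourbeuralg} and the UCPU; instead choose $\tau$ small (possible because $\eta$ is a weight of class $\dagger$) and use $e^{2A(\rho)}\leq c_0e^{A(H\rho)}$ to convert $e^{-\frac12 A(\tilde h\rho)}$ into $e^{-A(\tilde h\rho/H)}$; and the identity $fT_x\chi=\sum_{\lambda,\mu}f\psi_{\lambda}\psi_{\mu}T_x\chi$ in $E$ should be justified by combining the absolute convergence in $E$ you derive with iterated summation in $\SSS'^*_{\dagger}(\RR^n)$ using $\sum_{\lambda}\psi_{\lambda}=1$.
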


For the proof of the theorem, we need the following technical result.

\begin{lemma}\label{lemma-est-sum-of-exponsmall}
Let $\{y_{\lambda}\}_{\lambda\in\Lambda}$ be a sequence of points in $\RR^n$ which satisfies the conditions $(2)$ and $(3)$ of Definition \ref{def-bupuultr-for-aml}. Then for every $h>0$ and $\varepsilon>0$ there exists $R>0$ such that
$$
\sup_{\mu\in\Lambda}\sum_{\{\lambda\in\Lambda\,|\, |y_{\lambda}-y_{\mu}|> R\}} e^{-A(h|y_{\lambda}-y_{\mu}|)}\leq \varepsilon.
$$
\end{lemma}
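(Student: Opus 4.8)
The plan is to deduce the estimate from the polynomial decay bound of Lemma \ref{lemma-for-familyof-points-b}. The key input is that the associated function $A(\cdot)$ grows faster than $\ln\rho^k$ for every $k\in\ZZ_+$ (recalled in Section \ref{notation}); more concretely, straight from the definition one has $A(\rho)\ge (n+2)\ln\rho-\ln A_{n+2}$ for all $\rho>0$, so that for any fixed $h>0$ the function
$$
g(\rho):=e^{-A(h\rho)}(1+\rho)^{n+1},\qquad \rho\ge 0,
$$
satisfies $g(\rho)\le A_{n+2}\,h^{-(n+2)}\,\rho^{-(n+2)}(1+\rho)^{n+1}$ and hence $g(\rho)\to 0$ as $\rho\to\infty$. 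The point is then to write $e^{-A(h\rho)}=g(\rho)\,(1+\rho)^{-n-1}$, with the first factor uniformly small for large $\rho$ and the second factor summable against the point configuration.

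First I would set $C:=\sup_{\mu\in\Lambda}\sum_{\lambda\in\Lambda}(1+|y_{\lambda}-y_{\mu}|)^{-n-1}$; this is finite by Lemma \ref{lemma-for-familyof-points-b} applied with $\varepsilon=1$, and $C\ge 1$ since the term $\lambda=\mu$ contributes $1$ (if $\Lambda=\emptyset$ there is nothing to prove). Given $h,\varepsilon>0$, using $g(\rho)\to 0$ I would choose $R>0$ such that $g(\rho)\le\varepsilon/C$ for every $\rho>R$. Then for each $\mu\in\Lambda$ and each $\lambda\in\Lambda$ with $|y_{\lambda}-y_{\mu}|>R$,
$$
e^{-A(h|y_{\lambda}-y_{\mu}|)}=g(|y_{\lambda}-y_{\mu}|)\,(1+|y_{\lambda}-y_{\mu}|)^{-n-1}\le\frac{\varepsilon}{C}\,(1+|y_{\lambda}-y_{\mu}|)^{-n-1},
$$
and summing over $\{\lambda\in\Lambda\,|\,|y_{\lambda}-y_{\mu}|>R\}$ and invoking the definition of $C$ gives
$$
\sum_{\{\lambda\in\Lambda\,|\,|y_{\lambda}-y_{\mu}|>R\}}e^{-A(h|y_{\lambda}-y_{\mu}|)}\le\frac{\varepsilon}{C}\sum_{\lambda\in\Lambda}(1+|y_{\lambda}-y_{\mu}|)^{-n-1}\le\varepsilon,
$$
with a bound independent of $\mu$; taking the supremum over $\mu\in\Lambda$ yields the claim.

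I do not expect a genuine obstacle here: the whole argument is just a combination of Lemma \ref{lemma-for-familyof-points-b} with the standard growth of $A(\cdot)$. The only point that requires a little attention is the uniformity in $\mu$, which is automatic since $C$ is itself a supremum over $\mu$ and the radius $R$ depends only on $h$, $\varepsilon$ and $C$. (Incidentally, condition $(3)$ of Definition \ref{def-bupuultr-for-aml} is not actually used in this proof, nor in Lemma \ref{lemma-for-familyof-points-b}; it is kept in the hypotheses only because the lemma will be applied to point families coming from genuine UCPUs.)
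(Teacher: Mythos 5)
Your proof is correct, but it follows a genuinely different route from the paper's. The paper does not pass through Lemma \ref{lemma-for-familyof-points-b} at all: it repeats, for the exponential weight itself, the same averaging device used there, writing each term $e^{-A(h|y_{\lambda}-y_{\mu}|)}$ as $\frac{1}{|K|}\int_{y_{\lambda}+K}e^{-A(h|y_{\lambda}-y_{\mu}|)}dx$ for a fixed ball $K$ (whose radius is taken, via condition $(3)$, so that the translates $y_{\lambda}+K$ cover $\RR^n$), using $e^{-A(h|y_{\lambda}-y_{\mu}|)}\leq 2C\,e^{-A(h|x-y_{\mu}|/2)}$ for $x\in y_{\lambda}+K$ with $C=\sup_{x\in K}e^{A(h|x|)}$, controlling the overlaps by the constant $C_K$ of condition $(2)$, and then choosing $R$ from the integrability of $e^{-A(h|\cdot|/2)}$ over $\{|x|\geq R-k\}$. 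You instead use the conclusion of Lemma \ref{lemma-for-familyof-points-b} as a black box and combine it with the elementary pointwise bound $A(\rho)\geq(n+2)\ln\rho-\ln A_{n+2}$ (valid directly from the definition of the associated function), splitting $e^{-A(h\rho)}$ into a factor that is uniformly small for $\rho>R$ times the summable factor $(1+\rho)^{-n-1}$. Your argument is shorter, avoids redoing the integral comparison, and, as you correctly observe, only needs condition $(2)$ (condition $(3)$ enters the paper's proof merely in fixing the radius of $K$); the paper's argument is self-contained and works directly with the sub-exponential weight, which is the quantity reused later in the article. The uniformity in $\mu$ is handled correctly in your version, since $C$ is itself a supremum over $\mu$ and $R$ depends only on $h$, $\varepsilon$ and $C$.
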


\begin{proof} Set $K:=\{x\in\RR^n\,|\, |x|\leq k\}$ where $k\in\ZZ_+$ is chosen large enough so that $\RR^n=\cup_{\lambda\in\Lambda} (y_{\lambda}+K)$. Let $C_K\geq 1$ be the constant from the condition $(2)$ in Definition \ref{def-bupuultr-for-aml} that corresponds to $K$. Set $C:=\sup_{x\in K}e^{A(h|x|)}$. Pick $R\geq k+1$ such that
$$
\int_{|x|\geq R-k}e^{-A(h|x|/2)}dx\leq \varepsilon |K|/(2CC_K).
$$
For each $\mu\in\Lambda$, denote $Q_{\mu}:=\{\lambda\in\Lambda\,|\, |y_{\lambda}-y_{\mu}|> R\}$ and estimate as follows
\begin{align*}
\sum_{\lambda\in Q_{\mu}} e^{-A(h|y_{\lambda}-y_{\mu}|)}&=\frac{1}{|K|} \sum_{\lambda\in Q_{\mu}} \int_{y_{\lambda}+K}e^{-A(h|y_{\lambda}-y_{\mu}|)}dx\leq \frac{2C}{|K|} \sum_{\lambda\in Q_{\mu}} \int_{y_{\lambda}+K}e^{-A(h|y_{\mu}-x|/2)}dx\\
&=\frac{2C}{|K|} \int_{\RR^n}e^{-A(h|y_{\mu}-x|/2)}\sum_{\lambda\in Q_{\mu}}\mathbf{1}_{y_{\lambda}+K}(x)dx.
\end{align*}
Notice that $\sum_{\lambda\in Q_{\mu}}\mathbf{1}_{y_{\lambda}+K}(x)\leq C_K$, for all $x\in\RR^n$, and $\sum_{\lambda\in Q_{\mu}}\mathbf{1}_{y_{\lambda}+K}(x)=0$ if $|x-y_{\mu}|\leq R-k$. Consequently,
\begin{align*}
\sum_{\lambda\in Q_{\mu}} e^{-A(h|y_{\lambda}-y_{\mu}|)}\leq\frac{2CC_K}{|K|} \int_{|x-y_{\mu}|\geq R-k}e^{-A(h|y_{\mu}-x|/2)}dx\leq \varepsilon.
\end{align*}
\end{proof}

\begin{proof}[Proof of Theorem \ref{the-g-voaml}] We divide the proof in three steps.\\
\indent STEP 1: The spaces $W(E,L^p_{\eta})$, $1\leq p\leq\infty$, and $W(E,\mathcal{C}_{\eta,0})$ are continuously included into $W_{\Psi}(E,L^p_{\eta})$, $1\leq p\leq \infty$, and $W_{\Psi}(E,\mathcal{C}_{\eta,0})$ respectively.\\
\indent Pick $R>0$ such that $\RR^n=\cup_{\lambda\in\Lambda}(y_{\lambda}+K)$ with $K:=\{x\in\RR^n\,|\, |x|\leq R\}$. Denote by $C_K\geq 1$ the constant from the condition $(2)$ on the UCPU $\Psi$ for the set $K$. Notice that $B:=\{T_rT_{-y_{\lambda}}\psi_{\lambda}\,|\, r\in K,\, \lambda\in\Lambda\}$ is a bounded subset of $\SSS^*_{\dagger}(\RR^n)$. For $f\in W(E,L^p_{\eta})$ and $\lambda\in\Lambda$, we have
\begin{align}
\|f\psi_{\lambda}\|_E\eta(y_{\lambda})&\leq C_1\int_{y_{\lambda}+K} \|fT_x(T_{y_{\lambda}-x}T_{-y_{\lambda}}\psi_{\lambda})\|_E\eta(x)dx\nonumber\\
&\leq C_1\int_{y_{\lambda}+K}\sup_{\chi\in B}\|fT_x\chi\|_E\eta(x)dx.\label{est-for-part-partit}
\end{align}
Consequently,
\begin{align*}
\left(\sum_{\lambda\in\Lambda}\|f\psi_{\lambda}\|_E^p\eta(y_{\lambda})^p\right)^{1/p} &\leq C_2\left(\sum_{\lambda\in\Lambda}\int_{y_{\lambda}+K}\sup_{\chi\in B}\|fT_x\chi\|_E^p\eta(x)^pdx\right)^{1/p}\\
&\leq C_2\left(\int_{\RR^n}\sup_{\chi\in B}\|fT_x\chi\|_E^p\eta(x)^p\sum_{\lambda\in\Lambda}\mathbf{1}_{y_{\lambda}+K} (x)dx\right)^{1/p}\\
&\leq C_2C_K^{1/p}\|f\|_{p,B,\eta},
\end{align*}
which proves the claim for $W_{\Psi}(E,L^p_{\eta})$, $1\leq p< \infty$, in view of Proposition \ref{lemma-amalg-space-alter-dfnts}. The claim for $W_{\Psi}(E,L^{\infty}_{\eta})$ and $W_{\Psi}(E,\mathcal{C}_{\eta,0})$ follows from \eqref{est-for-part-partit} and Proposition \ref{lemma-amalg-space-alter-dfnts}.\\
\indent STEP 2: The spaces $W(E,L^p_{\eta})$, $1\leq p\leq \infty$, and $W(E,\mathcal{C}_{\eta,0})$ are topologically imbedded into $W_{\Psi}(E,L^p_{\eta})$, $1\leq p\leq \infty$, and $W_{\Psi}(E,\mathcal{C}_{\eta,0})$ respectively.\\
\indent In view of STEP 1, it suffices to show that the topologies induced on $W(E,L^p_{\eta})$, $1\leq p\leq \infty$, and $W(E,\mathcal{C}_{\eta,0})$ by $W_{\Psi}(E,L^p_{\eta})$, $1\leq p\leq \infty$, and $W_{\Psi}(E,\mathcal{C}_{\eta,0})$ respectively are finer than their initial topologies. In the Beurling case, there are $C_1,\tau_1>0$ such that $\eta(x+y)\leq C_1\eta(x)e^{A(\tau_1|y|)}$, $x,y\in\RR^n$, and in the Roumieu case we pick any $\tau_1>0$ with the corresponding $C_1>0$ for which this inequality holds true. Let $K$ and $R,C_K\geq 1$ be as in the proof of STEP 1. Pick $\chi\in\SSS^*_{\dagger}(\RR^n)\backslash\{0\}$ and set $\chi_0:=\chi\chi\in\SSS^*_{\dagger}(\RR^n)\backslash\{0\}$. In view of Proposition \ref{lemma-amalg-space-alter-dfnts}, there exists $C'_1\geq 1$ such that $\|f\|_{p,\chi,\eta}\leq C'_1\|f\|_{p,\chi_0,\eta}$, for all $f\in W(E,L^p_{\eta})$. Lemma \ref{est-partial-growt-fourbeuralg} yields the existence of $\tilde{C}\geq 1$ and $\tilde{h}>0$ such that
$$
\sup_{x\in y_{\mu}+K}\|\psi_{\lambda}T_x\chi\|_{\mathcal{F}L^1_{\nu_E}}\leq \tilde{C}e^{-A(\tilde{h}|y_{\lambda}-y_{\mu}|)},\quad \mbox{for all}\,\, \lambda,\mu\in\Lambda
$$
(in the Beurling case, pick any $\tilde{h}>0$ with the corresponding $\tilde{C}\geq 1$). In view of Lemma \ref{lemma-est-sum-of-exponsmall}, there exists $R_1\geq 1$ such that
$$
\sup_{\mu\in\Lambda}\sum_{\{\lambda\in\Lambda\,|\,|y_{\lambda}-y_{\mu}|>R_1\}} e^{-A(\tilde{h}|y_{\lambda}-y_{\mu}|)}\leq (4\tilde{C}C'_1C_K)^{-1}.
$$
For each $\mu\in\Lambda$, define $S_{\mu}\subseteq \Lambda$ by
\begin{equation}\label{defin-of-smu}
S_{\mu}:=\{\lambda\in\Lambda\,|\, |y_{\lambda}-y_{\mu}|\leq R_1\}.
\end{equation}
Notice that there exists $k_0\in\ZZ_+$ such that $|S_{\mu}|\leq k_0$, $\forall \mu\in\Lambda$ (because of the condition (2) in Definition \ref{def-bupuultr-for-aml}). Assume first $1\leq p<\infty$ and let $f\in W(E,L^p_{\eta})$. We estimate as follows
\begin{align}\label{bound-for-f-int-sumnorm}
\|f\|_{p,\chi_0,\eta}^p\leq \int_{\RR^n}\left(\sum_{\lambda\in\Lambda}\|f\psi_{\lambda}T_x\chi_0\|_E\eta(x)\right)^pdx \leq \sum_{\mu\in\Lambda}\int_{y_{\mu}+K} \left(\sum_{\lambda\in\Lambda}\|f\psi_{\lambda}T_x\chi_0\|_E\eta(x)\right)^pdx.
\end{align}
For $\mu\in\Lambda$ and $x\in y_{\mu}+K$ we have
\begin{multline*}
\left(\sum_{\lambda\in\Lambda}\|f\psi_{\lambda}T_x\chi_0\|_E\eta(x)\right)^p\\
\leq 2^p \left(\sum_{\lambda\in S_{\mu}}\|f\psi_{\lambda}T_x\chi_0\|_E\eta(x)\right)^p +2^p \left(\sum_{\lambda\in \Lambda\backslash S_{\mu}} \|f\psi_{\lambda}T_x\chi_0\|_E\eta(x)\right)^p\leq I_1+I_2,
\end{multline*}
with
$$
I_1:=(2k_0)^p\|\chi_0\|_{\mathcal{F}L^1_{\nu_E}}^p \sum_{\lambda\in S_{\mu}}\|f\psi_{\lambda}\|_E^p\eta(x)^p,\quad I_2:=2^p \left(\sum_{\lambda\in \Lambda\backslash S_{\mu}}\|f\psi_{\lambda}T_x\chi_0\|_E\eta(x)\right)^p.
$$
We estimate $I_1$ as follows
$$
I_1\leq (2k_0C_1)^pe^{pA(\tau_1R)}\|\chi_0\|_{\mathcal{F}L^1_{\nu_E}}^p \sum_{\lambda\in S_{\mu}}\|f\psi_{\lambda}\|_E^p\eta(y_{\mu})^p\\
\leq C_2\sum_{\lambda\in S_{\mu}}\|f\psi_{\lambda}\|_E^p\eta(y_{\lambda})^p,
$$
with $C_2:=(2k_0C_1^2)^pe^{pA(\tau_1R)}e^{pA(\tau_1R_1)}\|\chi_0\|_{\mathcal{F}L^1_{\nu_E}}^p$ (notice that $C_2$ does not depend on $f$). To estimate $I_2$, we proceed as follows
\begin{align*}
I_2&\leq 2^p \|fT_x\chi\|_E^p\eta(x)^p\left(\sum_{\lambda\in\Lambda\backslash S_{\mu}}\|\psi_{\lambda}T_x\chi\|_{\mathcal{F}L^1_{\nu_E}}\right)^p\\
&\leq 2^p\tilde{C}^p \|fT_x\chi\|_E^p\eta(x)^p \left(\sum_{\lambda\in\Lambda\backslash S_{\mu}} e^{-A(\tilde{h}|y_{\lambda}-y_{\mu}|)}\right)^p\leq (2C'_1C_K)^{-p}\|fT_x\chi\|_E^p\eta(x)^p.
\end{align*}
Plugging these bounds in \eqref{bound-for-f-int-sumnorm}, we infer
\begin{align*}
\|f\|_{p,\chi_0,\eta}^p&\leq C_2|K| \sum_{\mu\in\Lambda}\sum_{\lambda\in S_{\mu}}\|f\psi_{\lambda}\|_E^p\eta(y_{\lambda})^p+(2C'_1C_K)^{-p} \sum_{\mu\in\Lambda}\int_{y_{\mu}+K} \|fT_x\chi\|_E^p\eta(x)^pdx\\
&\leq C_2|K| \sum_{\lambda\in\Lambda}\|f\psi_{\lambda}\|_E^p\eta(y_{\lambda})^p \sum_{\mu\in\Lambda}\mathbf{1}_{S_{\mu}}(\lambda)\\
&{}\quad+(2C'_1C_K)^{-p}\int_{\RR^n} \|fT_x\chi\|_E^p\eta(x)^p\sum_{\mu\in\Lambda}\mathbf{1}_{y_{\mu}+K}(x)dx\\
&\leq C_2C_3|K| \sum_{\lambda\in\Lambda}\|f\psi_{\lambda}\|_E^p\eta(y_{\lambda})^p +2^{-p}\|f\|_{p,\chi_0,\eta}^p,
\end{align*}
where we denoted by $C_3$ the constant from the condition $(2)$ on the UCPU $\Psi$ for the compact set $\{x\in\RR^n\,|\, |x|\leq R_1\}$. This gives $\|f\|_{p,\chi_0,\eta}\leq 2(C_2C_3|K|)^{1/p}\|f\|_{p,\Psi,\eta}$. Since the constants $C_2$, $C_3$ and $R$ do not depend on $f$, this proves the claim when $1\leq p<\infty$. The proof for the case $p=\infty$ is analogous and we omit it. The latter immediately implies the claim in STEP 2 for the space $W(E,\mathcal{C}_{\eta,0})$ since $W(E,\mathcal{C}_{\eta,0})$ and $W_{\Psi}(E,\mathcal{C}_{\eta,0})$ are topologically imbedded into $W(E,L^{\infty}_{\eta})$ and $W_{\Psi}(E,L^{\infty}_{\eta})$ respectively.\\
\indent STEP 3: The spaces $W(E,L^p_{\eta})$, $1\leq p\leq \infty$, and $W(E,\mathcal{C}_{\eta,0})$ are topologically isomorphic to $W_{\Psi}(E,L^p_{\eta})$, $1\leq p\leq \infty$, and $W_{\Psi}(E,\mathcal{C}_{\eta,0})$ respectively.\\
\indent In view of STEP 2, it suffices to show that $W_{\Psi}(E,L^p_{\eta})\subseteq W(E,L^p_{\eta})$, $1\leq p\leq \infty$, and $W_{\Psi}(E,\mathcal{C}_{\eta,0})\subseteq W(E,\mathcal{C}_{\eta,0})$. Because of Proposition \ref{facto-s-mul}, for each $\chi\in\SSS^*_{\dagger}(\RR^n)$, the mappings $E_{\operatorname{loc}}\rightarrow W(E,L^p_{\eta})$, $f\mapsto f\chi$, and $E_{\operatorname{loc}}\rightarrow W(E,\mathcal{C}_{\eta,0})$, $f\mapsto f\chi$, are well-defined and continuous. Pick $\phi\in\SSS^{\dagger}_*(\RR^n)$ such that $\int_{\RR^n}\phi(x)dx=1$ and set $\phi_j(x):=j^n\phi(jx)$, $x\in\RR^n$, $j\in\ZZ_+$. Define $\varphi_j:=\mathcal{F}\phi_j\in\SSS^*_{\dagger}(\RR^n)\backslash\{0\}$, $j\in\ZZ_+$; clearly, $\sup_{j\in\ZZ_+}\|\varphi_j\|_{\mathcal{F}L^1_{\nu_E}}<\infty$. Let $f\in W_{\Psi}(E,L^p_{\eta})$. Fix $\chi\in\SSS^*_{\dagger}(\RR^n)\backslash\{0\}$. In view of STEP 2, there exists $C\geq 1$ such that $\|f\varphi_j\|_{p,\chi,\eta}\leq C\|f\varphi_j\|_{p,\Psi,\eta}$, for all $j\in\ZZ_+$. For each fixed $x\in\RR^n$, $\varphi_jT_x\chi\rightarrow T_x\chi$ in $\SSS^*_{\dagger}(\RR^n)$. Hence, when $p\in[1,\infty)$, Lemma \ref{lemma-reg-e-loc-proper} $(ii)$ together with Fatou's lemma imply
$$
\|f\|_{p,\chi,\eta}^p\leq \liminf_{j\rightarrow \infty}\|f\varphi_j\|_{p,\chi,\eta}^p\leq C^p(\sup_{j\in\ZZ_+}\|\varphi_j\|_{\mathcal{F}L^1_{\nu_E}})^p\|f\|_{p,\Psi,\eta}^p,
$$
and hence $f\in W(E,L^p_{\eta})$. The proof in the case $p=\infty$ is analogous and we omit it. It remains to prove the claim for $W_{\Psi}(E,\mathcal{C}_{\eta,0})$. We may assume that $\eta$ is continuous. Let $\chi$, $\chi_0$, $K$, $R$, $\tau_1$, $C_1$, $\tilde{C}$ and $\tilde{h}$ be as in the proof of STEP 2. Let $f\in W_{\Psi}(E,\mathcal{C}_{\eta,0})\backslash\{0\}$; the above implies that $\|f\|_{\infty,\chi,\eta}<\infty$. Let $\varepsilon>0$ be arbitrary but fixed. Lemma \ref{lemma-est-sum-of-exponsmall} implies that there exists $R_1\geq 1$ such that
$$
\sup_{\mu\in\Lambda}\sum_{\{\lambda\in\Lambda\,|\,|y_{\lambda}-y_{\mu}|>R_1\}} e^{-A(\tilde{h}|y_{\lambda}-y_{\mu}|)}\leq \varepsilon/(2\tilde{C}\|f\|_{\infty,\chi,\eta}).
$$
For $\mu\in\Lambda$, set $S_{\mu}:=\{\lambda\in\Lambda\,|\, |y_{\lambda}-y_{\mu}|\leq R_1\}$. There is $k_0\in\ZZ_+$ such that $|S_{\mu}|\leq k_0$, for all $\mu\in\Lambda$. There exists a finite $\Lambda_0\subseteq \Lambda$ such that
$$
\|f\psi_{\lambda}\|_E\eta(y_{\lambda})\leq (2k_0C_1^2e^{A(\tau_1R)}e^{A(\tau_1 R_1)} \|\chi_0\|_{\mathcal{F}L^1_{\nu_E}})^{-1}\varepsilon,\quad \mbox{for all}\,\, \lambda\in\Lambda\backslash \Lambda_0.
$$
Pick $R_0\geq 1$ such that $|y_{\lambda}|\leq R_0$, for all $\lambda\in\Lambda_0$. Let $x\in\RR^n$ be such that $|x|\geq R_0+R+R_1+1$. There exists $\mu\in\Lambda$ such that $x\in y_{\mu}+K$. We have
\begin{align}
\|fT_x\chi_0\|_E\eta(x)&\leq \sum_{\lambda\in\Lambda}\|f\psi_{\lambda}T_x\chi_0\|_E\eta(x)\nonumber\\
&\leq \|\chi_0\|_{\mathcal{F}L^1_{\nu_E}}\sum_{\lambda\in S_{\mu}}\|f\psi_{\lambda}\|_E\eta(x) +\|f\|_{\infty,\chi,\eta}\sum_{\lambda\in\Lambda\backslash S_{\mu}}\|\psi_{\lambda}T_x\chi\|_{\mathcal{F}L^1_{\nu_E}}\nonumber\\
&\leq C_1e^{A(\tau_1R)}\|\chi_0\|_{\mathcal{F}L^1_{\nu_E}}\sum_{\lambda\in S_{\mu}}\|f\psi_{\lambda}\|_E\eta(y_{\mu}) +\varepsilon/2\nonumber\\
&\leq C_1^2e^{A(\tau_1R)}e^{A(\tau_1R_1)} \|\chi_0\|_{\mathcal{F}L^1_{\nu_E}}\sum_{\lambda\in S_{\mu}}\|f\psi_{\lambda}\|_E\eta(y_{\lambda}) +\varepsilon/2.\label{est-for-nor-for-far-x}
\end{align}
We claim that $S_{\mu}\subseteq \Lambda\backslash\Lambda_0$. To verify this, assume that there is $\lambda\in S_{\mu}$ such that $\lambda\in\Lambda_0$. Then
$$
R_0+R+R_1+1\leq |x|\leq |x-y_{\mu}|+|y_{\mu}-y_{\lambda}|+|y_{\lambda}|\leq R+R_1+R_0,
$$
which is a contradiction. Now, \eqref{est-for-nor-for-far-x} implies $\|fT_x\chi_0\|_E\eta(x)\leq \varepsilon$ which completes the proof of STEP 3, which, in turn, verifies the claim in the theorem.
\end{proof}

\begin{remark}
By employing the same technique, one can show Theorem \ref{the-g-voaml} when $E$ is a (D)TMIB space of distributions, $\eta$ a polynomially bounded weight and $\Psi=\{\psi_{\lambda}\}_{\lambda\in\Lambda}$ a UCPU for $\SSS(\RR^n)$ as in Remark \ref{bupufor-s-dis} (cf. Remark \ref{rem-amalgam-for-dis-withs-fun}).
\end{remark}

\section{Duals of amalgam spaces and complex interpolation}\label{dual amalgam}

\subsection{Duals of amalgam spaces}
In this subsection we identify the duals of most of the amalgam spaces we defined above. We start by showing that, when $E$ is a TMIB and $p<\infty$, the amalgam spaces are also TMIB spaces.

\begin{proposition}\label{tmib}
Let $E$ be TMIB or a DTMIB space of class $*-\dagger$, $\eta$ a weight of class $\dagger$ and $1\leq p_1\leq p_2<\infty$. Then the following continuous inclusions hold true:
$$
\SSS^*_{\dagger}(\RR^n)\rightarrow W(E, L^{p_1}_{\eta})\rightarrow W(E,L^{p_2}_{\eta}) \rightarrow W(E,\mathcal{C}_{\eta,0})\rightarrow W(E,L^{\infty}_{\eta})\rightarrow E_{\operatorname{loc}}\rightarrow \SSS'^*_{\dagger}(\RR^n).
$$
Furthermore, if $E$ is a TMIB space of class $*-\dagger$ then $W(E,L^p_{\eta})$, $1\leq p<\infty$, and $W(E,\mathcal{C}_{\eta,0})$ are also TMIB spaces of class $*-\dagger$.
\end{proposition}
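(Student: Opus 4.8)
The proof naturally divides into two parts: the chain of inclusions, and the verification that $W(E,L^p_\eta)$ (for $p<\infty$) and $W(E,\mathcal{C}_{\eta,0})$ satisfy the three defining conditions of a TMIB space; the density required in condition (I) is the only genuinely delicate point.

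\emph{The chain.} For $1\le p_1\le p_2<\infty$ the inclusions $W(E,L^{p_1}_\eta)\to W(E,L^{p_2}_\eta)\to W(E,\mathcal{C}_{\eta,0})$ follow immediately from the discrete characterisation of Theorem~\ref{the-g-voaml}, since $\ell^{p_1}(\Lambda)\subseteq\ell^{p_2}(\Lambda)\subseteq c_0(\Lambda)$ with non-increasing norms; $W(E,\mathcal{C}_{\eta,0})\to W(E,L^\infty_\eta)\to E_{\operatorname{loc}}\to\SSS'^*_\dagger(\RR^n)$ is the content of Proposition~\ref{lemma-amalg-space-alter-dfnts}$(i)$ and Lemma~\ref{lemma-locspa-continc}. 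There remains $\SSS^*_\dagger(\RR^n)\to W(E,L^{p_1}_\eta)$, which I would prove via the discrete norm~\eqref{norm-amalgam-space-ultrad} attached to a fixed UCPU $\{\psi_\lambda\}_{\lambda\in\Lambda}$: combining the pointwise bounds on $\varphi\in\SSS^*_\dagger(\RR^n)$ and on $\psi_\lambda$ (centred at $y_\lambda$) with $|y_\lambda|\le|x|+|x-y_\lambda|$ and the routine inequalities $e^{A(\rho+\sigma)}\le2e^{A(2\rho)}e^{A(2\sigma)}$, $e^{2A(\rho)}\le c_0e^{A(H\rho)}$, $M_\beta M_{\alpha-\beta}\le M_\alpha$, one obtains $\|\varphi\psi_\lambda\|_E\le C\rho_0(\varphi)\,e^{-A(\tilde h|y_\lambda|)}$ for some continuous seminorm $\rho_0$ on $\SSS^*_\dagger(\RR^n)$, with $\tilde h$ arbitrarily large in the Beurling case and a fixed positive constant in the Roumieu case. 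Since $\eta(y_\lambda)\le Ce^{A(\tau|y_\lambda|)}$ (with $\tau$ arbitrarily small in the Roumieu case), choosing $\tilde h$ large and invoking Lemma~\ref{lemma-est-sum-of-exponsmall} (or Lemma~\ref{lemma-for-familyof-points-b}) gives $\sum_\lambda\|\varphi\psi_\lambda\|_E^{p_1}\eta(y_\lambda)^{p_1}\le C'\rho_0(\varphi)^{p_1}$, which is the desired continuous inclusion.

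\emph{Conditions (II) and (III).} For $f\in W(E,L^p_\eta)$ and a window $\chi$ one uses $(T_xf)T_y\chi=T_x(fT_{y-x}\chi)$ and $(M_\xi f)T_y\chi=M_\xi(fT_y\chi)$, whence $\|(T_xf)T_y\chi\|_E\le\omega_E(x)\|fT_{y-x}\chi\|_E$ and $\|(M_\xi f)T_y\chi\|_E\le\nu_E(-\xi)\|fT_y\chi\|_E$. Integrating against $\eta(y)^p$, substituting $y\mapsto y+x$, and using $\eta(y+x)\le C\eta(y)e^{A(\tau|x|)}$ together with the bounds~\eqref{omega}, shows that $W(E,L^p_\eta)$ is invariant under $T_x$ and $M_\xi$, with $\|T_x\|_{\mathcal{L}_b(W(E,L^p_\eta))}\le Ce^{A(\tau|x|)}$ and $\|M_{-\xi}\|_{\mathcal{L}_b(W(E,L^p_\eta))}\le Ce^{M(\tau|\xi|)}$; in the Roumieu case $\tau>0$ may be taken arbitrary, since $\omega_E$, $\nu_E$ and $\eta$ all have that property. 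The same computation applies verbatim to $W(E,\mathcal{C}_{\eta,0})$, the decay at infinity being preserved because $\eta(y)/\eta(y-x)$ is bounded in $y$ for each fixed $x$. This establishes (II) and (III) (and the required weight bounds).

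\emph{Condition (I) and the main obstacle.} The continuity in $\SSS^*_\dagger(\RR^n)\hookrightarrow W\hookrightarrow\SSS'^*_\dagger(\RR^n)$ is the chain, and density of $W$ in $\SSS'^*_\dagger(\RR^n)$ follows from that of $\SSS^*_\dagger(\RR^n)$; the density of $\SSS^*_\dagger(\RR^n)$ in $W$ is the heart of the matter, the difficulty being exactly the one stressed in the introduction — in the (quasi)analytic regime there are no compactly supported test functions, so the classical ``truncate in space, then mollify'' is unavailable. The plan is a two-stage approximation. First I would show that translation is strongly continuous on $W$: the pointwise-in-$y$ convergence $\|T_x(fT_{y-x}\chi)-fT_y\chi\|_E\to0$ as $x\to0$ comes from Lemma~\ref{lemma-reg-e-loc-proper}$(iii)$ and the strong continuity of translation on the TMIB $E$, while $y\mapsto\bigl(\sup_{|x|\le1}\|fT_{y-x}\chi\|_E\bigr)^p\eta(y)^p$ is an integrable majorant precisely because it is the integrand of the equivalent norm~\eqref{norm-lpama-boundsubset} of Proposition~\ref{lemma-amalg-space-alter-dfnts}$(ii)$ for $B=\{T_r\chi:|r|\le1\}$, so dominated convergence yields $x\mapsto T_xf$ continuous (for $W(E,\mathcal{C}_{\eta,0})$ one splits $|y|\le R$ and $|y|>R$ and uses uniform continuity of $y\mapsto fT_y\chi$ on compacta). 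Consequently, $W$ being a Banach convolution module over $L^1_\omega$ for a suitable weight $\omega$ of class $\dagger$ (a Young/Minkowski estimate, with $\phi_j\in\SSS^*_\dagger(\RR^n)$ in that algebra), the $W$-valued Bochner integral $\phi_j*f=\int\phi_j(t)T_tf\,dt$ is legitimate and $\phi_j*f\to f$ in $W$ for an approximate identity $\phi_j(x)=j^n\phi(jx)$, $\int\phi=1$. The functions $g_j:=\phi_j*f$ are ultradifferentiable of controlled growth, since $g_j(x)=\langle f,T_x\check{\phi}_j\rangle$ and translation inflates the Gelfand--Shilov seminorms of $\check{\phi}_j$ only by a factor $e^{A(2h|x|)}$; moreover, as $\check{\phi}_j$ is a dilate, this growth order stays below a fixed value in the Beurling case and tends to $0$ in the Roumieu case. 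Second, $W(E,L^p_\eta)$ being a Banach multiplication module over $\mathcal{F}L^1_{\nu_E}$ (via $(g\cdot f)T_x\chi=g\cdot(fT_x\chi)$), I would multiply by a bounded multiplicative approximate identity $\chi_k:=\mathcal{F}(k^n\rho(k\cdot))\in\SSS^*_\dagger(\RR^n)$, $\int\rho=1$: one has $\sup_k\|\chi_k\|_{\mathcal{F}L^1_{\nu_E}}<\infty$ and $\chi_k\cdot e\to e$ in $E$ for every $e\in E$ (a standard module argument using the density of $\SSS^*_\dagger(\RR^n)$ in the TMIB $E$), so $\chi_k g_j\to g_j$ in $W$ as $k\to\infty$ by dominated convergence. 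For $k$ in a range depending on $j$ the product $\chi_k g_j$ again lies in $\SSS^*_\dagger(\RR^n)$ — always in the Beurling case, because a Beurling test function keeps decay of every order under dilation, and for $k$ not too large relative to $j$ in the Roumieu case, because the growth order of $g_j$ shrinks with $j$ — so a diagonal choice $k=k(j)\to\infty$ gives $\chi_{k(j)}g_j\in\SSS^*_\dagger(\RR^n)$ with $\chi_{k(j)}g_j\to f$ in $W$, completing the density and hence the proof. The hard part is exactly this balancing of the two scales so that the intermediate function returns to $\SSS^*_\dagger(\RR^n)$ while both limits persist in the amalgam norm, which repeatedly uses the weighted-$L^p$ (hence ``integrated decay'') nature of the global component and the equivalent-norm statements of Proposition~\ref{lemma-amalg-space-alter-dfnts}.
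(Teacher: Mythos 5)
Your handling of the inclusion chain, of conditions (II)--(III), and of the two basic convergences (the convolution approximation $\phi_j*f\to f$ via strong continuity of translations on $W$, and the multiplication approximation $\chi_k\cdot g\to g$ via the $\mathcal{F}L^1_{\nu_E}$-module structure) is sound and in fact close to the paper, which also treats everything except the density of $\SSS^*_{\dagger}(\RR^n)$ as routine and obtains the density from the same two ingredients (there: first multiplication by $\varphi_{j}=\mathcal{F}\phi_{j}$ with $\phi\in\SSS^{\dagger}_{*}(\RR^n)$, then convolution with dilates $\psi_{k}$). The genuine gap is in your last step, exactly where you say the hard part lies. In the Roumieu case you claim $\chi_k g_j\in\SSS^*_{\dagger}(\RR^n)$ only for ``$k$ not too large relative to $j$'' and then invoke an unproven diagonal choice $k=k(j)\to\infty$. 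But the size of $k$ needed to make $\|\chi_k g_j-g_j\|_{W}$ small depends on $g_j$ (hence on $j$ and $f$) in an uncontrolled way, while your membership constraint caps $k$ by roughly a multiple of $j$; you give no argument that these two requirements are compatible, so the convergence $\chi_{k(j)}g_j\to f$ is asserted rather than proved. Moreover, the constraint itself stems from too weak a growth estimate: for fixed $j$, $g_j(x)=\langle f,T_x\check{\phi}_j\rangle$ has growth $O(e^{A(\tau|x|)})$ for \emph{every} $\tau>0$ (with $\tau$- and $j$-dependent constants), because in the Roumieu case $f$ is continuous on each step space of the inductive limit; hence $\chi_kg_j\in\SSS^{\{M_p\}}_{\{A_p\}}(\RR^n)$ for all $j,k$ and no balancing is needed at all --- but this is precisely what must be proved, and it is what the paper exploits (in the reverse order: $(f\varphi_{j_0})*\psi_{k_0}\in\SSS^*_{\dagger}(\RR^n)$ for \emph{every} $j_0,k_0$, so two successive $\varepsilon/2$ estimates suffice, with no coupling of scales).

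A second, smaller gap concerns $W(E,\mathcal{C}_{\eta,0})$: you dispatch $\chi_k g\to g$ ``by dominated convergence'', but the norm there is a weighted sup-norm, and pointwise-in-$x$ convergence of $\|(\chi_k g-g)T_x\chi\|_E\eta(x)$ together with an integrable (or bounded) majorant does not yield uniform convergence in $x$. The paper needs an extra argument at this point: outside a large compact set one uses the $\mathcal{C}_{\eta,0}$-smallness of $g$ together with $\sup_k\|\chi_k\|_{\mathcal{F}L^1_{\nu_E}}<\infty$, while on the compact set one uses that $\{gT_x\chi\,|\,x\in K\}$ is compact in $E$ (Lemma \ref{lemma-reg-e-loc-proper}$(iii)$) and that, by the Banach--Steinhaus theorem, the multiplication operators $e\mapsto \chi_k e$ converge to $\operatorname{Id}$ uniformly on precompact subsets of $E$. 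Your argument for $W(E,L^p_{\eta})$, $p<\infty$, is fine, but the $\mathcal{C}_{\eta,0}$ case requires this additional step.
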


\begin{proof} It is straightforward to verify that $\SSS^*_{\dagger}(\RR^n)\subseteq W(E,L^1_{\eta})$ and $W(E,L^{\infty}_{\eta})\subseteq E_{\operatorname{loc}}$ continuously. The rest of the first part follows from Lemma \ref{lemma-locspa-continc}, Theorem \ref{the-g-voaml} and the continuous inclusions $\ell^{p_1}(\Lambda)\subseteq \ell^{p_2}(\Lambda)\subseteq c_0(\Lambda)\subseteq \ell^{\infty}(\Lambda)$, $1\leq p_1\leq p_2<\infty$.\\
\indent Let $E$ be a TMIB. The only non-trivial part of the second claim in the proposition is the density of $\SSS^*_{\dagger}(\RR^n)$ in $W(E,L^p_{\eta})$, $1\leq p<\infty$, and in $W(E,\mathcal{C}_{\eta,0})$. Without loss of generality, we can assume that $\eta$ is continuous. Pick $\phi\in\SSS^{\dagger}_*(\RR^n)$ and $\psi\in\SSS^*_{\dagger}(\RR^n)$ such that $\int_{\RR^n}\phi(x)dx=1$ and $\int_{\RR^n}\psi(x)dx=1$. Set $\psi_j(x):=j^n\psi(jx)$ and $\phi_j(x):=j^n\phi(jx)$, $x\in\RR^n$, $j\in\ZZ_+$. Define $\varphi_j:=\mathcal{F}\phi_j\in\SSS^*_{\dagger}(\RR^n)\backslash\{0\}$, $j\in\ZZ_+$. Then, for each $e\in E$, $e*\psi_j\rightarrow e$ and $e\varphi_j\rightarrow e$ in $E$ (this easily follows from \eqref{integ-form-for-con-mult}). Clearly, $C':=\sup_{j\in\ZZ_+}\|\varphi_j\|_{\mathcal{F}L^1_{\nu_E}}<\infty$. In the Beurling case, there exist $C,\tau>0$ such that
$$
\omega_E(x)\leq Ce^{A(\tau|x|)},\,\, x\in\RR^n,\quad \mbox{and}\quad \eta(x+y)\leq C\eta(x)e^{A(\tau|y|)},\,\, x,y\in\RR^n.
$$
In the Roumieu case, pick $\tau>0$ such that $e^{3A(\tau|\cdot|)}\psi \in L^1(\RR^n)$ and the corresponding $C>0$ in the above inequalities. Let $f\in W(E, L^p_{\eta})$ and $\chi\in\SSS^*_{\dagger}(\RR^n)\backslash\{0\}$. Let $\varepsilon>0$ be arbitrary but fixed. Since
\begin{equation}\label{ine-for-est-domco}
\|(f\varphi_j-f)T_x\chi\|^p_E\leq 2^p\|f\varphi_jT_x\chi\|^p_E+2^p\|fT_x\chi\|^p_E\leq 2^p(C'^p+1)\|fT_x\chi\|^p_E,
\end{equation}
dominated convergence implies that there is $j_0\in\ZZ_+$ such that $\|f\varphi_{j_0}-f\|_{p,\chi,\eta}\leq \varepsilon/2$. In view of \eqref{integ-form-for-con-mult}, we have
\begin{equation}\label{est-for-dif-with-convdens}
\|((f\varphi_{j_0})*\psi_k-f\varphi_{j_0})T_x\chi\|_E
\leq \int_{\RR^n}\|(T_y(f\varphi_{j_0})-f\varphi_{j_0})T_x\chi\|_E |\psi_k(y)|dy.
\end{equation}
The Minkowski integral inequality gives
\begin{multline}\label{ine-for-minbou-for-con}
\|(f\varphi_{j_0})*\psi_k-f\varphi_{j_0}\|_{p,\chi,\eta}\\
\leq \int_{\RR^n}\left(\int_{\RR^n}\|(T_y(f\varphi_{j_0})-f\varphi_{j_0})T_x\chi\|^p_E \eta(x)^pdx\right)^{1/p}|\psi_k(y)|dy.
\end{multline}
We claim that
$$
\int_{\RR^n}\|(T_y(f\varphi_{j_0})-f\varphi_{j_0})T_x\chi\|^p_E \eta(x)^pdx\rightarrow 0,\quad \mbox{as}\,\, y\rightarrow 0.
$$
This follows from dominated convergence since for $y$ in a compact set $K\subseteq \RR^n$ we have
\begin{align}
\|T_y(f\varphi_{j_0})T_x\chi\|^p_E\eta(x)^p&\leq \|f\varphi_{j_0}T_xT_{-y}\chi\|^p_E\omega_E(y)^p\eta(x)^p\nonumber\\
&\leq C_1\eta(x)^p \sup_{y\in K}\|fT_xT_{-y}\chi\|^p_E\label{edst-for-densconvsec}
\end{align}
and the function $x\mapsto \eta(x)^p\sup_{y\in K}\|fT_xT_{-y}\chi\|^p_E$ belongs to $L^1(\RR^n)$ in view of Proposition \ref{lemma-amalg-space-alter-dfnts}. Hence, there exists $\varepsilon_1>0$ such that
$$
\int_{\RR^n}\|(T_y(f\varphi_{j_0})-f\varphi_{j_0})T_x\chi\|^p_E \eta(x)^pdx\leq \frac{\varepsilon^p}{4^p\|\psi\|^p_{L^1(\RR^n)}},\quad \mbox{when}\,\, |y|\leq \varepsilon_1.
$$
Pick $k_0\in\ZZ_+$ such that
\begin{equation}\label{ineq-for-great-bou-dens}
\int_{|y|\geq k_0\varepsilon_1}e^{2A(\tau|y|)}|\psi(y)|dy\leq (4(1+C^2)(1+\|f\varphi_{j_0}\|_{p,\chi,\eta}))^{-1} \varepsilon.
\end{equation}
Then, \eqref{ine-for-minbou-for-con} gives
\begin{align*}
\|&(f\varphi_{j_0})*\psi_{k_0}-f\varphi_{j_0}\|_{p,\chi,\eta}\\
&\leq \frac{\varepsilon}{4} +\int_{|y|\geq\varepsilon_1} \left(\|f\varphi_{j_0}\|_{p,\chi,\eta}+ \omega_E(y) \left(\int_{\RR^n}\|f\varphi_{j_0}T_{x-y}\chi\|^p_E\eta(x)^pdx\right)^{1/p}\right) |\psi_{k_0}(y)|dy\\
&\leq \frac{\varepsilon}{4} +\int_{|y|\geq k_0\varepsilon_1}(\|f\varphi_{j_0}\|_{p,\chi,\eta}+ C^2e^{2A(\tau|y|/k_0)} \|f\varphi_{j_0}\|_{p,\chi,\eta}) |\psi(y)|dy\leq \frac{\varepsilon}{2}.
\end{align*}
Consequently $\|(f\varphi_{j_0})*\psi_{k_0}-f\|_{p,\chi,\eta}\leq \varepsilon$ and the required density follows since $(f\varphi_{j_0})*\psi_{k_0}\in\SSS^*_{\dagger}(\RR^n)$. Assume now that $f\in W(E, \mathcal{C}_{\eta,0})$. Let $C,\tau>0$ be as above and let $\varepsilon>0$ be arbitrary but fixed. In view of \eqref{ine-for-est-domco}, there is a compact subset $K$ of $\RR^n$ such that $\sup_{j\in\ZZ_+}\|(f\varphi_j-f)T_x\chi\|_E\eta(x)\leq \varepsilon/2$, $x\in\RR^n\backslash K$. Let $S_j\in\mathcal{L}(E)$, $j\in\ZZ_+$, be the operator $S_je=e\varphi_j$. Then $S_j\rightarrow \operatorname{Id}$ in the topology of simple convergence and the Banach-Steinhaus theorem implies that the convergence holds in the topology of precompact convergence. Lemma \ref{lemma-reg-e-loc-proper} $(iii)$ implies that $\{fT_x\chi\, |\, x\in K\}$ is a compact subset of $E$ and consequently there is $j_0\in \ZZ_+$ such that $\|(f\varphi_{j_0}-f)T_x\chi\|_E\leq \varepsilon/(2\|\eta\|_{L^{\infty}(K)})$, $x\in K$. Hence, $\|f\varphi_{j_0}-f\|_{\infty,\chi,\eta}\leq \varepsilon/2$. The inequality \eqref{est-for-dif-with-convdens} holds true. In view of \eqref{edst-for-densconvsec} with $p=1$ and Proposition \ref{lemma-amalg-space-alter-dfnts}, there is a compact set $K_1\subseteq \RR^n$ such that
$$
\|(T_y(f\varphi_{j_0})-f\varphi_{j_0})T_x\chi\|_E\eta(x)\leq \varepsilon/(4\|\psi\|_{L^1(\RR^n)}),\quad x\in \RR^n\backslash K_1,\, |y|\leq 1.
$$
Since $\RR^n\rightarrow E$, $y\mapsto T_y(f\varphi_{j_0})$, is continuous, there exists $0<\varepsilon_1\leq 1$ such that
$$
\|T_y(f\varphi_{j_0})-f\varphi_{j_0}\|_E\leq \varepsilon/(4\|\eta\|_{L^{\infty}(K_1)}\|\chi\|_{\mathcal{F}L^1_{\nu_E}} \|\psi\|_{L^1(\RR^n)}),\quad \mbox{when}\,\, |y|\leq \varepsilon_1.
$$
Pick $k_0\in\ZZ_+$ such that \eqref{ineq-for-great-bou-dens} holds true with $p=\infty$. Then, for $x\in K_1$, \eqref{est-for-dif-with-convdens} gives
\begin{align*}
\|&((f\varphi_{j_0})*\psi_{k_0}-f\varphi_{j_0})T_x\chi\|_E\eta(x)\\
&\leq \|\chi\|_{\mathcal{F}L^1_{\nu_E}}\|\eta\|_{L^{\infty}(K_1)} \int_{|y|\leq \varepsilon_1} \|T_y(f\varphi_{j_0})-f\varphi_{j_0}\|_E|\psi_{k_0}(y)|dy\\
&{}\quad+\int_{|y|\geq\varepsilon_1} \left(\|f\varphi_{j_0}\|_{\infty,\chi,\eta}+ \omega_E(y) \|f\varphi_{j_0}T_{x-y}\chi\|_E\eta(x)\right) |\psi_{k_0}(y)|dy\\
&\leq \frac{\varepsilon}{4} +\int_{|y|\geq k_0\varepsilon_1} \left(\|f\varphi_{j_0}\|_{\infty,\chi,\eta}+ C^2e^{2A(\tau|y|/k_0)} \|f\varphi_{j_0}\|_{\infty,\chi,\eta}\right) |\psi(y)|dy\leq \frac{\varepsilon}{2}.
\end{align*}
Similarly, $\|((f\varphi_{j_0})*\psi_{k_0}-f\varphi_{j_0})T_x\chi\|_E \eta(x)\leq \varepsilon/2$ when $x\in\RR^n\backslash K_1$. We conclude $\|(f\varphi_{j_0})*\psi_{k_0}-f\|_{\infty,\chi,\eta}\leq \varepsilon$, which proves the density of $\SSS^*_{\dagger}(\RR^n)$ in $W(E,\mathcal{C}_{\eta,0})$.
\end{proof}

Because of this proposition, when $E$ is a TMIB space of class $*-\dagger$, the duals of $W(E,L^p_{\eta})$, $1\leq p<\infty$, and $W(E,\mathcal{C}_{\eta,0})$ are subspaces of $\SSS'^*_{\dagger}(\RR^n)$. The following result identifies these spaces.

\begin{theorem}\label{duali}
Let $\eta$ be a weight of class $\dagger$ and let $E$ be a TMIB space of class $*-\dagger$. The strong dual $(W(E,L^p_\eta))'_b$, $1\leq p<\infty$, of $W(E,L^p_\eta)$ is equal to $W(E',L^q_{1/\eta})$, $p^{-1}+q^{-1}=1$, with equivalent norms. The strong dual $(W(E,\mathcal{C}_{\eta,0}))'_b$ of $W(E,\mathcal{C}_{\eta,0})$ is equal to $W(E', L^1_{1/\eta})$ with equivalent norms. Consequently, $W(E',L^p_{1/\eta})$, $1\leq p\leq \infty$, is a DTMIB space of class $*-\dagger$.
\end{theorem}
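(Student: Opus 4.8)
The plan is to transport everything to the discrete model of Theorem~\ref{the-g-voaml}, to realise the duality pairing as an ``unsmeared'' version of the reproducing identity already used in the proof of Proposition~\ref{lemma-amalg-space-alter-dfnts}, and to extract surjectivity of the induced map by a Hahn--Banach argument inside a vector-valued sequence space. Fix a UCPU $\Psi=\{\psi_\lambda\}_{\lambda\in\Lambda}$ of class $*-\dagger$ with points $\{y_\lambda\}_{\lambda\in\Lambda}$ (Remark~\ref{rem-bupu-of-class-a-dss}); note that $1/\eta$ is again a weight of class $\dagger$, that $E'$ is a DTMIB space of class $*-\dagger$ with $\nu_{E'}=\nu_E$, and that for a Banach space $F$ the weighted vector-valued sequence spaces $\ell^p_\eta(\Lambda;F)$ and $c_{0,\eta}(\Lambda;F)$ satisfy the isometric identifications $(\ell^p_\eta(\Lambda;F))'=\ell^q_{1/\eta}(\Lambda;F')$, $1\le p<\infty$, and $(c_{0,\eta}(\Lambda;F))'=\ell^1_{1/\eta}(\Lambda;F')$ — here no Radon--Nikod\'ym property is needed because $\Lambda$ carries the counting measure. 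By Theorem~\ref{the-g-voaml} the map $R\colon f\mapsto(f\psi_\lambda)_\lambda$ is a topological embedding of $W(E,L^p_\eta)$ into $\ell^p_\eta(\Lambda;E)$ and of $W(E,\mathcal{C}_{\eta,0})$ into $c_{0,\eta}(\Lambda;E)$ (and likewise with $E'$ in place of $E$), so its range is a closed subspace.

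First I would build the pairing. Fix a window $\psi_0\in\SSS^*_{\dagger}(\RR^n)\setminus\{0\}$ and, for $f$ in $W(E,L^p_\eta)$ or $W(E,\mathcal{C}_{\eta,0})$ and $g$ in $W(E',L^q_{1/\eta})$ or $W(E',L^1_{1/\eta})$, put
$$
\langle f,g\rangle:=\|\psi_0\|_{L^2(\RR^n)}^{-2}\int_{\RR^n}\langle fT_y\psi_0,\,\overline{T_y\psi_0}\,g\rangle_{E,E'}\,dy.
$$
Here $fT_y\psi_0\in E$ and $\overline{T_y\psi_0}\,g\in E'$ (since $g\in E'_{\operatorname{loc}}$), the integrand is continuous in $y$ by Lemma~\ref{lemma-reg-e-loc-proper}$(iii)$, and H\"older's inequality together with the window-independence of the amalgam norm (Proposition~\ref{lemma-amalg-space-alter-dfnts}) gives $|\langle f,g\rangle|\le\|\psi_0\|_{L^2(\RR^n)}^{-2}\|f\|_{p,\psi_0,\eta}\,\|g\|_{q,\overline{\psi_0},1/\eta}$. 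Using $\int_{\RR^n}|\psi_0(\cdot-y)|^2\,dy\equiv\|\psi_0\|_{L^2(\RR^n)}^2$ and Fubini, one checks $\langle f,g\rangle=\langle g,f\rangle_{\SSS'^*_{\dagger},\SSS^*_{\dagger}}$ whenever $f\in\SSS^*_{\dagger}(\RR^n)$. Hence $g\mapsto\langle\,\cdot\,,g\rangle$ is a bounded map $\iota$ from $W(E',L^q_{1/\eta})$ into $(W(E,L^p_\eta))'$ (resp.\ from $W(E',L^1_{1/\eta})$ into $(W(E,\mathcal{C}_{\eta,0}))'$), and it is injective because $\SSS^*_{\dagger}(\RR^n)$ is dense in $W(E,L^p_\eta)$ and in $W(E,\mathcal{C}_{\eta,0})$ by Proposition~\ref{tmib}.

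Next I would show that the reconstruction operator $\Sigma\colon (g'_\lambda)_\lambda\mapsto\sum_\lambda g'_\lambda\psi_\lambda$ maps $\ell^q_{1/\eta}(\Lambda;E')$ boundedly into $W(E',L^q_{1/\eta})$ (and $\ell^1_{1/\eta}(\Lambda;E')$ into $W(E',L^1_{1/\eta})$). For a window $\chi$ one has $\|(g'_\lambda\psi_\lambda)T_x\chi\|_{E'}\le\|g'_\lambda\|_{E'}\,\|\psi_\lambda T_x\chi\|_{\mathcal{F}L^1_{\nu_E}}$, and Lemma~\ref{est-partial-growt-fourbeuralg} bounds the last factor by $\tilde C e^{-A(\tilde h|y_\lambda-y_\mu|)}$ for $x\in y_\mu+K$; replacing $\eta(x)^{-1}$ by a constant multiple of $\eta(y_\mu)^{-1}$ on the compact $y_\mu+K$ and then $\eta(y_\mu)^{-1}$ by a multiple of $\eta(y_\lambda)^{-1}e^{A(\tau|y_\lambda-y_\mu|)}$, the $\ell^q$-norm in $\mu$ is estimated by a Schur test for the kernel $K_{\mu\lambda}:=e^{-A(\tilde h|y_\lambda-y_\mu|)+A(\tau|y_\lambda-y_\mu|)}$ on $\Lambda\times\Lambda$; once $\tilde h\ge H\tau$ one has $K_{\mu\lambda}\le c_0\,e^{-A(\tau|y_\lambda-y_\mu|)}\le C(1+|y_\lambda-y_\mu|)^{-n-1}$, whose row and column sums are finite by Lemma~\ref{lemma-for-familyof-points-b}. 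The inequality $\tilde h\ge H\tau$ is free in the Beurling case (where $\tilde h$ is arbitrary in Lemma~\ref{est-partial-growt-fourbeuralg}) and is arranged in the Roumieu case by choosing the moderation exponent $\tau$ of $1/\eta$ small enough; this is the one genuinely delicate point, and it is exactly where the Roumieu freedom ``for every $\tau>0$ there is $C>0$'' in the definition of a weight of class $\dagger$ is used. The same computation shows $\sum_\lambda g'_\lambda\psi_\lambda$ converges absolutely in $E'_{\operatorname{loc}}$, so $\Sigma$ is well defined and $\|\Sigma((g'_\lambda))\|\lesssim\|(g'_\lambda)\|_{\ell^q_{1/\eta}(\Lambda;E')}$.

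Finally, surjectivity. Given $\varphi\in(W(E,L^p_\eta))'$, the functional $\varphi\circ R^{-1}$ on the closed subspace $R(W(E,L^p_\eta))\subseteq\ell^p_\eta(\Lambda;E)$ extends by Hahn--Banach to some $\tilde\varphi=(g'_\lambda)_\lambda\in(\ell^p_\eta(\Lambda;E))'=\ell^q_{1/\eta}(\Lambda;E')$ with $\|\tilde\varphi\|=\|\varphi\|$. Put $g:=\Sigma((g'_\lambda))\in W(E',L^q_{1/\eta})$, so $\|g\|\lesssim\|\varphi\|$. Then $\varphi(f)=\tilde\varphi(Rf)=\sum_\lambda\langle f\psi_\lambda,g'_\lambda\rangle_{E,E'}$ for all $f\in W(E,L^p_\eta)$, and for $f\in\SSS^*_{\dagger}(\RR^n)$ the right side equals $\sum_\lambda\langle g'_\lambda\psi_\lambda,f\rangle_{\SSS'^*_{\dagger},\SSS^*_{\dagger}}=\langle g,f\rangle_{\SSS'^*_{\dagger},\SSS^*_{\dagger}}=\iota(g)(f)$; since $\varphi$ and $\iota(g)$ are continuous on $W(E,L^p_\eta)$ and agree on the dense subspace $\SSS^*_{\dagger}(\RR^n)$ (Proposition~\ref{tmib}), they coincide. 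Thus $\iota$ is a continuous bijection, hence a topological isomorphism by the open mapping theorem, with equivalence of norms from the estimates above; the case of $W(E,\mathcal{C}_{\eta,0})$ is identical with $c_0,\ell^1$ in place of $\ell^p,\ell^q$. For the last assertion, Proposition~\ref{tmib} says $W(E,L^r_\eta)$ ($1\le r<\infty$) and $W(E,\mathcal{C}_{\eta,0})$ are TMIB spaces of class $*-\dagger$ when $E$ is, so the identities just proved present $W(E',L^q_{1/\eta})$, $1<q\le\infty$, and $W(E',L^1_{1/\eta})$ as strong duals of TMIB spaces, i.e.\ as DTMIB spaces of class $*-\dagger$, covering all exponents in $[1,\infty]$. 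The expected main obstacle is the Schur-type estimate of the third paragraph, which forces the off-diagonal decay furnished by Lemma~\ref{est-partial-growt-fourbeuralg} to dominate the moderation of the weight $1/\eta$; all the rest is the bookkeeping of Theorem~\ref{the-g-voaml}, the reproducing identity, and the elementary duality of weighted vector-valued $\ell^p$ spaces over a countable index set.
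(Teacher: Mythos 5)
Your proposal is correct, and the easy inclusion (the pairing built from the reproducing identity $\|\psi_0\|_{L^2}^{2}fT_x\chi=\int fT_x\chi\,|T_y\psi_0|^2dy$ plus H\"older and window independence) is exactly the paper's argument. Where you genuinely diverge is the hard direction. The paper never leaves the scalar setting: given $f\in(W(E,L^p_\eta))'$ it first shows $f\in E'_{\operatorname{loc}}$, treats $p=1$ by a direct estimate, and for $1<p<\infty$ (and for $\mathcal{C}_{\eta,0}$) picks near-optimizers $\chi_\lambda\in\SSS^*_\dagger$ with $\|\chi_\lambda\|_E\le1$ for each $\|\psi_\lambda f\|_{E'}$, tests $f$ against the finite combinations $\varphi_{\Lambda'}=\sum_{\lambda\in\Lambda'}\langle\psi_\lambda\tilde f,\chi_\lambda\rangle^{q/p}\eta(y_\lambda)^{-q}\psi_\lambda\chi_\lambda$, and bounds $\|\varphi_{\Lambda'}\|_{p,\Psi,\eta}$ via Lemma~\ref{cor-for-dis-betw-two-elebupu} and H\"older to extract the $\ell^q$ bound directly — no Hahn--Banach and no vector-valued duality. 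You instead exploit the retract structure: the analysis map $R f=(f\psi_\lambda)_\lambda$ from Theorem~\ref{the-g-voaml} embeds the amalgam space as a closed subspace of $\ell^p_\eta(\Lambda;E)$ (resp.\ $c_{0,\eta}(\Lambda;E)$), you extend the functional by Hahn--Banach, use the purely atomic duality $(\ell^p_\eta(\Lambda;E))'=\ell^q_{1/\eta}(\Lambda;E')$, $(c_{0,\eta}(\Lambda;E))'=\ell^1_{1/\eta}(\Lambda;E')$ (correct without any Radon--Nikod\'ym hypothesis), and then push the coefficient sequence back with a synthesis operator $\Sigma$ whose boundedness is your Schur-test estimate; the latter is essentially the paper's Lemma~\ref{lemma-retra-foramlps} (there proved on the lattice $\ZZ^n$, used only for interpolation), and your version for a general UCPU via Lemma~\ref{est-partial-growt-fourbeuralg}, Lemma~\ref{lemma-for-familyof-points-b} and the Roumieu choice $\tau\le\tilde h/H$ is sound. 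What each route buys: yours handles $p=1$, $1<p<\infty$ and the $\mathcal{C}_{\eta,0}$ case uniformly and reduces the theorem to soft sequence-space duality at the cost of Hahn--Banach and of proving the synthesis bound for arbitrary UCPUs; the paper's construction is Hahn--Banach-free and self-contained, but needs the separate $p=1$ computation and the bespoke $\varphi_{\Lambda'}$ estimate. The only points to tighten in a write-up are the Fubini/tensor-product justification of $\langle f,g\rangle=\langle g,f\rangle$ for $f\in\SSS^*_\dagger$ (do it as in the paper's identity \eqref{ide-for-e-funwitht}) and an explicit statement that convergence of $\sum_\lambda g'_\lambda\psi_\lambda$ in $E'_{\operatorname{loc}}$ implies convergence in $\SSS'^*_\dagger$, which licenses exchanging the sum with the pairing against test functions.
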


\begin{proof} Proposition \ref{tmib} implies that $(W(E,L^p_{\eta}))'_b$, $1\leq p<\infty$, and $(W(E,\mathcal{C}_{\eta,0}))'_b$ are continuously included into $\SSS'^*_{\dagger}(\RR^n)$. Denote by $1< q\leq \infty$ the H\"older conjugate index to $p$ and let $q=1$ in the case of $(W(E,\mathcal{C}_{\eta,0}))'_b$.\\
\indent Let $f\in W(E',L^q_{1/\eta})$. Pick $\chi_0\in\SSS^*_{\dagger}(\RR^n)\backslash\{0\}$ and set $\chi:=\|\chi_0\|^{-2}_{L^2(\RR^n)}\chi_0\overline{\chi_0}\in \SSS^*_{\dagger}(\RR^n)\backslash\{0\}$; clearly $\int_{\RR^n}\chi(y)dy=1$. Let $\varphi\in\SSS^*_{\dagger}(\RR^n)$ be arbitrary but fixed. The function $\RR^{2n}\rightarrow \CC$, $(x,y)\mapsto \chi(y-x)\varphi(y)$, belongs to $\SSS^*_{\dagger}(\RR^{2n})$. We infer
\begin{align*}
|\langle f,\varphi\rangle|&=|\langle \mathbf{1}_{\RR^n}(x)\otimes f(y), \chi(y-x)\varphi(y)\rangle|=\|\chi_0\|^{-2}_{L^2(\RR^n)}\left|\int_{\RR^n}\langle f T_x\chi_0,\varphi T_x\overline{\chi_0}\rangle dx\right|\\
&\leq\|\chi_0\|^{-2}_{L^2(\RR^n)}\int_{\RR^n}\|f T_x\chi_0\|_{E'}\|\varphi T_x\overline{\chi_0}\|_E dx\leq \|\chi_0\|^{-2}_{L^2(\RR^n)} \|f\|_{q,\chi_0,1/\eta}\|\varphi\|_{p,\overline{\chi_0},\eta};
\end{align*}
of course, $p=\infty$ when $q=1$. In view of Proposition \ref{tmib}, this verifies that $W(E',L^q_{1/\eta})\subseteq (W(E,L^p_{\eta}))'_b$, $1<q\leq \infty$, and $W(E',L^1_{1/\eta})\subseteq (W(E,\mathcal{C}_{\eta,0}))'_b$ continuously.\\
\indent To prove the opposite inclusion, first we show that $(W(E,L^p_{\eta}))'\subseteq E'_{\operatorname{loc}}$, $1\leq p<\infty$, and $(W(E,\mathcal{C}_{\eta,0}))'\subseteq E'_{\operatorname{loc}}$. Let $f\in (W(E,L^p_{\eta}))'$. Let $\chi\in\SSS^*_{\dagger}(\RR^n)\backslash\{0\}$ and take $\|\cdot\|_{p,\chi,\eta}$ as a norm on $W(E,L^p_{\eta})$. For $\varphi,\psi\in\SSS^*_{\dagger}(\RR^d)$, we estimate as follows
\begin{align*}
|\langle \varphi f,\psi\rangle|\leq \|f\|_{(W(E,L^p_{\eta}))'_b} \|\varphi\psi\|_{p,\chi,\eta}\leq \|f\|_{(W(E,L^p_{\eta}))'_b} \|\psi\|_E\left(\int_{\RR^n}\|\varphi T_x\chi\|^p_{\mathcal{F}L^1_{\nu_E}}\eta(x)^pdx\right)^{1/p}.
\end{align*}
The very last integral is finite since $\SSS^*_{\dagger}(\RR^n)\subseteq W(\mathcal{F}L^1_{\nu_E}, L^p_{\eta})$. Hence $\varphi f\in E'$. The proof of $(W(E,\mathcal{C}_{\eta,0}))'_b\subseteq E'_{\operatorname{loc}}$ is analogous and we omit it. When $p=1$, taking $\varphi=T_y\chi$ in the above inequality, we deduce
\begin{align*}
\|fT_y\chi\|_{E'}&\leq \|f\|_{(W(E,L^1_{\eta}))'_b}\int_{\RR^n}\|\chi T_{x-y}\chi\|_{\mathcal{F}L^1_{\nu_E}}\eta(x)dx\\
&\leq C\|f\|_{(W(E,L^1_{\eta}))'_b}\eta(y)\int_{\RR^n}\|\chi T_x\chi\|_{\mathcal{F}L^1_{\nu_E}}e^{A(\tau|x|)}dx,
\end{align*}
with the specific $\tau>0$ coming from the weight $\eta$ in the Beurling case and, in the Roumieu case, we choose $\tau>0$ so that the integral is convergent. This immediately implies that $(W(E,L^1_{\eta}))'_b\subseteq W(E',L^{\infty}_{1/\eta})$ continuously.\\
\indent Assume now $1<p<\infty$. Let $\Psi=\{\psi_{\lambda}\}_{\lambda\in\Lambda}$ be a UCPU of class $*-\dagger$ with points $\{y_{\lambda}\}_{\lambda\in\Lambda}$ and set $C_0:=\sup_{\lambda\in\Lambda}\|\psi_{\lambda}\|_{\mathcal{F}L^1_{\nu_E}}<\infty$ (cf. Remark \ref{boun-bupuforlealge-nor}). Take $\|\cdot\|_{p,\Psi,\eta}$ as a norm on $W(E, L^p_{\eta})$. In the Beurling case, there are $C_1,\tau>0$ such that
\begin{equation}\label{est-forweight-s}
\eta(x+y)\leq C_1\eta(x)e^{A(\tau|y|)},\quad x,y\in\RR^n.
\end{equation}
Set $h:=H\tau$. Hence
\begin{equation}\label{chose-ofh-forconvsss}
e^{A(\tau|x|)}e^{-A(h|x|)}\leq c_0e^{-A(h|x|/H)},\quad x\in\RR^n.
\end{equation}
In view of Lemma \ref{cor-for-dis-betw-two-elebupu}, there is $C>0$ such that
\begin{equation}\label{est-formixpro-cass}
\|\psi_{\lambda}\psi_{\mu}\|_{\mathcal{F}L^1_{\nu_E}}\leq Ce^{-A(h|y_{\lambda}-y_{\mu}|)},\quad \lambda,\mu\in\Lambda.
\end{equation}
In the Roumieu case, Lemma \ref{cor-for-dis-betw-two-elebupu} implies that there are $C,h>0$ such that \eqref{est-formixpro-cass} holds true. Set $\tau:=h/H$ and pick $C_1>0$ so that the bound \eqref{est-forweight-s} is valid. Then the inequality \eqref{chose-ofh-forconvsss} holds true. In view of Lemma \ref{lemma-for-familyof-points-b},
$$
C_2:=\sup_{\mu\in\Lambda}\sum_{\lambda\in\Lambda} e^{-A(h|y_{\lambda}-y_{\mu}|/H^2)}<\infty\quad \mbox{and}\quad C_3:=\sum_{\lambda\in\Lambda} (1+|y_{\lambda}|)^{-n-1}<\infty.
$$
Let $f\in (W(E, L^p_{\eta}))'_b\backslash\{0\}$ and let $\varepsilon\in (0,1)$ be arbitrary but fixed. Set $\tilde{f}:=\|f\|_{(W(E,L^p_{\eta}))'_b}^{-1}f$. For every $\lambda\in\Lambda$, there exists $\chi_{\lambda}\in\SSS^*_{\dagger}(\RR^d)$ satisfying $\|\chi_{\lambda}\|_E\leq 1$, such that $\langle \psi_{\lambda}\tilde{f},\chi_{\lambda}\rangle\geq 0$ and
\begin{equation}\label{est-f-ch-funcc1}
\|\psi_{\lambda}\tilde{f}\|_{E'}\leq \langle \psi_{\lambda}\tilde{f},\chi_{\lambda}\rangle+C_3^{-1}(1+|y_{\lambda}|)^{-n-1} \eta(y_{\lambda})\varepsilon.
\end{equation}
For each finite $\Lambda'\subseteq \Lambda$, define
\begin{equation}\label{fun-def-for-the-est1}
\varphi_{\Lambda'}:=\sum_{\lambda\in\Lambda'}\langle \psi_{\lambda}\tilde{f},\chi_{\lambda}\rangle^{q/p} \eta(y_{\lambda})^{-q} \psi_{\lambda}\chi_{\lambda}\in\SSS^*_{\dagger}(\RR^n).
\end{equation}
Employing $e^{-A(h|x|/H)}\leq c_0e^{-2A(h|x|/H^2)}$, $x\in\RR^n$, and the H\"older inequality, we infer
\begin{align}
\|\varphi_{\Lambda'}\|_{p,\Psi,\eta}^p&\leq\sum_{\mu\in\Lambda}\left(\sum_{\lambda\in \Lambda'} \langle \psi_{\lambda}\tilde{f},\chi_{\lambda}\rangle^{q/p} \eta(y_{\lambda})^{-q}\eta(y_{\mu}) \|\psi_{\lambda}\psi_{\mu}\chi_{\lambda}\|_E\right)^p\nonumber\\
&\leq C^pC_1^p\sum_{\mu\in\Lambda}\left(\sum_{\lambda\in \Lambda'} \langle \psi_{\lambda}\tilde{f},\chi_{\lambda}\rangle^{q/p} \eta(y_{\lambda})^{-q+1}e^{A(\tau|y_{\lambda}-y_{\mu}|)} e^{-A(h|y_{\lambda}-y_{\mu}|)}\right)^p\nonumber\\
&\leq c_0^{2p}C^pC_1^p\sum_{\mu\in\Lambda}\left(\sum_{\lambda\in \Lambda'} \langle \psi_{\lambda}\tilde{f},\chi_{\lambda}\rangle^{q/p} \eta(y_{\lambda})^{-q/p} e^{-2A(h|y_{\lambda}-y_{\mu}|/H^2)}\right)^p\nonumber\\
&\leq  c_0^{2p}C^pC_1^p\sum_{\mu\in\Lambda}\left(\sum_{\lambda\in \Lambda'} \frac{\langle \psi_{\lambda}\tilde{f},\chi_{\lambda}\rangle^q}{\eta(y_{\lambda})^q} e^{-pA(h|y_{\lambda}-y_{\mu}|/H^2)}\right)\left(\sum_{\lambda\in \Lambda'} e^{-qA(h|y_{\lambda}-y_{\mu}|/H^2)}\right)^{p/q}\nonumber\\
&\leq c_0^{2p}C^pC_1^pC_2^p\sum_{\lambda\in \Lambda'} \langle \psi_{\lambda}\tilde{f},\chi_{\lambda}\rangle^q \eta(y_{\lambda})^{-q} \sum_{\mu\in\Lambda}e^{-pA(h|y_{\lambda}-y_{\mu}|/H^2)}\nonumber\\
&\leq c_0^{2p}C^pC_1^pC_2^{p+1}\sum_{\lambda\in \Lambda'} \langle \psi_{\lambda}\tilde{f},\chi_{\lambda}\rangle^q \eta(y_{\lambda})^{-q}.\label{est-for-the-pa-inthno1}
\end{align}
Notice that $\langle \tilde{f},\varphi_{\Lambda'}\rangle=\sum_{\lambda\in\Lambda'}\langle \psi_{\lambda}\tilde{f},\chi_{\lambda}\rangle^q\eta(y_{\lambda})^{-q}$. Since $\langle \tilde{f},\varphi_{\Lambda'}\rangle\leq \|\varphi_{\Lambda'}\|_{p,\Psi,\eta}$ and since $\Lambda'$ is arbitrary, \eqref{est-for-the-pa-inthno1} gives
\begin{align*}
\left(\sum_{\lambda\in\Lambda}\langle\psi_{\lambda}\tilde{f}, \chi_{\lambda}\rangle^q\eta(y_{\lambda})^{-q}\right)^{1/q}\leq \tilde{C},
\end{align*}
with $\tilde{C}:=c_0^2CC_1C_2^{1+1/p}$. In view of \eqref{est-f-ch-funcc1}, we deduce
\begin{align*}
\left(\sum_{\lambda\in\Lambda}\frac{\|\psi_{\lambda}\tilde{f}\|_{E'}^q} {\eta(y_{\lambda})^q}\right)^{1/q}\leq \left(\sum_{\lambda\in\Lambda}\frac{\langle\psi_{\lambda}\tilde{f}, \chi_{\lambda}\rangle^q}{\eta(y_{\lambda})^q}\right)^{1/q}+\varepsilon\leq \tilde{C}+\varepsilon.
\end{align*}
This implies $f\in W(E',L^q_{1/\eta})$ and, as $\varepsilon>0$ is arbitrary, we conclude $\|\{\|\psi_{\lambda}f\|_{E'} \eta(y_{\lambda})^{-1}\}_{\lambda\in\Lambda}\|_{\ell^q(\Lambda)}\leq \tilde{C}\|f\|_{(W(E,L^p_{\eta}))'_b}$; this completes the proof when $1<p<\infty$. The case when $f\in (W(E,\mathcal{C}_{\eta,0}))'_b$ is analogous, the only difference being that instead of defining $\varphi_{\Lambda'}$ as in \eqref{fun-def-for-the-est1}, one defines it by
$$
\varphi_{\Lambda'}:=\sum_{\lambda\in\Lambda'} \eta(y_{\lambda})^{-1}\psi_{\lambda}\chi_{\lambda}\in\SSS^*_{\dagger}(\RR^n),\quad \mbox{for finite}\,\, \Lambda'\subseteq\Lambda.
$$
\end{proof}

\subsection{Complex interpolation of amalgam spaces}
In this subsection, we study the complex interpolation of the amalgam spaces we defined above. We start by introducing the following notation. Let $\eta$ be a weight of class $\dagger$ and let $X$ be a Banach space. As standard, we denote by $\ell^p_{\eta}(\ZZ^n;X)$, $1\leq p\leq\infty$, the Banach space of all $X$-valued sequences $\{c_{\mathbf{n}}\}_{\mathbf{n}\in\ZZ^n}$ on $\ZZ^n$ such that $\|\{c_{\mathbf{n}}\}_{\mathbf{n}\in\ZZ^n}\|_{\ell^p_{\eta}(\ZZ^n;X)} :=(\sum_{\mathbf{n}\in\ZZ^n} \|c_{\mathbf{n}}\|_X^p \eta(\mathbf{n})^p)^{1/p}<\infty$ with the obvious modification when $p=\infty$. Additionally, we denote by $c_{\eta,0}(\ZZ^n;X)$ the closed subspace of $\ell^{\infty}_{\eta}(\ZZ^n;X)$ consisting of all $\{c_{\mathbf{n}}\}_{\mathbf{n}\in\ZZ^n}\in\ell^{\infty}_{\eta}(\ZZ^n;X)$ which satisfy $\lim_{|\mathbf{n}|\rightarrow\infty} \|c_{\mathbf{n}}\|_X\eta(\mathbf{n})=0$.

\begin{lemma}\label{lemma-retra-foramlps}
Let $\psi_1,\psi_2\in\SSS^*_{\dagger}(\RR^n)$ be such that $\{T_{\mathbf{n}}(\psi_1\psi_2)\}_{\mathbf{n}\in\ZZ^n}$ is a UCPU of class $*-\dagger$ with points $\{\mathbf{n}\}_{\mathbf{n}\in\ZZ^n}$. Let $E$ be a TMIB or a DTMIB space of class $*-\dagger$ and $\eta$ a weight of class $\dagger$. Then the mapping
\begin{equation}
\mathcal{I}:W(E,L^p_{\eta})\rightarrow \ell^p_{\eta}(\ZZ^n;E),\quad \mathcal{I}(f)=\{fT_{\mathbf{n}}\psi_1\}_{\mathbf{n}\in\ZZ^n},\qquad p\in[1,\infty],
\end{equation}
is well-defined and continuous. Moreover, for each $\{f_{\mathbf{n}}\}_{\mathbf{n}\in\ZZ^n}\in \ell^p_{\eta}(\ZZ^n;E)$, the series $\sum_{\mathbf{n}\in\ZZ^n} f_{\mathbf{n}}T_{\mathbf{n}}\psi_2$ absolutely converges in $E_{\operatorname{loc}}$ to an element of $W(E,L^p_{\eta})$ and the mapping
\begin{equation}\label{map-p-for-interpolat}
\mathcal{P}:\ell^p_{\eta}(\ZZ^n;E)\rightarrow W(E,L^p_{\eta}),\quad \mathcal{P}(\{f_{\mathbf{n}}\}_{\mathbf{n}\in\ZZ^n})=\sum_{\mathbf{n}\in\ZZ^n} f_{\mathbf{n}}T_{\mathbf{n}}\psi_2,\qquad p\in[1,\infty],
\end{equation}
is continuous. Furthermore, $\mathcal{P}\mathcal{I}=\operatorname{Id}_{W(E,L^p_{\eta})}$.\\
\indent All of the above holds true if $W(E,L^p_{\eta})$ and $\ell^p_{\eta}(\ZZ^n;E)$ are replaced by $W(E,\mathcal{C}_{\eta,0})$ and $c_{\eta,0}(\ZZ^n;E)$ respectively.
\end{lemma}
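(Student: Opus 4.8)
The plan is to establish the four assertions in the order stated: $\mathcal I$ well-defined and continuous, $\mathcal P$ well-defined (including absolute convergence) and continuous, and $\mathcal P\mathcal I=\operatorname{Id}$, and then note that the $\mathcal C_{\eta,0}$ case is identical modulo replacing $\ell^p$ by $c_0$. The key observation throughout is that $\Psi:=\{T_{\mathbf n}(\psi_1\psi_2)\}_{\mathbf n}$ is a UCPU with points $\{\mathbf n\}_{\mathbf n\in\ZZ^n}$, so Theorem \ref{the-g-voaml} lets me compute the $W$-norms via the discrete norm $\|f\|_{p,\Psi,\eta}=\|\{\|fT_{\mathbf n}(\psi_1\psi_2)\|_E\,\eta(\mathbf n)\}_{\mathbf n}\|_{\ell^p}$. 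For $\mathcal I$: since $\{T_{-\mathbf n}T_{\mathbf n}\psi_1\}=\{\psi_1\}$ is trivially bounded in $\SSS^*_\dagger(\RR^n)$, Proposition \ref{lemma-amalg-space-alter-dfnts}(ii) shows that $x\mapsto\|fT_x\psi_1\|_E$ lies in $L^p_\eta$ with norm $\lesssim\|f\|_{W(E,L^p_\eta)}$; sampling this continuous function at the lattice points (and using that $\eta(\mathbf n)$ is comparable to $\eta$ on a fixed cube around $\mathbf n$, together with condition $(2)$) gives $\|\{fT_{\mathbf n}\psi_1\}\|_{\ell^p_\eta(\ZZ^n;E)}\lesssim\|f\|_{p,\Psi,\eta}$. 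Alternatively, and more cleanly, write $fT_{\mathbf n}\psi_1=(fT_{\mathbf n}(\psi_1\psi_2))\cdot T_{\mathbf n}(\text{something})$ — but the cleanest route is simply $\|fT_{\mathbf n}\psi_1\|_E\le\|f\psi_1(\cdot-\mathbf n)\|_E$ handled by the boundedness of $\{T_{\mathbf n}\psi_1\cdot T_{-\mathbf n}\}$; in any case this step is routine.

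For $\mathcal P$, fix $\{f_{\mathbf n}\}_{\mathbf n}\in\ell^p_\eta(\ZZ^n;E)$. Absolute convergence in $E_{\operatorname{loc}}$ means: for every $\chi\in\SSS^*_\dagger(\RR^n)$, $\sum_{\mathbf n}\|(f_{\mathbf n}T_{\mathbf n}\psi_2)\chi\|_E<\infty$. Estimate $\|(f_{\mathbf n}T_{\mathbf n}\psi_2)\chi\|_E\le\|f_{\mathbf n}\|_E\,\|(T_{\mathbf n}\psi_2)\chi\|_{\mathcal FL^1_{\nu_E}}$ using \eqref{con-mul-module-aaa}. By the same computation as in Lemma \ref{est-partial-growt-fourbeuralg} (Beurling: for every $\tilde h$; Roumieu: for some $\tilde h$), $\|(T_{\mathbf n}\psi_2)\chi\|_{\mathcal FL^1_{\nu_E}}\le \tilde C e^{-A(\tilde h|\mathbf n-0|)}$ after localising $\chi$ near the origin — more precisely, $\chi$ is fixed so its "window" contributes a bounded factor and the decay comes from the distance of the support-concentration point $\mathbf n$ of $T_{\mathbf n}\psi_2$ to the origin. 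Then $\sum_{\mathbf n}\|f_{\mathbf n}\|_E e^{-A(\tilde h|\mathbf n|)}<\infty$ by Hölder against $\{e^{-A(\tilde h|\mathbf n|)}\}\in\ell^{q}$, or simply because $\{\|f_{\mathbf n}\|_E\}$ is bounded (it lies in $\ell^p_\eta$ with $\eta$ bounded below near each point — if $\eta$ is not bounded below one replaces $A(\tilde h|\mathbf n|)$ by $A(\tilde h|\mathbf n|)/2$ and absorbs a factor $e^{A(\tau|\mathbf n|)}/\eta(\mathbf n)$ which is controlled by the weight inequality). This proves the series converges to some $f\in E_{\operatorname{loc}}$. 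To see $f\in W(E,L^p_\eta)$ with the norm bound, compute $\|fT_{\mathbf m}(\psi_1\psi_2)\|_E\le\sum_{\mathbf n}\|f_{\mathbf n}\|_E\,\|(T_{\mathbf n}\psi_2)(T_{\mathbf m}(\psi_1\psi_2))\|_{\mathcal FL^1_{\nu_E}}$, and here Lemma \ref{cor-for-dis-betw-two-elebupu} (applied to the UCPU, with $T_{\mathbf n}\psi_2$ dominated by an element concentrated at $\mathbf n$) gives $\|(T_{\mathbf n}\psi_2)(T_{\mathbf m}(\psi_1\psi_2))\|_{\mathcal FL^1_{\nu_E}}\le C e^{-A(h|\mathbf n-\mathbf m|)}$. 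Thus $\{\|fT_{\mathbf m}(\psi_1\psi_2)\|_E\}_{\mathbf m}$ is dominated by the discrete convolution of $\{\|f_{\mathbf n}\|_E\}$ with the rapidly decaying kernel $\{Ce^{-A(h|\mathbf k|)}\}$; combining with the weight inequality $\eta(\mathbf m)\le C_1\eta(\mathbf n)e^{A(\tau|\mathbf m-\mathbf n|)}$ and absorbing $e^{A(\tau|\mathbf k|)}e^{-A(h|\mathbf k|)}\le c_0 e^{-A(h|\mathbf k|/H)}$ (exactly as in the proof of Theorem \ref{duali}), Young's inequality for the weighted $\ell^p$ convolution yields $\|f\|_{p,\Psi,\eta}\le C'\|\{f_{\mathbf n}\}\|_{\ell^p_\eta(\ZZ^n;E)}$, hence continuity of $\mathcal P$ by Theorem \ref{the-g-voaml}.

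For $\mathcal P\mathcal I=\operatorname{Id}$: for $f\in W(E,L^p_\eta)\subseteq E_{\operatorname{loc}}\subseteq\SSS'^*_\dagger(\RR^n)$ we have $\mathcal P\mathcal I(f)=\sum_{\mathbf n}(fT_{\mathbf n}\psi_1)T_{\mathbf n}\psi_2=\sum_{\mathbf n}fT_{\mathbf n}(\psi_1\psi_2)=f\sum_{\mathbf n}T_{\mathbf n}(\psi_1\psi_2)=f$, where the middle equality interchanges the (now established) $E_{\operatorname{loc}}$-absolutely-convergent sum with the fixed multiplier, and the last equality uses property $(4)$ of the UCPU together with the remark after Lemma \ref{lemma-for-familyof-points-b} that $\sum_{\mathbf n}\psi_{\mathbf n}f$ converges to $f$ in $\SSS'^*_\dagger(\RR^n)$ for any $f$ there. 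The $\mathcal C_{\eta,0}$ statement follows by the same estimates: $\mathcal I$ maps into $c_{\eta,0}(\ZZ^n;E)$ because $x\mapsto\|fT_x\psi_1\|_E\eta(x)\in L^\infty_{\eta,0}$ forces the sampled sequence to vanish at infinity, and $\mathcal P$ maps $c_{\eta,0}(\ZZ^n;E)$ into $W(E,\mathcal C_{\eta,0})$ because the discrete convolution of a null sequence with an $\ell^1$ kernel is again a null sequence (the weight absorption is the same). The main obstacle is the second step — proving $\mathcal P$ well-defined and continuous — since it requires the off-diagonal decay estimates for $\|(T_{\mathbf n}\psi_2)(T_{\mathbf m}(\psi_1\psi_2))\|_{\mathcal FL^1_{\nu_E}}$ and the careful interplay between this decay and the weight $\eta$ via Young's inequality; however all the needed decay is already packaged in Lemmas \ref{est-partial-growt-fourbeuralg} and \ref{cor-for-dis-betw-two-elebupu}, so the argument is a routine, if slightly lengthy, bookkeeping exercise parallel to the proof of Theorem \ref{duali}.
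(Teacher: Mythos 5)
Your proposal is correct and follows essentially the same route as the paper: compute norms via the discrete characterisation of Theorem \ref{the-g-voaml}, use the module estimate \eqref{con-mul-module-aaa} together with off-diagonal decay of the type furnished by Lemma \ref{est-partial-growt-fourbeuralg} for $\|(T_{\mathbf{n}}\psi_2)(T_{\mathbf{m}}(\psi_1\psi_2))\|_{\mathcal{F}L^1_{\nu_E}}$, absorb the weight via $e^{A(\tau|\mathbf{k}|)}e^{-A(h|\mathbf{k}|)}\leq c_0e^{-A(h|\mathbf{k}|/H)}$ and apply Minkowski/Young to the discrete convolution for the continuity of $\mathcal{P}$, and use the UCPU resolution of the identity for $\mathcal{P}\mathcal{I}=\operatorname{Id}$, exactly as the paper does. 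One small caution on the $\mathcal{I}$ step: for $p<\infty$, ``sampling'' the $L^p_{\eta}$ function $x\mapsto\|fT_x\psi_1\|_E$ at lattice points is not by itself a valid inference; the intended argument (STEP 1 of the proof of Theorem \ref{the-g-voaml}, which the paper invokes) dominates $\|fT_{\mathbf{n}}\psi_1\|_E$, for every $x$ in a fixed cube $\mathbf{n}+K$, by $\sup_{\varphi\in B}\|fT_x\varphi\|_E$ with the bounded family $B=\{T_{-r}\psi_1\,|\,r\in K\}$ and then integrates over the cube using bounded overlap and Proposition \ref{lemma-amalg-space-alter-dfnts} $(ii)$ --- your appeal to that proposition with $B=\{\psi_1\}$ alone does not quite deliver this.
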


\begin{remark}
Such $\psi_1,\psi_2\in\SSS^*_{\dagger}(\RR^n)$ always exist in view of Remark \ref{rem-bupu-of-class-a-dss} and Proposition \ref{facto-s-mul}.
\end{remark}

\begin{proof} Set $\psi:=\psi_1\psi_2\in\SSS^*_{\dagger}(\RR^n)\backslash\{0\}$. The fact that $\mathcal{I}$ is well-defined and continuous can be shown by employing the same technique as in STEP 1 of the proof of Theorem \ref{the-g-voaml}. To verify that for each $\{f_{\mathbf{n}}\}_{\mathbf{n}\in\ZZ^n}\in\ell^p_{\eta}(\ZZ^n;E)$, the series $\sum_{\mathbf{n}\in\ZZ^n} f_{\mathbf{n}}T_{\mathbf{n}}\psi_2$ absolutely converges in $E_{\operatorname{loc}}$, let $\chi\in\SSS^*_{\dagger}(\RR^n)$. H\"older's inequality gives
\begin{align*}
\sum_{\mathbf{n}\in\ZZ^n} \|\chi f_{\mathbf{n}}T_{\mathbf{n}} \psi_2\|_E\leq \sum_{\mathbf{n}\in\ZZ^n} \|f_{\mathbf{n}}\|_E\|\chi T_{\mathbf{n}} \psi_2\|_{\mathcal{F}L^1_{\nu_E}}\leq C'\|\{f_{\mathbf{n}}\}_{\mathbf{n}\in\ZZ^n}\|_{\ell^p_{\eta}(\ZZ^n;E)},
\end{align*}
which shows that $\sum_{\mathbf{n}\in\ZZ^n} f_{\mathbf{n}}T_{\mathbf{n}}\psi_2$ absolutely converges in $E_{\operatorname{loc}}$ to some $f\in E_{\operatorname{loc}}$ (since $E_{\operatorname{loc}}$ is a complete l.c.s.). Next, we show that $f:=\sum_{\mathbf{n}\in\ZZ^n} f_{\mathbf{n}}T_{\mathbf{n}}\psi_2\in W(E,L^p_{\eta})$ when $\{f_{\mathbf{n}}\}_{\mathbf{n}\in\ZZ^n}\in\ell^p_{\eta}(\ZZ^n;E)$. In the Beurling case, choose $C_1,\tau>0$ such that \eqref{est-forweight-s} holds for $\eta$. For $h:=H\tau$, \eqref{chose-ofh-forconvsss} holds true. In view of Lemma \ref{est-partial-growt-fourbeuralg}, there is $C>0$ such that
\begin{equation}\label{est-for-tog-par-interpolat}
\|(T_{\mathbf{n}}\psi_2)(T_{\mathbf{m}}\psi)\|_{\mathcal{F}L^1_{\nu_E}}\leq Ce^{-A(h|\mathbf{n}-\mathbf{m}|)},\quad \mathbf{n},\mathbf{m}\in\ZZ^n.
\end{equation}
In the Roumieu case, Lemma \ref{est-partial-growt-fourbeuralg} implies that there are $C,h>0$ such that \eqref{est-for-tog-par-interpolat} holds true and we pick $C_1>0$ such that \eqref{est-forweight-s} holds true with $\tau:=h/H$; consequently, \eqref{chose-ofh-forconvsss} is also valid. The Minkowski inequality gives
\begin{align*}
\left(\sum_{\mathbf{m}\in\ZZ^n}\|fT_{\mathbf{m}}\psi\|_E^p\eta(\mathbf{m})^p\right)^{1/p} &\leq C_1\left(\sum_{\mathbf{m}\in\ZZ^n}\left(\sum_{\mathbf{n}\in\ZZ^n} \|f_{\mathbf{n}}(T_{\mathbf{n}}\psi_2)(T_{\mathbf{m}}\psi)\|_E\eta(\mathbf{n}) e^{A(\tau|\mathbf{m}-\mathbf{n}|)}\right)^p\right)^{1/p}\\
&\leq c_0CC_1\left(\sum_{\mathbf{m}\in\ZZ^n}\left(\sum_{\mathbf{n}\in\ZZ^n} \|f_{\mathbf{n}}\|_E\eta(\mathbf{n}) e^{-A(h|\mathbf{m}-\mathbf{n}|/H)}\right)^p\right)^{1/p}\\
&= c_0CC_1\left(\sum_{\mathbf{m}\in\ZZ^n}\left(\sum_{\mathbf{n}\in\ZZ^n} \|f_{\mathbf{m}-\mathbf{n}}\|_E\eta(\mathbf{m}-\mathbf{n}) e^{-A(h|\mathbf{n}|/H)}\right)^p\right)^{1/p}\\
&\leq c_0CC_1\left(\sum_{\mathbf{n}\in\ZZ^n}e^{-A(h|\mathbf{n}|/H)}\right) \|\{f_{\mathbf{n}}\}_{\mathbf{n}\in\ZZ^n}\|_{\ell^p_{\eta}(\ZZ^n;E)},
\end{align*}
with obvious modifications when $p=\infty$. This shows that $f\in W(E,L^p_{\eta})$ and that the mapping \eqref{map-p-for-interpolat} is continuous. The identity $\mathcal{P}\mathcal{I}=\operatorname{Id}_{W(E,L^p_{\eta})}$ is straightforward to verify. The proof for $W(E,\mathcal{C}_{\eta,0})$ and $c_{\eta,0}(\ZZ^n;E)$ is similar and we omit it.
\end{proof}

Before we state and prove the result on complex interpolation, we point out the following facts. If $E_0$ and $E_1$ are TMIB spaces of class $*-\dagger$ then $E_0\cap E_1$ and $E_0+E_1$ are also TMIB spaces of class $*-\dagger$; the only non-trivial part is the density of $\SSS^*_{\dagger}(\RR^n)$ in $E_0\cap E_1$ and $E_0+E_1$ which follows from \cite[Corollary 3.3]{DPPV}. Consequently, \cite[Theorem 2.7.1, p. 32]{bergh-l} implies that $E'_0\cap E'_1$ and $E'_0+ E'_1$ are DTMIB spaces of class $*-\dagger$.

\begin{theorem}
Let $\eta_0$ and $\eta_1$ be two weights of class $\dagger$ and, for $\theta\in(0,1)$, set $\eta_{\theta}(x):=\eta_0(x)^{1-\theta}\eta_1(x)^{\theta}$, $x\in\RR^n$.
\begin{itemize}
\item[$(i)$] If $E_0$ and $E_1$ are TMIB spaces of class $*-\dagger$, then $[E_0,E_1]_{[\theta]}$, $0<\theta<1$, is a TMIB space of class $*-\dagger$. For $1\leq p_0,p_1<\infty$, it holds that
    \begin{equation}\label{interpol-p0-p1}
    [W(E_0,L^{p_0}_{\eta_0}),W(E_1,L^{p_1}_{\eta_1})]_{[\theta]}=W([E_0,E_1]_{[\theta]}, L^{p_{\theta}}_{\eta_{\theta}}),\quad 0<\theta<1,
    \end{equation}
    with equivalent norms, where $p_{\theta}^{-1}=(1-\theta)p_0^{-1}+\theta p_1^{-1}$. Furthermore,
    \begin{align}
    &[W(E_0,\mathcal{C}_{\eta_0,0}),W(E_1,L^{p_1}_{\eta_1})]_{[\theta]}= W([E_0,E_1]_{[\theta]}, L^{p_1/\theta}_{\eta_{\theta}}),\quad 0<\theta<1,\label{interpol-coinf-p1}\\
    &[W(E_0,L^{p_0}_{\eta_0}),W(E_1,\mathcal{C}_{\eta_1,0})]_{[\theta]} =W([E_0,E_1]_{[\theta]}, L^{p_0/(1-\theta)}_{\eta_{\theta}}),\quad 0<\theta<1,\label{interpol-p0-coinf}\\
    &[W(E_0,\mathcal{C}_{\eta_0,0}),W(E_1,\mathcal{C}_{\eta_1,0})]_{[\theta]}= W([E_0,E_1]_{[\theta]}, \mathcal{C}_{\eta_{\theta},0}),\quad 0<\theta<1.\label{interpol-coinf-coinf}
    \end{align}
\item[$(ii)$] If $E_0$ and $E_1$ are DTMIB spaces of class $*-\dagger$ such that at least one of them is reflexive (or, equivalently, its predual is a reflexive TMIB space of class $*-\dagger$), then $[E_0,E_1]_{[\theta]}$, $0<\theta<1$, is a DTMIB space of class $*-\dagger$. Furthermore, for $1\leq p_0,p_1<\infty$, \eqref{interpol-p0-p1}, \eqref{interpol-coinf-p1}, \eqref{interpol-p0-coinf} and \eqref{interpol-coinf-coinf} hold true.
\end{itemize}
\end{theorem}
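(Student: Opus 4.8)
The plan is to deduce every identity in the theorem from a single structural fact, already contained in Lemma~\ref{lemma-retra-foramlps}: each amalgam space is a \emph{retract} of a weighted vector‑valued sequence space, realised through a coretraction $\mathcal I$ and a retraction $\mathcal P$ that can be built from one and the same pair $\psi_1,\psi_2\in\SSS^*_{\dagger}(\RR^n)$ and do \emph{not} depend on the local component $E$, the exponent $p$ or the weight $\eta$. Accordingly I would first fix such $\psi_1,\psi_2$ once and for all (they exist by Remark~\ref{rem-bupu-of-class-a-dss} and Proposition~\ref{facto-s-mul}) and record that $\mathcal P\mathcal I f=f\cdot\sum_{\mathbf n\in\ZZ^n}T_{\mathbf n}(\psi_1\psi_2)=f$ already as an identity in $\SSS'^*_{\dagger}(\RR^n)$; hence $\mathcal P\mathcal I$ is simultaneously the identity on \emph{all} the amalgam spaces occurring below, so that $(\mathcal I,\mathcal P)$ is a retraction pair between any couple built from two of these amalgam spaces and the corresponding couple of sequence spaces, and likewise between $W([E_0,E_1]_{[\theta]},\,\cdot\,)$ and $\ell^{p_\theta}_{\eta_\theta}(\ZZ^n;[E_0,E_1]_{[\theta]})$ (resp. $c_{\eta_\theta,0}$).

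\textbf{Step 1: $[E_0,E_1]_{[\theta]}$ is a (D)TMIB space.} This is what licenses applying Lemma~\ref{lemma-retra-foramlps} with that space as local component. In the TMIB case I would check (I)--(III) directly: the chain $\SSS^*_{\dagger}(\RR^n)\hookrightarrow E_0\cap E_1\subseteq[E_0,E_1]_{[\theta]}\subseteq E_0+E_1\hookrightarrow\SSS'^*_{\dagger}(\RR^n)$ — using that $E_0\cap E_1$ and $E_0+E_1$ are TMIB together with \cite[Thm.~4.2.2]{bergh-l} — gives the continuous inclusions, and density of $\SSS^*_{\dagger}(\RR^n)$ in $[E_0,E_1]_{[\theta]}$ follows from its density in $E_0\cap E_1$ and the density of $E_0\cap E_1$ in $[E_0,E_1]_{[\theta]}$; invariance (II) and the bounds $\|T_x\|_{\mathcal L_b([E_0,E_1]_{[\theta]})}\le\omega_{E_0}(x)^{1-\theta}\omega_{E_1}(x)^{\theta}$, $\|M_{-\xi}\|_{\mathcal L_b([E_0,E_1]_{[\theta]})}\le\nu_{E_0}(\xi)^{1-\theta}\nu_{E_1}(\xi)^{\theta}$ are immediate from the complex method being an exact interpolation functor of exponent $\theta$, and these products satisfy \eqref{omega} because $A$ and $M$ are non‑decreasing (in the Roumieu case one uses a common $\tau$). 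For the DTMIB case I would write $E_j=(E_j^0)'$ with $E_j^0$ a TMIB, note that $\SSS^*_{\dagger}(\RR^n)\subseteq E_0^0\cap E_1^0$ is dense in each $E_j^0$, and apply \cite[Cor.~4.5.2]{bergh-l} (valid since one $E_j^0$ is reflexive) to get $[E_0,E_1]_{[\theta]}=([E_0^0,E_1^0]_{[\theta]})'$; by the previous case $[E_0^0,E_1^0]_{[\theta]}$ is a TMIB, hence $[E_0,E_1]_{[\theta]}$ is a DTMIB.

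\textbf{Step 2: interpolation of sequence spaces and the retract argument.} I would quote the classical complex interpolation of weighted vector‑valued sequence spaces (Calderón; the scalar weighted $\ell^p$ case is \cite[Thm.~5.1.2]{bergh-l}, the $c_0$‑variants are equally standard): for a Banach couple $(A_0,A_1)$ and $1\le p_0,p_1<\infty$, with equivalent norms,
\[ [\ell^{p_0}_{\eta_0}(\ZZ^n;A_0),\ell^{p_1}_{\eta_1}(\ZZ^n;A_1)]_{[\theta]}=\ell^{p_\theta}_{\eta_\theta}(\ZZ^n;[A_0,A_1]_{[\theta]}), \]
\[ [c_{\eta_0,0}(\ZZ^n;A_0),\ell^{p_1}_{\eta_1}(\ZZ^n;A_1)]_{[\theta]}=\ell^{p_1/\theta}_{\eta_\theta}(\ZZ^n;[A_0,A_1]_{[\theta]}) \quad(\text{and symmetrically}), \]
\[ [c_{\eta_0,0}(\ZZ^n;A_0),c_{\eta_1,0}(\ZZ^n;A_1)]_{[\theta]}=c_{\eta_\theta,0}(\ZZ^n;[A_0,A_1]_{[\theta]}). \]
Taking $A_j=E_j$ and using that interpolation functors commute with retractions, the couple of amalgam spaces on the left of \eqref{interpol-p0-p1} (or of \eqref{interpol-coinf-p1}, \eqref{interpol-p0-coinf}, \eqref{interpol-coinf-coinf}) interpolates to $\mathcal P\bigl(\ell^{p_\theta}_{\eta_\theta}(\ZZ^n;[E_0,E_1]_{[\theta]})\bigr)$ (resp. to $\mathcal P\bigl(c_{\eta_\theta,0}(\ZZ^n;[E_0,E_1]_{[\theta]})\bigr)$), with norm equivalent to $\|\mathcal I(\cdot)\|$ of the relevant sequence space. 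On the other hand, Lemma~\ref{lemma-retra-foramlps} applied with local component $[E_0,E_1]_{[\theta]}$ (legitimate by Step~1) and the \emph{same} $\psi_1,\psi_2$ identifies $W([E_0,E_1]_{[\theta]},L^{p_\theta}_{\eta_\theta})$ (resp. $W([E_0,E_1]_{[\theta]},\mathcal C_{\eta_\theta,0})$) with $\mathcal P$ of that very same sequence space, again with norm equivalent to $\|\mathcal I(\cdot)\|$. Comparing the two descriptions inside $\SSS'^*_{\dagger}(\RR^n)$ gives the asserted identities with equivalent norms. Part (ii) requires no new argument: the retract computation is identical, since both Lemma~\ref{lemma-retra-foramlps} and the sequence‑space interpolation above are valid for arbitrary Banach local components, and the only extra input — that $[E_0,E_1]_{[\theta]}$ is a DTMIB — was supplied in Step~1.

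\textbf{Main obstacle.} The verification of (I)--(III) in Step~1 and the four‑fold bookkeeping of the global components are routine. The point demanding the most care is the packaging in Step~2: one must insist that $\mathcal I$ and $\mathcal P$ are \emph{literally the same} operations for the local components $E_0$, $E_1$ and $[E_0,E_1]_{[\theta]}$ (so that the two computations of the interpolated space can be equated), and that $\mathcal P\mathcal I$ is the identity on the full sum of the couple — which is precisely why one records the $\SSS'^*_{\dagger}$‑level identity $\mathcal P\mathcal I=\operatorname{Id}$ at the outset; a minor secondary point is checking the density hypothesis of \cite[Cor.~4.5.2]{bergh-l} in the DTMIB case.
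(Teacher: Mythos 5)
Your proposal is correct and follows essentially the same route as the paper: both realise the amalgam spaces as retracts of the weighted vector-valued sequence spaces via the operators $\mathcal{I},\mathcal{P}$ of Lemma \ref{lemma-retra-foramlps}, interpolate those sequence spaces, invoke the standard retract principle, and obtain the (D)TMIB property of $[E_0,E_1]_{[\theta]}$ from the density of the intersection together with the duality theorem \cite[Corollary 4.5.2]{bergh-l}. The only presentational difference is that the paper makes the couple-morphism property of $\mathcal{I},\mathcal{P}$ precise by first defining them on the ambient couple $W(E_0+E_1,L^{\infty}_{\eta})$, $\ell^{\infty}_{\eta}(\ZZ^n;E_0+E_1)$ with the common smaller weight $\eta=(\eta_0+\eta_0^{-1}+\eta_1+\eta_1^{-1})^{-1}$ and restricting via the closed graph theorem, whereas you secure the same consistency through the $\SSS'^*_{\dagger}$-level identity $\mathcal{P}\mathcal{I}=\operatorname{Id}$.
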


\begin{proof} We start with the following general observation. For $1\leq p_0,p_1<\infty$ and $0<\theta<1$, let $p_{\theta}$ and $\eta_{\theta}$ be as in the theorem. By employing similar technique as in the proof of \cite[Theorem 1.18.1, p. 121]{triebel}, one can show that for any (abstract) interpolation couple $\{X_0,X_1\}$, it holds that
\begin{align}
&[\ell^{p_0}_{\eta_0}(\ZZ^n;X_0),\ell^{p_1}_{\eta_1}(\ZZ^n;X_1)]_{[\theta]}= \ell^{p_{\theta}}_{\eta_{\theta}}(\ZZ^n; [X_0,X_1]_{[\theta]}),\label{int-for-lpspa-wei-withno-wgh}\\
&[c_{\eta_0,0}(\ZZ^n;X_0),\ell^{p_1}_{\eta_1}(\ZZ^n;X_1)]_{[\theta]}= \ell^{p_1/\theta}_{\eta_{\theta}}(\ZZ^n;[X_0,X_1]_{[\theta]}),\\
&[\ell^{p_0}_{\eta_0}(\ZZ^n;X_0),c_{\eta_1,0}(\ZZ^n;X_1)]_{[\theta]}= \ell^{p_0/(1-\theta)}_{\eta_{\theta}}(\ZZ^n;[X_0,X_1]_{[\theta]}),\\ &[c_{\eta_0,0}(\ZZ^n;X_0),c_{\eta_1,0}(\ZZ^n;X_1)]_{[\theta]}= c_{\eta_{\theta},0}(\ZZ^n;[X_0,X_1]_{[\theta]}).
\end{align}
We first address $(i)$. The only non-trivial part in showing that $[E_0,E_1]_{[\theta]}$ is a TMIB space of class $*-\dagger$ is the density of $\SSS^*_{\dagger}(\RR^n)$; this fact follows from \cite[Theorem 1.9.3 (c), p. 59]{triebel}. Set $\eta(x):=(\eta_0(x)+\eta_0(x)^{-1}+\eta_1(x)+\eta_1(x)^{-1})^{-1}$, $x\in\RR^n$. Clearly, $\eta$ is a weight of class $\dagger$ and $\eta(x)\leq \eta_j(x)$, $x\in\RR^n$, $j=0,1$. Let $\mathcal{I}:W(E_0+E_1,L^{\infty}_{\eta})\rightarrow \ell^{\infty}_{\eta}(\ZZ^n;E_0+E_1)$ and $\mathcal{P}:\ell^{\infty}_{\eta}(\ZZ^n;E_0+E_1)\rightarrow W(E_0+E_1,L^{\infty}_{\eta})$ be the continuous mappings defined in Lemma \ref{lemma-retra-foramlps}. Notice that, for any $p\in[1,\infty]$ and $j=0,1$, $W(E_j,L^p_{\eta_j})\subseteq W(E_0+E_1,L^{\infty}_{\eta})$ and $\ell^p_{\eta_j}(\ZZ^n;E_j)\subseteq \ell^{\infty}_{\eta}(\ZZ^n;E_0+E_1)$ continuously. To prove \eqref{interpol-p0-p1}, we point out that the above together with Lemma \ref{lemma-retra-foramlps} and the closed graph theorem imply that $\mathcal{I}$ and $\mathcal{P}$ restrict to continuous operators with the following domains and codomains:
\begin{align*}
&\mathcal{I}:W(E_0,L^{p_0}_{\eta_0})+W(E_1,L^{p_1}_{\eta_1})\rightarrow \ell^{p_0}_{\eta_0}(\ZZ^n;E_0)+\ell^{p_1}_{\eta_1}(\ZZ^n;E_1),\\
&\mathcal{P}:\ell^{p_0}_{\eta_0}(\ZZ^n;E_0)+\ell^{p_1}_{\eta_1}(\ZZ^n;E_1)\rightarrow W(E_0,L^{p_0}_{\eta_0})+W(E_1,L^{p_1}_{\eta_1}),\\
&\mathcal{I}:W(E_j,L^{p_j}_{\eta_j})\rightarrow \ell^{p_j}_{\eta_j}(\ZZ^n;E_j),\quad \mathcal{P}:\ell^{p_j}_{\eta_j}(\ZZ^n;E_j)\rightarrow W(E_j,L^{p_j}_{\eta_j}),\qquad j=0,1.
\end{align*}
In each of these cases, $\mathcal{P}\mathcal{I}=\operatorname{Id}$. Now, \cite[Theorem 1.2.4, p. 22]{triebel} yields that $\mathcal{I}$ is a topological isomorphism from $[W(E_0,L^{p_0}_{\eta_0}),W(E_1,L^{p_1}_{\eta_1})]_{[\theta]}$ onto $\mathcal{I}\mathcal{P}([\ell^{p_0}_{\eta_0}(\ZZ^n;E_0), \ell^{p_1}_{\eta_1}(\ZZ^n;E_1)]_{[\theta]})$. In view of \eqref{int-for-lpspa-wei-withno-wgh}, the latter is $\mathcal{I}\mathcal{P}(\ell^{p_{\theta}}_{\eta_{\theta}}(\ZZ^n;[E_0,E_1]_{[\theta]}))$. Because of Lemma \ref{lemma-retra-foramlps}, this is exactly $\mathcal{I}(W([E_0,E_1]_{[\theta]}, L^{p_{\theta}}_{\eta_{\theta}}))$ and the proof of \eqref{interpol-p0-p1} is complete. The rest of the claims in $(i)$ can be shown in an analogous way.\\
\indent We now address $(ii)$. The fact that $[E_0,E_1]_{[\theta]}$ is a DTMIB follows from $(i)$ and \cite[Corollary 4.5.2, p. 98]{bergh-l}. The reset of the claims in $(ii)$ can be proved in an analogous way as in the proof of $(i)$.
\end{proof}

\subsection{Independence from the sequences $\{M_p\}_{p\in\NN}$ and $\{A_p\}_{p\in\NN}$}

As we pointed out in Remark \ref{rem-for-dif-tmibspacdtspa}, a (D)TMIB space $E$ of class $*-\dagger$ can also be a (D)TMIB space for other pair of sequences $\{M_p\}_{p\in\NN}$ and $\{A_p\}_{p\in\NN}$; for example, it is always a (D)TMIB space of class $(p!)-(p!)$. We now show that in this case, the resulting amalgam spaces are always the same. For the proof in the Roumieu case we will need the following alternative description of the topology of $\SSS'^{\{M_p\}}_{\{A_p\}}(\RR^n)$ (see \cite[Section 2.5]{PilipovicK}). Let $\mathfrak{R}$ be the set of all non-decreasing positive sequences on $\ZZ_+$ that tend to $+\infty$. For each $(r_p)\in\mathfrak{R}$, denote by $A_{(r_p)}(\cdot)$ the associated function to the sequence $A_p\prod_{j=1}^pr_j$, $p\in\NN$.\footnote{Here and throughout the rest of the article we employ the principle of vacuous (empty) products, i.e. $\prod_{j=1}^0 r_j=1$.} Then $\SSS^{\{M_p\}}_{\{A_p\}}(\RR^n)$ consists of all $\varphi\in\mathcal{C}^{\infty}(\RR^n)$ such that
\begin{equation}
\sigma_{(r_p)}(\varphi):=\sup_{\alpha\in\NN^n} \frac{\|e^{A_{(r_p)}(|\cdot|)}\partial^{\alpha}\varphi\|_{L^{\infty}(\RR^n)}} {M_{\alpha}\prod_{j=1}^{|\alpha|}r_j}<\infty,\quad (r_p)\in\mathfrak{R},
\end{equation}
and the family of seminorms $\sigma_{(r_p)}$, $(r_p)\in\mathfrak{R}$, generates the topology of $\SSS^{\{M_p\}}_{\{A_p\}}(\RR^n)$.

\begin{proposition}\label{depe-donton-seamalgams}
Let $E$ be a TMIB or a DTMIB space of class $*-\dagger$ and let $\eta$ be a weight of class $\dagger$. Then the amalgam spaces $W(E,L^p_{\eta})$, $1\leq p\leq \infty$, and $W(E,\mathcal{C}_{\eta,0})$ are the same when defined via $\SSS^*_{\dagger}(\RR^n)-\SSS'^*_{\dagger}(\RR^n)$ or via $\SSS^{(p!)}_{(p!)}(\RR^n)- \SSS'^{(p!)}_{(p!)}(\RR^n)$.
\end{proposition}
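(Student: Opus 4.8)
The plan is to compare the two amalgam spaces inside the larger ambient space $\SSS'^{(p!)}_{(p!)}(\RR^n)$. From $(M.6)$ (i.e.\ $p!\subset M_p$ and $p!\subset A_p$) one has the continuous inclusion $\SSS^{(p!)}_{(p!)}(\RR^n)\hookrightarrow\SSS^*_{\dagger}(\RR^n)$ in both the Beurling and Roumieu cases, hence $\SSS'^*_{\dagger}(\RR^n)\hookrightarrow\SSS'^{(p!)}_{(p!)}(\RR^n)$; since $E$ is also a (D)TMIB space of class $(p!)-(p!)$ (Remark \ref{rem-for-dif-tmibspacdtspa}), it follows that $E_{\operatorname{loc}}^{*-\dagger}\subseteq E_{\operatorname{loc}}^{(p!)-(p!)}$ continuously. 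Choosing the defining window from $\SSS^{(p!)}_{(p!)}(\RR^n)\setminus\{0\}\subseteq\SSS^*_{\dagger}(\RR^n)\setminus\{0\}$, Proposition \ref{lemma-amalg-space-alter-dfnts} then gives at once the continuous inclusions $W^{*-\dagger}(E,L^p_{\eta})\hookrightarrow W^{(p!)-(p!)}(E,L^p_{\eta})$ (same norm for that common window) and similarly for $\mathcal{C}_{\eta,0}$, so only the reverse inclusions require work. For $E$ a TMIB space and $1\le p<\infty$ (and for $\mathcal{C}_{\eta,0}$) there is a short argument: by Proposition \ref{tmib}, $\SSS^{(p!)}_{(p!)}(\RR^n)$ is dense in $W^{(p!)-(p!)}(E,L^p_{\eta})$, while $\SSS^{(p!)}_{(p!)}(\RR^n)\subseteq\SSS^*_{\dagger}(\RR^n)\subseteq W^{*-\dagger}(E,L^p_{\eta})$, and $W^{*-\dagger}(E,L^p_{\eta})$ is complete with an equivalent (common–window) norm, hence a closed dense subspace, hence all of $W^{(p!)-(p!)}(E,L^p_{\eta})$; the remaining cases (the $L^{\infty}$ local weight and $E$ a DTMIB) can then be reduced to these via the duality identities of Theorem \ref{duali}. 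It is however cleaner to give one argument covering all cases through the discrete description, which I sketch next.

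Fix a UCPU $\Psi=\{\psi_{\lambda}\}_{\lambda\in\Lambda}$ of class $(p!)-(p!)$ with points $\{y_{\lambda}\}$ as in Remark \ref{rem-bupu-of-class-a-dss} (a normalised Gaussian lies in $\SSS^{(p!)}_{(p!)}(\RR^n)$); by the inclusion above $\Psi$ is also a UCPU of class $*-\dagger$. Using Proposition \ref{facto-s-mul} in $\SSS^{(p!)}_{(p!)}(\RR^n)$, write $\psi_{\lambda}=\psi^{(1)}_{\lambda}\psi^{(2)}_{\lambda}$ with $\psi^{(j)}_{\lambda}=T_{y_{\lambda}}\psi^{(j)}$, $\psi^{(j)}\in\SSS^{(p!)}_{(p!)}(\RR^n)$, so that each family $\{\psi^{(j)}_{\lambda}\}$ still satisfies condition $(1)$ of Definition \ref{def-bupuultr-for-aml}. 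Let $f\in W^{(p!)-(p!)}(E,L^p_{\eta})$. The key point (Claim A) is that $\{\|f\psi^{(1)}_{\lambda}\|_E\eta(y_{\lambda})\}_{\lambda}$ still lies in $\ell^p(\Lambda)$ (resp.\ $c_0(\Lambda)$) with norm $\lesssim\|f\|_{W^{(p!)-(p!)}}$. To see this, apply Proposition \ref{lemma-amalg-space-alter-dfnts}$(ii)$ to the bounded set $B':=\{T_{-r}\psi^{(1)}:r\in K'\}\subseteq\SSS^{(p!)}_{(p!)}(\RR^n)$, where $K'$ is a compact set with $\RR^n=\bigcup_{\lambda}(y_{\lambda}+K')$ (condition $(3)$). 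For $x\in y_{\lambda}+K'$ one writes $T_{y_{\lambda}}\psi^{(1)}=T_x\bigl(T_{y_{\lambda}-x}\psi^{(1)}\bigr)$ and uses that $\eta$ is of class $\dagger$ to get $\|f\psi^{(1)}_{\lambda}\|_E\eta(y_{\lambda})\le C\,\sup_{\varphi\in B'}\|fT_x\varphi\|_E\,\eta(x)$; averaging in $x$ over $y_{\lambda}+K'$, summing over $\lambda$, and invoking the bounded overlap of the cover (condition $(2)$) turns the right side into $\lesssim\|f\|_{p,B',\eta}<\infty$ (with the obvious $\sup$/$c_0$ modifications for $p=\infty$ and $\mathcal C_{\eta,0}$).

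With Claim A I would then set $g:=\sum_{\lambda}f\psi_{\lambda}$ and show it represents $f$ in $W^{*-\dagger}$. First, $g\in\SSS'^*_{\dagger}(\RR^n)$: for $\varphi\in\SSS^*_{\dagger}(\RR^n)$ one uses the factorisation $\langle f\psi_{\lambda},\varphi\rangle=\langle f\psi^{(1)}_{\lambda},\psi^{(2)}_{\lambda}\varphi\rangle$, and an estimate of the same type as in Lemma \ref{est-partial-growt-fourbeuralg} shows that $\{e^{A(\tilde h|y_{\lambda}|)}\psi^{(2)}_{\lambda}\varphi\}_{\lambda}$ is bounded in $\SSS^*_{\dagger}(\RR^n)$ for a suitable $\tilde h>0$ (any $\tilde h>0$ in the Beurling case), uniformly for $\varphi$ in a bounded set; together with the continuity of $E\hookrightarrow\SSS'^*_{\dagger}(\RR^n)$ and the bound $\|f\psi^{(1)}_{\lambda}\|_E\lesssim 1/\eta(y_{\lambda})$ from Claim A, this gives $\sum_{\lambda}|\langle f\psi_{\lambda},\varphi\rangle|\lesssim\sum_{\lambda}e^{A(\tau|y_{\lambda}|)-A(\tilde h|y_{\lambda}|)}<\infty$, uniformly on bounded subsets of $\SSS^*_{\dagger}(\RR^n)$, so the series converges in $\SSS'^*_{\dagger}(\RR^n)$; and $g=f$ in $\SSS'^{(p!)}_{(p!)}(\RR^n)$ because there $\sum_{\lambda}f\psi_{\lambda}=f$ (remark after Lemma \ref{lemma-for-familyof-points-b}). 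Next, for $\chi\in\SSS^*_{\dagger}(\RR^n)$ and $x\in y_{\mu}+K$ one has $gT_x\chi=\sum_{\lambda}(f\psi^{(1)}_{\lambda})(\psi^{(2)}_{\lambda}T_x\chi)$ with each term in $E$ (as $\psi^{(2)}_{\lambda}T_x\chi\in\SSS^*_{\dagger}(\RR^n)\subseteq\mathcal{F}L^1_{\nu_E}$ by Lemma \ref{lemma-for-inc-dl1-f}), and Lemma \ref{est-partial-growt-fourbeuralg} gives $\|\psi^{(2)}_{\lambda}T_x\chi\|_{\mathcal{F}L^1_{\nu_E}}\le\tilde Ce^{-A(\tilde h|y_{\lambda}-y_{\mu}|)}$; hence
\[
\|gT_x\chi\|_E\,\eta(x)\ \lesssim\ \sum_{\lambda}\bigl(\|f\psi^{(1)}_{\lambda}\|_E\eta(y_{\lambda})\bigr)\,e^{A(\tau|y_{\lambda}-y_{\mu}|)-A(\tilde h|y_{\lambda}-y_{\mu}|)}\ \lesssim\ \sum_{\lambda}a_{\lambda}\,e^{-A(h_0|y_{\lambda}-y_{\mu}|)},
\]
where $a=\{\|f\psi^{(1)}_{\lambda}\|_E\eta(y_{\lambda})\}\in\ell^p(\Lambda)$ (resp.\ $c_0$) by Claim A, the $\eta$-ratio is absorbed by class $\dagger$, and the constants are arranged by taking $\tilde h$ large (Beurling) or $\tau,h_0$ small (Roumieu). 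Since the kernel $e^{-A(h_0|y_{\lambda}-y_{\mu}|)}$ has uniformly summable rows and columns (Lemma \ref{lemma-est-sum-of-exponsmall}), a discrete Young/Schur estimate makes the right side the $\mu$-th entry of an $\ell^p(\Lambda)$ (resp.\ $c_0(\Lambda)$) sequence, and integrating against the bounded-overlap cover $\{y_{\mu}+K\}$ gives $\|g\|_{p,\chi,\eta}\lesssim\|a\|\lesssim\|f\|_{W^{(p!)-(p!)}}$ (sup version for $p=\infty$; for $\mathcal C_{\eta,0}$ one gets $\|gT_x\chi\|_E\eta(x)\to0$ as $|x|\to\infty$, continuity of $x\mapsto\|gT_x\chi\|_E$ coming from Lemma \ref{lemma-reg-e-loc-proper}$(iii)$ once the $x=0$ case gives $g\in E_{\operatorname{loc}}^{*-\dagger}$). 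Thus $g\in W^{*-\dagger}(E,L^p_{\eta})$ (resp.\ $W^{*-\dagger}(E,\mathcal C_{\eta,0})$) realises $f$, which finishes the reverse inclusions.

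The hard part is Claim A together with the convergence of $\sum_{\lambda}f\psi_{\lambda}$ in the \emph{smaller} space $\SSS'^*_{\dagger}(\RR^n)$: a priori $f$ lives only in $\SSS'^{(p!)}_{(p!)}(\RR^n)$ and $f\chi$ need not even be defined there for $\chi\in\SSS^*_{\dagger}(\RR^n)\setminus\SSS^{(p!)}_{(p!)}(\RR^n)$, so one must reconstruct an $\SSS'^*_{\dagger}$-representative, and for this the mere membership $f\in E_{\operatorname{loc}}^{(p!)-(p!)}$ is insufficient — one must use the global $\ell^p$ (resp.\ $c_0$) control hidden in the amalgam norm, which is precisely what the sampling argument for Claim A exploits. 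The factorisation $\psi_{\lambda}=\psi^{(1)}_{\lambda}\psi^{(2)}_{\lambda}$ is what lets the uniform-concentration estimates of Lemmas \ref{est-partial-growt-fourbeuralg} and \ref{cor-for-dis-betw-two-elebupu} be applied: one factor multiplies $f$ back into $E$, the other carries the decay in $|y_{\lambda}-y_{\mu}|$. As usual, the Beurling and Roumieu cases differ only in the placement of the ``for every $h$'' versus ``for some $h$'' quantifiers on the exponential constants.
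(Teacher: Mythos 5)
Your main (discrete) argument is correct and reaches the same technical core as the paper: factor the $(p!)$-class UCPU functions as $\psi_\lambda=T_{y_\lambda}\psi^{(1)}\cdot T_{y_\lambda}\psi^{(2)}$ via Proposition \ref{facto-s-mul}, use the $L^\infty_\eta$-sample bound $\|fT_{y_\lambda}\psi^{(1)}\|_E\lesssim 1/\eta(y_\lambda)$, and pair against $\psi^{(2)}_\lambda\chi$ with uniform exponential gain in $|y_\lambda|$ to reconstruct an $\SSS'^*_\dagger$-representative of $f$ with $f\chi\in E$ for all $\chi\in\SSS^*_\dagger$. The difference is organisational: the paper first observes (using Proposition \ref{tmib}) that everything reduces to the single \emph{qualitative} inclusion $\widetilde{W}(E,L^{\infty}_{\eta})\subseteq E_{\operatorname{loc}}$, because once $f\in E_{\operatorname{loc}}^{*-\dagger}$ the two amalgam definitions share a common window in $\SSS^{(p!)}_{(p!)}\setminus\{0\}$ and membership/norms transfer for free; it then proves $f\in\SSS'^{\{M_p\}}_{\{A_p\}}$ by estimating $\sigma_{(r_p)}(\chi T_{\mathbf n}\psi_2)$ with the $\mathfrak R$-seminorm description, and shows $f\chi\in E$ by absolute convergence of $\sum(fT_{\mathbf n}\psi_1)(\chi T_{\mathbf n}\psi_2)$ in $E$. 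Your Claim A ($\ell^p$/$c_0$ control of the samples) and the Schur/Young step are therefore redundant: once your $g=\sum_\lambda f\psi_\lambda$ is shown to lie in $E_{\operatorname{loc}}^{*-\dagger}$ and to agree with $f$ on $\SSS^{(p!)}_{(p!)}$, the finiteness of $\|g\|_{p,\chi_1,\eta}$ for a window $\chi_1\in\SSS^{(p!)}_{(p!)}\setminus\{0\}$ is literally the hypothesis on $f$, so the quantitative transfer costs nothing. Two smaller points: your opening density-plus-closedness argument (a complete subspace with the common-window norm containing the dense copy of $\SSS^{(p!)}_{(p!)}$) is a genuinely neat alternative for $E$ TMIB with $p<\infty$ and $\mathcal C_{\eta,0}$, but the claimed reduction of ``the remaining cases'' via Theorem \ref{duali} does not cover $W(E,L^\infty_\eta)$ for a TMIB $E$ that is not itself a dual, nor $W(E,\mathcal C_{\eta,0})$ for a DTMIB $E$ — this gap is harmless only because your discrete argument supersedes it, but as stated that sentence is inaccurate. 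Finally, when you invoke Lemma \ref{est-partial-growt-fourbeuralg} for the family $\{T_{y_\lambda}\psi^{(2)}\}$, note that this family is not a UCPU (condition $(4)$ fails); the application is legitimate because the proof of that lemma uses only condition $(1)$ and the point configuration, but a careful write-up should say so, and likewise the boundedness of $\{e^{A(\tilde h|y_\lambda|)}\psi^{(2)}_\lambda\varphi\}_\lambda$ in $\SSS^*_\dagger$ (uniform over bounded sets of $\varphi$, with the Beurling/Roumieu quantifiers you indicate) should be carried out explicitly, as the paper does with the $\sigma_{(r_p)}$-seminorms in the Roumieu case.
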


\begin{proof} Without loss of generality, we can assume that $\eta$ is continuous. In view of Proposition \ref{tmib}, it suffices to show that $\widetilde{W}(E,L^{\infty}_{\eta})\subseteq E_{\operatorname{loc}}$, where $\widetilde{W}(E,L^{\infty}_{\eta})$ is the amalgam space defined via $\SSS^{(p!)}_{(p!)}(\RR^n)- \SSS'^{(p!)}_{(p!)}(\RR^n)$ and $E_{\operatorname{loc}}$ is defined via $\SSS^*_{\dagger}(\RR^n)- \SSS'^*_{\dagger}(\RR^n)$. We prove this only in the Roumieu case as the Beurling case is similar. Let $\{T_{\mathbf{n}}\psi\}_{\mathbf{n}\in\ZZ^n}$ be a UCPU of class $(p!)-(p!)$ as constructed in Remark \ref{rem-bupu-of-class-a-dss}. In view of Proposition \ref{facto-s-mul}, there are $\psi_1,\psi_2\in\SSS^{(p!)}_{(p!)}(\RR^n)$ such that $\psi=\psi_1\psi_2$. First we show that $\widetilde{W}(E,L^{\infty}_{\eta})\subseteq \SSS'^{\{M_p\}}_{\{A_p\}}(\RR^n)$. If $E$ is a TMIB, set $F:=E'$, and, when $E$ is a DTMIB, let $F$ be the TMIB so that $E=F'$. Since $\SSS^{\{M_p\}}_{\{A_p\}}(\RR^n)$ is continuously included into $F$, there are $C'>0$ and $(r'_p)\in\mathfrak{R}$ such that $\|\chi\|_F\leq C'\sigma_{(r'_p)}(\chi)$, $\chi\in\SSS^{\{M_p\}}_{\{A_p\}}(\RR^n)$. In view of \cite[Lemma 2.1]{PP3}, there are $C''>0$ and $(r''_p)\in\mathfrak{R}$ such that $1/\eta(x)\leq C''e^{A_{(r''_p)}(|x|)}$, $x\in\RR^n$. Employing \cite[Lemma 2.3]{bojan}, one can find $(r_p)\in\mathfrak{R}$ such that $r_p\leq \min\{r'_p,r''_p\}$, $p\in\ZZ_+$, and $\prod_{j=1}^{m+k}r_j\leq 2^{m+k}(\prod_{j=1}^m r_j) (\prod_{j=1}^k r_j)$, $m,k\in\ZZ_+$. Hence, the sequence $A_p\prod_{j=1}^pr_j$, $p\in\NN$, satisfies $(M.2)$ with a constant $2H$ instead of $H$. Set $\tilde{r}_p:=r_p/8H^2$, $p\in\ZZ_+$. Let $f\in \widetilde{W}(E,L^{\infty}_{\eta})$ and $\chi\in\SSS^{(p!)}_{(p!)}(\RR^n)$. Then (notice that $\sigma_{(r'_p)}\leq \sigma_{(r_p)}$)
$$
|\langle f,\chi\rangle|\leq \sum_{\mathbf{n}\in\ZZ^n} |\langle fT_{\mathbf{n}}\psi_1, \chi T_{\mathbf{n}}\psi_2\rangle|\leq \sum_{\mathbf{n}\in\ZZ^n}\|fT_{\mathbf{n}}\psi_1\|_E \|\chi T_{\mathbf{n}}\psi_2\|_F\leq C_1\sum_{\mathbf{n}\in\ZZ^n} \frac{\sigma_{(r_p)}(\chi T_{\mathbf{n}}\psi_2)}{\eta(\mathbf{n})}.
$$
We estimate as follows
\begin{align*}
&\frac{|\partial^{\alpha}(\chi(x) T_{\mathbf{n}}\psi_2(x))|e^{A_{(r_p)}(|x|)}} {\eta(\mathbf{n})M_{\alpha}\prod_{j=1}^{|\alpha|}r_j}\\
&\leq C''\sum_{\beta\leq \alpha} {\alpha\choose \beta} \frac{|\partial^{\alpha-\beta}\chi(x)||\partial^{\beta} \psi_2(x-\mathbf{n})|e^{A_{(r_p)}(|x|)} e^{A_{(r_p)}(|\mathbf{n}|)}} {M_{\alpha-\beta} (\prod_{j=1}^{|\alpha-\beta|}r_j) M_{\beta}(\prod_{j=1}^{|\beta|} r_j)}\\
&\leq C'' 8^{-|\alpha|}\sigma_{(\tilde{r}_p)}(\psi_2) \sigma_{(\tilde{r}_p)}(\chi) e^{-A_{(r_p)}(8H^2|x|)} e^{-A_{(r_p)}(8H^2|x-\mathbf{n}|)} e^{A_{(r_p)}(|x|)} e^{A_{(r_p)}(|\mathbf{n}|)}\sum_{\beta\leq \alpha} {\alpha\choose \beta}\\
&\leq C_2 \sigma_{(\tilde{r}_p)}(\psi_2) \sigma_{(\tilde{r}_p)}(\chi) e^{-A_{(r_p)}(4H|x|)} e^{-A_{(r_p)}(8H^2|x-\mathbf{n}|)} e^{A_{(r_p)}(|\mathbf{n}|)}\\
&\leq C_3 \sigma_{(\tilde{r}_p)}(\psi_2) \sigma_{(\tilde{r}_p)}(\chi) e^{-A_{(r_p)}(2H|\mathbf{n}|)} e^{A_{(r_p)}(|\mathbf{n}|)} \leq  C_4 \sigma_{(\tilde{r}_p)}(\psi_2) \sigma_{(\tilde{r}_p)}(\chi) e^{-A_{(r_p)}(|\mathbf{n}|)}.
\end{align*}
Hence $|\langle f,\chi\rangle|\leq C_5 \sigma_{(\tilde{r}_p)}(\chi)$ and, as $\SSS^{(p!)}_{(p!)}(\RR^n)$ is dense in $\SSS^{\{M_p\}}_{\{A_p\}}(\RR^n)$, we infer $f\in\SSS'^{\{M_p\}}_{\{A_p\}}(\RR^n)$.\\
\indent Now, to prove $\widetilde{W}(E,L^{\infty}_{\eta})\subseteq E_{\operatorname{loc}}$, let $f\in\widetilde{W}(E,L^{\infty}_{\eta})$ and $\chi\in\SSS^{\{M_p\}}_{\{A_p\}}(\RR^n)$. In view of the above, $f\chi= \sum_{\mathbf{n}\in\ZZ^n} f\chi T_{\mathbf{n}}\psi$ and the series is absolutely convergent in $\SSS'^{\{M_p\}}_{\{A_p\}}(\RR^n)$. As $f\chi T_{\mathbf{n}}\psi= (f T_{\mathbf{n}}\psi_1)(\chi T_{\mathbf{n}}\psi_2)\in E$, we infer
$$
\sum_{\mathbf{n}\in\ZZ^n} \|f\chi T_{\mathbf{n}}\psi\|_E\leq C_6\sum_{\mathbf{n}\in\ZZ^n} \|\chi T_{\mathbf{n}}\psi_2\|_{\mathcal{F}L^1_{\nu_E}}/\eta(\mathbf{n})<\infty,
$$
where the convergence of the very last series easily follows from Lemma \ref{lemma-for-inc-dl1-f}. Hence $f \chi\in E$ and the proof is complete.
\end{proof}

\begin{remark}\label{rem-for-amalg-spac-not-deponseq}
As a direct consequence, we deduce that the amalgam spaces do not depend on the sequences $\{M_p\}_{p\in\NN}$ and $\{A_p\}_{p\in\NN}$ in the following sense. Let $\{\widetilde{M}_p\}_{p\in\NN}$ and $\{\widetilde{A}_p\}_{p\in\NN}$ be two sequences that satisfy the same conditions as $\{M_p\}_{p\in\NN}$ and $\{A_p\}_{p\in\NN}$ respectively. We employ $\SSS'^{\widetilde{*}}_{\widetilde{\dagger}}(\RR^n)$ and $\SSS^{\widetilde{*}}_{\widetilde{\dagger}}(\RR^n)$ as a common notation for the Beurling and Roumieu variant of the space of tempered ultradistributions and the corresponding test space associated to the sequences $\{\widetilde{M}_p\}_{p\in\NN}$ and $\{\widetilde{A}_p\}_{p\in\NN}$. Let $E$ be a TMIB or a DTMIB space of class $*-\dagger$ and of class $\widetilde{*}-\widetilde{\dagger}$ simultaneously and let $\eta$ be a weight of class $\dagger$ and of class $\widetilde{\dagger}$ simultaneously. Then Proposition \ref{depe-donton-seamalgams} yields that the amalgam spaces $W(E,L^p_{\eta})$, $1\leq p\leq \infty$, and $W(E,\mathcal{C}_{\eta,0})$ are the same when defined via $\SSS^*_{\dagger}(\RR^n)-\SSS'^*_{\dagger}(\RR^n)$ or via $\SSS^{\widetilde{*}}_{\widetilde{\dagger}}(\RR^n)- \SSS'^{\widetilde{*}}_{\widetilde{\dagger}}(\RR^n)$.
\end{remark}

\begin{remark}
Let $E$ be a (D)TMIB space of distributions and let $\eta$ be a polynomially bounded weight. Then $E$ is also a (D)TMIB space of ultradistributions of class $*-\dagger$ and, similarly as above, one can show that the amalgam spaces $W(E,L^p_{\eta})$, $1\leq p\leq \infty$, and $W(E,\mathcal{C}_{\eta,0})$ when defined via $\SSS^*_{\dagger}(\RR^n)-\SSS'^*_{\dagger}(\RR^n)$ are the same with the corresponding (standard) amalgam space of distributions (cf. \cite[Section 3]{f-p-p}).
\end{remark}

Finally, we point out that when $A_p=M_p$, $p\in\NN$, most of the amalgam spaces we considered above can be viewed as generalised modulation spaces as defined in \cite[Section 4.2]{DPPV}. To make this precise, let $\eta$ be a weight of class $*$ on $\RR^n$. If $E$ is a TMIB space of class $*-*$ on $\RR^n$, then the weighted Bochner-Lebesgue space $L^p_{\eta}(\RR^n_x;\mathcal{F}E_{\xi})$, $1\leq p<\infty$, is a TMIB space of class $*-*$ on $\RR^{2n}$; see \cite[Remark 3.9]{DPPV}. Hence, one can consider the generalised modulation space $\mathcal{M}^{L^p_{\eta}(\RR^n_x;\mathcal{F}E_{\xi})}$, $1\leq p<\infty$, as defined in \cite[Section 4.2]{DPPV}. Then
\begin{equation}\label{equ-mod-amal-wihtnew}
\mathcal{M}^{L^p_{\eta}(\RR^n_x;\mathcal{F}E_{\xi})}=W(E,L^p_{\eta}),\quad 1\leq p<\infty.
\end{equation}
Employing the identity $V_{\varphi}f(x,\xi)=\mathcal{F}(fT_x\overline{\varphi})(\xi)$, $x,\xi\in\RR^n$, $f\in\SSS'^*_*(\RR^n)$, where $V_{\varphi}$ is the short-time Fourier transform with window $\varphi\in\SSS^*_*(\RR^n)\backslash\{0\}$, it is straightforward to verify that both of the spaces in \eqref{equ-mod-amal-wihtnew} induce the same topology on $\SSS^*_*(\RR^n)$. As the latter is dense in both of the spaces in \eqref{equ-mod-amal-wihtnew} (because of Proposition \ref{tmib} and \cite[Theorem 4.8 $(i)$]{DPPV}), the identity follows. If, in addition, $E'$ satisfies the Radon-Nikodym property (in particular, when $E$ is reflexive), the strong dual of $L^q_{1/\eta}(\RR^n_x;\mathcal{F}E_{\xi})$, $1\leq q<\infty$, is $L^p_{\eta}(\RR^n_x;(\mathcal{F}E')\check{}_{\xi})$, $p^{-1}+q^{-1}=1$, where $(\mathcal{F}E')\check{}$ is the DTMIB space $\{e\in\SSS'^*_*(\RR^n)\,|\, \check{e}\in \mathcal{F}E'\}$ with norm $\|e\|_{(\mathcal{F}E')\check{}}:=\|\check{e}\|_{\mathcal{F}E'}$, $e\in(\mathcal{F}E')\check{}$. Consequently, $L^p_{\eta}(\RR^n_x;(\mathcal{F}E')\check{}_{\xi})$, $1<p\leq \infty$, is a DTMIB space of class $*-*$ on $\RR^{2n}$ and \eqref{equ-mod-amal-wihtnew} together with Theorem \ref{duali} and \cite[Theorem 4.8 $(iii)$]{DPPV} give
\begin{equation}
\mathcal{M}^{L^p_{\eta}(\RR^n_x;\mathcal{F}E'_{\xi})}=W(E',L^p_{\eta}),\quad 1< p\leq \infty.
\end{equation}
\\
\\
\noindent \textbf{Acknowledgements.} We thank the anonymous reviewer whose valuable comments and suggestion have greatly improved the article.

\end{document}